\documentclass[a4paper,15pt]{article}
\usepackage[american]{babel}
\usepackage[utf8]{inputenc}
\usepackage{cite}
\usepackage{geometry}
\usepackage{amsmath, amsthm, latexsym, amssymb}
\usepackage{amsfonts}
\usepackage{booktabs}  
\usepackage{graphicx} 
\usepackage{listings}
\usepackage{hyperref}
\usepackage{anysize}
\usepackage{caption}
\usepackage{subcaption}
\usepackage{placeins}
\usepackage{mathtools}
\usepackage{todonotes}
\usepackage{tcolorbox}
\usepackage{rotating,tabularx,siunitx}
\usepackage[title]{appendix}


\def\R{\mathbb{R}}

\def\elec{\mathbf{E}}
\def\magn{\mathbf{B}}
\def\D{\mathbf{D}}
\def\F{\mathbf{F}}
\def\U{\mathbf{U}}
\def\W{\mathbf{W}}
\def\curre{\mathbf{J}}
\def\vecx{\mathbf{x}}
\def\vecv{\mathbf{v}}
\def\unitv{\mathbf{e}}
\def\normalx{\mathbf{n}_{x}}
\def\normalv{\mathbf{n}_{v}}
\def\rot{\nabla_{\vecx}\times}
\def\Th{\mathcal{T}_h}
\def\Thx{\mathcal{T}_h^x}
\def\Thv{\mathcal{T}_h^v}
\def\edgesx{\mathcal{E}_x}
\def\edgesv{\mathcal{E}_v}
\def\edges{\mathcal{E}}
\newtheorem{thm}{Theorem}
\newtheorem{lem}[thm]{Lemma}

\newcommand{\norma}[1]{\|{#1}\|}
\newcommand{\einspace}[1]{\zeta_h^{#1}}


\usepackage[wby]{callouts}

\title{Superconvergence and accuracy enhancement of discontinuous Galerkin solutions for Vlasov-Maxwell equations}
\author{
Andr\'{e}s Galindo-Olarte
\thanks{Department of Mathematics, Michigan State University,
East Lansing, MI 48824 U.S.A.
 {\tt galindoo@msu.edu}}
\and
Juntao Huang
\thanks{Department of Mathematics and Statistics, Texas Tech University, Lubbock, TX 70409 U.S.A.
{\tt juntao.huang@ttu.edu}}
\and
Jennifer Ryan
\thanks{Department of Mathematics, KTH Royal Institute of Technology, 100 44 Stockholm, Sweden.{\tt jryan@kth.se}. Research supported by the Air Force Office of Scientific Research (AFOSR), Computational Mathematics program (program manager Fariba Fahroo), under grant number FA-9550-20-1-0166.}
\and
Yingda Cheng
\thanks{Department of Mathematics, Department of  Computational Mathematics, Science and Engineering, Michigan State University,
East Lansing, MI 48824 U.S.A.
 {\tt ycheng@msu.edu}. Research is supported by NSF grant DMS-2011838.}
}

\date{}

\begin{document}
	\maketitle
	
\begin{abstract}
 This paper considers the discontinuous Galerkin (DG) methods for solving the  Vlasov-Maxwell (VM) system, a fundamental model for  collisionless magnetized plasma. The DG methods provide accurate numerical description with conservation and stability properties. However,  to resolve the high dimensional probability distribution function, the computational cost is the main bottleneck even for modern-day supercomputers. This work studies the applicability of a post-processing technique to the DG solution to enhance its accuracy and resolution for the VM system. In particular, we prove the superconvergence of   order $(2k+\frac{1}{2})$ in the negative order norm for the probability distribution function and the electromagnetic fields when piecewise polynomial degree $k$ is used. Numerical tests including Landau damping, two-stream instability and streaming Weibel instabilities are considered showing the performance of the post-processor.
\end{abstract}

	\section{Introduction}
	 	
	In this paper, we consider numerical solutions of the  Vlasov-Maxwell (VM) system, a fundamental model for  collisionless magnetized plasma. The dimensionless form of the equations that describes the evolution of a single species of non-relativistic electrons under the self-consistent electromagnetic field while the ions are treated as uniform fixed background is given by
	 	\begin{subequations}\label{eq:vm_system}
		\begin{gather}
			\partial_t f+\mathbf{v}\cdot\nabla_{\mathbf{x}} f+(\elec+\mathbf{v}\times\magn)\cdot\nabla_{\mathbf{v}} f=0,\label{eq:vm_main_eq}\\
			\frac{\partial\elec}{\partial t}=\nabla_\mathbf{x}\times\magn-\curre,\quad\frac{\partial\magn}{\partial t}=-\nabla_\mathbf{x}\times\elec,\label{eq:temp_elec_magn}\\
			\nabla_{\mathbf{x}}\cdot\elec=\rho-\rho_i,\quad\nabla_{\mathbf{x}}\cdot\magn=0, 
		\end{gather}
	\end{subequations}
	with 
	\begin{equation*}
		\rho(\mathbf{x},t)=\int_{\Omega_{v}}f(\mathbf{x},\mathbf{v},t)\,d\mathbf{v},\quad\curre(\mathbf{x},t)=\int_{\Omega_v}f(\mathbf{x},\mathbf{v},t)\mathbf{v}\,d\mathbf{v},
	\end{equation*}
	where the equations are defined on $\Omega=\Omega_x\times\Omega_v$, $\vecx\in\Omega_x$ denotes the position in physical space, and $\vecv\in\Omega_v$ in velocity space. Here $f(\vecx,\vecv,t)\geq 0$ is the distribution function of electrons at position $\vecx$ with velocity $\vecv$ at time $t$, $\elec(\vecx,t)$ is the electric field, $\magn(\vecx,t)$ in the magnetic field, $\rho(\vecx,t)$ is the electron charge density, and $\curre(\vecx,t)$ is the current density. The charge density of background ions is denoted by $\rho_i$, which is chosen to satisfy total charge neutrality, \mbox{$\int_{\Omega_x}(\rho(\vecx,t)-\rho_i)\,d\vecx=0$}. Periodic boundary conditions in $\Omega_x$ and compact support in $\Omega_v$ are assumed. 
The VM system has wide applications in plasma physics for describing space and laboratory plasmas, with application to fusion devices, high-power microwave generators, and large scale particle accelerators. 

Much work has been carried out in the literature aiming at accurate deterministic description of the probability density function for nonlinear behavior of charged particles in plasma. 
 Califano \emph{et al.}   used a semi-Lagrangian approach to compute the  streaming Weibel  instability \cite{califano1998ksw}, current filamentation instability \cite{mangeney2002nsi}, magnetic vortices \cite{califano1965ikp}, magnetic reconnection \cite{califano2001ffm}. Also, various methods have  been  proposed for the relativistic VM system \cite{Sircombe20094773, Besse20087889, Suzuki20101643, Huot2003512}. This work concerns the discontinuous Galerkin (DG) method  for solving  the VM system.
The DG method is a class of finite element method that uses discontinuous polynomial spaces, and they  have
 desirable properties for convection-dominated problems \cite{Cockburn_2001_RK_DG}.
In particular, DG methods have  been used to simulate the Vlasov-Poisson system in plasmas \cite{Heath, Heath_thesis, Cheng_Gamba_Morrison} and for a  gravitational infinite homogeneous stellar system \cite{Cheng_jeans}.  They have been also used to solve VM system \cite{cheng2014discontinuous,cheng2014energy} and the relativistic VM system \cite{yang2017discontinuous}. The DG methods have nice properties such as stability, charge and energy conservation and high order accuracy, which are highly desirable for long time simulations.

The main computational challenge for any grid based solver for the VM system  is  the high-dimensionality of the Vlasov equation. This makes the computation extremely expensive even on modern-day exa-scale supercomputers.  Post-processing techniques, which can greatly enhance the resolution of the numerical solution at any given time, are therefore desirable because it is only applied once at the end of the simulation with negligible computational cost. Post-processing for finite element methods is a mature technology. The post-processing technique presented here takes advantage of the information contained in the negative-order norm and was originally developed by Bramble and Schatz \cite{bramble1977higher} in the context of continuous finite element methods for elliptic problems. It consists of a convolution of the finite element solution with a local averaging operator. We can then establish the convergence in the negative order norm which is higher than that one obtained in the usual $L^2$-norm.  In  \cite{cockburn2003enhanced},  Cockburn, Luskin, Shu and S{\"u}li applied this technique to the DG methods for solving linear hyperbolic equations. This technique was further extended to the DG methods for solving nonlinear conservational laws \cite{ji2013negative,meng2017discontinuous} and nonlinear symmetric systems of hyperbolic conservation laws \cite{meng2018divided}.  This method is currently part of a filtering family known as a Smoothness-Increasing Accuracy-Conserving (SIAC) filters \cite{steffan2008investigation}. This paper will demonstrate the performance of post-processing by the SIAC filter for DG solutions to the VM system. In particular, we consider benchmark numerical tests for Vlasov-Amp\'{e}re (VA) and VM systems, and study the numerical error for short and long time simulations with varying polynomial order.
 
 
In order to validate the enhanced accuracy of the post-processed solution, an important step is to establish the superconvergence of the negative order norm of the error and its divided differences.  In \cite{cockburn2003enhanced},  Cockburn, Luskin, Shu and S{\"u}li established a framework to prove negative-order estimates for the DG solutions to linear conservational laws of order $2k+1$ using polynomials of degree $k.$ After this, there have been important extensions. $L^2$ and $L^{\infty}$ superconvergence estimates were established for DG solutions for linear constant coefficient hyperbolic systems    with the position-dependent SIAC filter \cite{ji2014superconvergent}. Ji, Meng \emph{et al} \cite{ji2013negative, meng2017discontinuous,meng2018divided} proved superconvergence for non-linear conservation laws  and  nonlinear symmetric hyperbolic systems  of the DG solutions of order at least $(\frac{3}{2}k+1)$.  It is highly nontrivial to establish superconvergence for nonlinear problems because a suitable dual problem has to be identified, and additionally the divided difference of the solution does not satisfy the PDE, which makes the proof highly technical \cite{meng2017discontinuous,meng2018divided}. In this work, we aim to prove negative-order estimates   of DG solutions to the VM system. Since the VM system is nonlinear, it is nontrivial to extend the proof in  \cite{cockburn2003enhanced}. We identify a proper dual problem, which aids the estimates of the consistency term. In the end, we proved superconvergence of order $(2k+\frac{1}{2})$ in the negative norm for the probability distribution function and the electromagnetic fields.
     
	
The paper is organized as follows. In Section \ref{sec:dg_num_sche}, we introduce the DG method for the VM system as well as relevant notations that will be required for the negative order estimates. In Section \ref{sec:siac_filter} we introduce SIAC filtering. In Section \ref{sec:sc_dg_method} we prove the negative-order norm estimates of the DG solutions to the VM system. The superconvergence results are confirmed numerically in Section \ref{sec:num_experiments}.   We conclude the paper with remarks and future work in Section \ref{sec:conclude}. 

	\section{Discontinuous Galerkin Numerical Scheme}\label{sec:dg_num_sche}
	\subsection{Notations, Definitions and Projections}
		We begin by introducing the necessary notation used in the paper. 
		Without loss of generality, we assume the spatial and velocity domain to be $\Omega_x=[-L_x,L_x]^{d_x}$ and $\Omega_{v}=[-L_v,L_v]^{d_v}$, where  $L_v$ is chosen large enough so that $f=0$ at  $\partial \Omega_v.$ 
	Through out the paper, standard notations will be used for the Sobolev spaces. Given a bounded domain $D\in\R^{\star}$ (with $\star=d_x$,$d_v$, or $d_x+d_v$) and any nonnegative integer $m$, $H^m(D)$ denotes the $L^2$-Sobolev space of order $m$ with the standard Sobolev norm $\norma{\cdot}_{m,D}$, $W^{m,\infty}$ denotes  the $L^{\infty}$-Sobolev space of order $m$ with the standard Sobolev norm $\norma{\cdot}_{m,\infty, D}$ and the semi-norm $|\cdot|_{m,\infty,D}$. When $m=0$, we also use $H^{0}(D)=L^2(D)$ and $W^{0,\infty}(D)=L^{\infty}(D)$.
	
		Let $\Thx=\{K_x\}$ and $\Thv=\{K_v\}$ be partitions of $\Omega_x$ and $\Omega_v$, respectively, with $K_x$ and $K_v$ being  Cartesian elements or simplices; then $\Th=\{K:K=K_x\times K_v,\,\forall K_x\in\Thx,\,\forall K_v\in\Thv\}$ defines a partition of $\Omega$. Let $\edgesx$ be the set of the edges of $\Thx$ and $\edgesv$ the set of the edges of $\Thv$; then the edges of $\Th$ will be $\edges=\{K_x\times e_v:\forall K_x \in\Thx,\,\forall e_v\in\edgesv\}\cup\{e_x\times K_v:\,\forall e_x\in\edgesx,\forall K_x \in\Thx\}.$  
Furthermore, $\edgesv=\edgesv^i\cup\edgesv^b$ with $\edgesv^i$ and $\edgesv^b$ being the set of interior and boundary edges of $\Thv$ respectively. In addition, we denote the mesh size of $\Th$ as $h=\max(h_x,h_v)=\max_{K\in\Th}h_K$, where $h_x=\max_{K_x\in\Thx}h_{K_x}$ with $h_{K_x}=\mathrm{diam}(K_x)$, $h_v=\max_{K_v\in\Thv}h_{K_v}$ with $h_{K_v}=\mathrm{diam}(K_v)$, and $h_K=\max(h_{K_x},h_{K_v})$ for $K=K_x\times K_v$.  When the mesh is refined, we assume both  $\frac{h_x}{h_{x,\min}}$ and $\frac{h_v}{h_{v,\min}}$ are uniformly bounded from above by a positive constant $\sigma_0$. Here $h_{x,\min}=\min_{K_x}h_{K_x\in\Thx}$ and $h_{v,\min}=\min_{K_v\in\Thv}h_{K_v}$. It is further assumed that $\left\{\mathcal{T}_h^{\star}\right\}_h$ is shape-regular with $\star=x$ or $v$. That is, if $\rho_{K_{\star}}$ denotes the diameter of the largest sphere included in $K_{\star}$,  there is 
	\begin{equation*}
		\frac{h_{K_{\star}}}{\rho_{K_{\star}}}\leq \sigma_{\star},\quad\forall K_{\star}\in\mathcal{T}_h^{\star} 
	\end{equation*}
for a positive constant $\sigma_{\star}$ independent of $h_{\star}$. 
	Furthermore the inner products are defined as 
	\begin{gather}
		(g,h)_{\Omega}=\int_{\Omega}gh\,dx\,dv=\sum_{K\in\Th}\int_{K}gh\,dx\,dv,\\
		(\U,\mathbf{W})_{\Omega_x}=\int_{\Omega_x}\U\cdot\mathbf{W}\,dx=\sum_{K_x\in\Thx}\int_{K_x}\U\cdot\mathbf{W}\,dx. 
	\end{gather}
	
	Now for $g\in L^2(\Omega)$, $\U,\W\in (L^2(\Omega_x))^{d_x}$, we define the $L^2$-norm of $(g,\U,\W)$ as
	\begin{equation}
	\norma{(g,\U,\W)}_{0,\Omega}=\sqrt{\norma{g}_{0,\Omega}^2+\norma{\U}_{0,\Omega_x}^2+\norma{\W}_{0,\Omega_x}^2}
	\end{equation}
	This will be helpful in the error analysis of the negative-order norm. The negative order norm is defined as: given $l>0$ and domain $\Omega$, 
    \begin{equation*}
    	\norma{(g,\U,\mathbf{W})}_{-l,\Omega}=\sup_{\phi\in C_0^{\infty}(\Omega),\mathcal{U},\mathcal{W}\in[C_0^{\infty}(\Omega_x)]^{d_x}}\frac{(g,\phi)_{\Omega}+(\U,\mathcal{U})_{\Omega_x}+(\mathbf{W},\mathcal{W})_{\Omega_x}}{\sqrt{\norma{\phi}_{l,\Omega}^2+\norma{\mathcal{U}}_{l,\Omega_x}^2+\norma{\mathcal{W}}_{l,\Omega_x}^2}}
    \end{equation*}

    Next we define the discrete spaces 
    \begin{align}
    	\mathcal{G}_h^k&=\left\{g\in L^2(\Omega):\left.g\right|_{K=K_x\times K_v}\in P^k(K_x\times K_v),\forall K_x\in\Thx,\forall K_x\in\Thx,\forall K_v\in\Thv,\right\}\label{eq:f_space}\\
    	               &=\left\{g\in L^2(\Omega):\left.g\right|_{K}\in P^k(K),\forall K\in\Th\right\},\nonumber\\
    	\mathcal{U}_h^r&=\left\{\U\in\left[L^2(\Omega_x)\right]^{d_x}:\left.\U\right|_{K_x}\in\left[P^r(K_x)\right]^{d_x},\forall K_x\in\Thx\right\}\label{eq:fields_space},
    \end{align}
	where $P^r(D)$ denotes the set of polynomials of total degree at most $r$ on $D$, and $k$ and $r$ are nonnegative integers. 
	
	For piecewise functions defined with respect to $\Thx$ or $\Thv$, we further introduce the jumps and averages as follows. For any edge $e=\left\{K_x^+\cap K_x^{-}\right\}\in\edgesx$, with $\normalx^{\pm}$ as the outward unit normal to $\partial K_{x}^{\pm}$, $g^{\pm}=\left.g\right|_{K_x^{\pm}}$ and $\U^{\pm}=\left.\U\right|_{K_x^{\pm}}$, the jump across $e$ are defined as 
	\begin{equation*}
		[g]_x=g^{+}\normalx^+ +g^{-}\normalx^{-},\quad[\U]_x=\U^{+}\cdot\normalx^+ +\U^{-}\cdot\normalx^{-},\quad[\U]_{\tau}=\U^{+}\times\normalx^+ +\U^{-}\times\normalx^{-}
	\end{equation*}
and the averages are 
 	\begin{equation*}
 		\{g\}_x=\frac{1}{2}(g^++g^-),\quad\{\U\}_x=\frac{1}{2}(\U^+ + \U^-).
 	\end{equation*}
	By replacing the subscript $x$ with $v$, one can define $[g]_v,\,[\U]_v,\,\{g\}_v$, and $\{\U\}_v$ for an interior edge of $\Thv$ in $\edgesv^i$. For a boundary edge $e\in\edgesv^b$ with $\normalv$ being the outward unit normal we use 
	\begin{equation}
		[g]_v=g\normalv,\quad\{g\}_v=\frac{1}{2}g,\quad\{\U\}_v=\frac{1}{2}\U.
		\label{eq:bound_aver_jump}
	\end{equation}
	This is consistent with the fact that the exact solution $f$ is compactly supported in $\vecv$. 
	
	For convenience, we introduce some shorthand notations, $\int_{\Omega_{\star}}=\int_{\Th^{\star}}=\sum_{K_{\star}\in\Th^{\star}}\int_{K_{\star}}$, $\int_{\Omega}=\int_{\Th}=\sum_{K\in\Th}\int_{K}$, $\int_{\edges_{\star}}=\sum_{e\in\edges_{\star}}\int_{e}$, where again $\star$ is $x$ or $v$. In addition, $\norma{g}_{0,\edges}=(\norma{g}_{0,\edgesx\times\Thv}^2+\norma{g}_{0,\Thx\times\edgesv}^2)^{1/2}$ with $\norma{g}_{0,\edgesx\times\Thv}=\left(\int_{\edgesx}\int_{\Thv}g^2\,d\vecv\,ds_{\vecx}\right)^{1/2}$, $\norma{g}_{0,\Thx\times\edgesv}=\left(\int_{\Thx}\int_{\edgesv}g^2\,ds_{\vecv}\,d\vecx\right)^{1/2}$.
	We will make use of the following equality, which can be easily verified using the definition of averages and jumps.
	\begin{gather}
		\frac{1}{2}[g^2]_{\star}={g}_{\star}[g]_{\star},\text{with }\star=x \text{ or }v.
	\end{gather}

\subsection{The DG method for the Vlasov-Maxwell system}\label{sec:dg_method}
	Now we review the DG method for the VM system proposed in \cite{cheng2014discontinuous}.
The scheme seeks a numerical solution $f_h\in\mathcal{G}^k_h$ and $(\elec_h,\magn_h)\in\mathcal{U}^k_{h}\times \mathcal{U}^k_{h}$ such that for any $g\in\mathcal{G}_h^k$, $\mathbf{U},\mathbf{W}\in\mathcal{U}^k_{h}$, 
	\begin{subequations}\label{eq:dg_system}
		\begin{align}		
			\int_K \partial_t f_hg\,d\vecx d\vecv&-\int_Kf_h\vecv\cdot\nabla_x g\,d\vecx d\vecv-\int_K f_h(\elec_h+\vecv\times\magn_h)\cdot\nabla_{\mathbf{v}}g\,d\vecx d\vecv\nonumber\\
			&+\int_{K_v}\int_{\partial K_x}\widehat{f_h\vecv\cdot\normalx} g\,ds_{\vecx} d\vecv+\int_{K_x}\int_{\partial K_v} \widehat{(f_h(\elec_h+\vecv\times\magn_h)\cdot\normalv)}g\,ds_{\vecv}d\vecx=0,\label{eq:dg_distribution}\\
			\int_{K_x}\partial_t\elec_h\cdot \mathbf{U}\,d\vecx&=\int_{K_x}\magn_h\cdot\rot\mathbf{U}\,d\vecx+\int_{\partial K_x}\widehat{\normalx\times\magn_h}\cdot\mathbf{U} \,ds_{\vecx}-\int_{K_x}\curre_h\cdot\mathbf{U}\,d\vecx,\label{eq:dg_electric}\\
			\int_{K_x}\partial_t\magn_h\cdot \mathbf{W}\,d\vecx&=-\int_{K_x}\elec_h\cdot\rot\mathbf{W}\,d\vecx-\int_{\partial K_x}\widehat{\normalx\times\elec_h}\cdot\mathbf{W}\,ds_{\vecx}\label{eq:dg_magnetic}
		\end{align}
	\end{subequations}
	with 
	\begin{equation}
		\curre_h(\vecx,t)=\int_{\mathcal{T}^{\vecv}_{h}}f_h(\vecx,\vecv,t)\vecv\,d\vecv.
		\label{eq:dg_current}
	\end{equation}
	Here $\normalx$ and $\normalv$ are outward unit normals of $\partial K_x$ and $\partial K_v$, respectively. All ``hat'' functions are numerical fluxes that are determined by upwinding, i.e., 
	\begin{subequations}
		\begin{align}
			\widehat{f_h\vecv\cdot\normalx}&\coloneqq\widetilde{f_h\vecv}\cdot\normalx=\left(\{f_h\vecv\}_{x}+\frac{|\vecv\cdot\normalx|}{2}[f_h]_x\right)\cdot\normalx\label{eq:flux_f_h}\\
			\widehat{f_h(\elec_h+\vecv\times\magn_h)\cdot\normalv}&\coloneqq f_h\widetilde{(\elec_h+\vecv\times \magn_h)}\cdot\normalv\nonumber\\
			&=\left(\{f_h(\elec_h+\vecv\times \magn_h)\}_{v}+\frac{|(\elec_h+\vecv\times \magn_h)\cdot\normalv|}{2}[f_h]_v\right)\cdot\normalv,\label{eq:flux_fEBh}\\
			\widehat{\normalx\times\elec_h}&\coloneqq \normalx\times\widetilde{\elec_h}=\normalx\times\left(\{\elec_h\}_x+\frac{1}{2}\left[\magn_h\right]_{\tau}\right)\label{eq:flux_Eh}\\
			\widehat{\normalx\times\magn_h}&\coloneqq \normalx\times\widetilde{\magn_h}=\normalx\times\left(\{\magn_h\}_x-\frac{1}{2}\left[\elec_h\right]_{\tau}\right)\label{eq:flux_Bh}
		\end{align}
	\end{subequations}
	where these relations define the meaning of ``tilde''. In \cite{cheng2014discontinuous}, alternating and central fluxes for the Maxwell's equation are also considered. The discussions will be similar to what will be presented in the paper for the upwind flux, and thus are omitted.
	
	Upon summing up \eqref{eq:dg_distribution} with respect to $K\in\mathcal{T}_h$ and similarly summing \eqref{eq:dg_electric} and \eqref{eq:dg_magnetic} with respect to $K_x\in\mathcal{T}_h^x$, the scheme \eqref{eq:dg_system} becomes the following: look for $f_h\in\mathcal{G}_h^k,\,\elec_h,\magn_h\in\mathcal{U}_h^k$, such that
	\begin{subequations}
		\begin{gather}
			((f_h)_t,g)_{\Omega}+a_h(f_h,\elec_h,\magn_h;g)=0\label{eq:scheme_f}\\
			((\elec_h)_t,\mathbf{U})_{\Omega_x}+((\magn_h)_t,\mathbf{W})_{\Omega_x}+b_h(\elec_h,\magn_h;\mathbf{U},\mathbf{W})=l_h(\curre_h;\mathbf{U}),\label{eq:scheme_fields}
		\end{gather}
	\end{subequations}
	for any $g\in\mathcal{G}^k_h,\,\mathbf{U},\mathbf{W}\in\mathcal{U}_{h}^k$, where
	\begin{align*}
		a_h(f_h,\elec_h,\magn_h;g)&=a_{h,1}(f_h;g)+a_{h,2}(f_h,\elec_h,\magn_h;g),\quad l_h(\curre_h;\mathbf{U})=-\int_{\mathcal{T}_h^x}\curre_h\cdot\mathbf{U}\,d\vecx,\\
		b_h(\elec_h,\magn_h;\mathbf{U},\mathbf{W})&=-\int_{\Thx}\magn_h\cdot\rot\mathbf{U}\,d\vecx-\int_{\edgesx}\widetilde{\magn_h}\cdot\left[\mathbf{U}\right]_{\tau}\,ds_x\\
		&+\int_{\Thx}\elec_h\cdot\rot\mathbf{W}\,d\vecx+\int_{\edgesx}\widetilde{\elec_h}\cdot\left[\mathbf{W}\right]_{\tau}\,ds_x,
	\end{align*}
	and 
	\begin{align*}
		a_{h,1}(f_h;g)&=-\int_{\Th}f_h\vecv\cdot\nabla_{\vecx}g\,d\vecx d\vecv+\int_{\Thv}\int_{\edgesx}\widetilde{f_h\vecv}\cdot\left[g\right]_x\,ds_x d\vecv\\
		a_{h,2}(f_h,\elec_h,\magn_h;g)&=-\int_{\Th}f_h\left(\elec_h+\vecv\times\magn_h\right)\cdot\nabla_{\vecv}g\,d\vecx d\vecv+\int_{\Thx}\int_{\edgesv}\widetilde{f_h(\elec_h+\vecv\times\magn_h)}\cdot[g]_{v}\,ds_{v}d\vecx
	\end{align*}
	
	The semi-discrete formulation \eqref{eq:dg_system} can then be solved by a numerical ODE solver, see the description in \cite{cheng2014discontinuous}. The $L^2$ and energy stability of \eqref{eq:dg_system} are established in \cite{cheng2014discontinuous}.
	The main result in \cite{cheng2014discontinuous} for the semi-discrete $L^2$ error estimates of the approximations $f_h$, $\elec_h$, $\magn_h$, is as follows.
	\begin{thm}[\cite{cheng2014discontinuous}]\label{thm:main_approx_result}
		For $k\geq 2$ when $d_x=3$ and $k\geq 1$ when $d_x=1, 2$, the semi-discrete DG method of \eqref{eq:scheme_f}-\eqref{eq:scheme_fields}, for the Vlasov-Maxwell equations with the upwind fluxes of \eqref{eq:flux_f_h}-\eqref{eq:flux_Bh}, has the following error estimate
		\begin{equation}
		\norma{(f-f_h)(t)}_{0,\Omega}^2+\norma{(\elec-\elec_h)(t)}_{0,\Omega_x}^2+\norma{(\magn-\magn_h)(t)}_{0,\Omega_x}^2\leq C h^{2k+1},\quad \forall t\in[0,T].
		\label{eq:dg_error_estimate}
		\end{equation}
	    Here the constant $C$ is independent of $h,$ but depends on the upper bounds of $\norma{\partial_t f}_{k+1,\Omega}$,$\norma{f}_{k+1,\Omega}$, $\left|f\right|_{1,\infty,\Omega}$, $\norma{\elec}_{1,\infty,\Omega_x}$, $\norma{\magn}_{1,\infty,\Omega_x}$, $\norma{\elec}_{k+1,\Omega_x}$, $\norma{\magn}_{k+1,\Omega_x}$ over the time interval $[0,T]$, and it also depends on the polynomial degree $k$, mesh parameters $\sigma_0$, $\sigma_x$ and $\sigma_v$, and domain parameters $L_x$ and $L_v$.
	\end{thm}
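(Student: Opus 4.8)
The plan is to prove this by the standard discontinuous Galerkin energy argument, with the nonlinearity and the Vlasov--Maxwell coupling controlled through a bootstrap (``a~priori assumption''). First I would introduce the $L^2$-projections $\Ph$ onto $\mathcal{G}_h^k$ and $\Pi_h$ onto $\mathcal{U}_h^k$, for which the classical approximation estimates $\norma{f-\Ph f}_{0,\Omega}\le Ch^{k+1}$, $\norma{f-\Ph f}_{0,\edges}\le Ch^{k+1/2}$, and the analogues for $\elec-\Pi_h\elec$, $\magn-\Pi_h\magn$ hold. Then I would split the errors as $f-f_h=(f-\Ph f)+(\Ph f-f_h)=:\eta+\xi$, $\elec-\elec_h=\eta_E+\xi_E$, $\magn-\magn_h=\eta_B+\xi_B$ with $\xi\in\mathcal{G}_h^k$, $\xi_E,\xi_B\in\mathcal{U}_h^k$, and take $f_h(0)=\Ph f(0)$, $\elec_h(0)=\Pi_h\elec(0)$, $\magn_h(0)=\Pi_h\magn(0)$ so that $\xi(0)=\xi_E(0)=\xi_B(0)=0$.

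The exact solution of \eqref{eq:vm_system} is smooth and satisfies \eqref{eq:scheme_f}--\eqref{eq:scheme_fields} exactly (the scheme is consistent, using that $\elec+\vecv\times\magn$ is divergence-free in $\vecv$). Subtracting the discrete equations yields an error equation for $(\xi,\xi_E,\xi_B)$ whose right-hand side is driven by the projection errors $(\eta,\eta_E,\eta_B)$, by the flux residuals, and by the nonlinear mismatch between $f(\elec+\vecv\times\magn)\cdot\nabla_\vecv f$ and $f_h(\elec_h+\vecv\times\magn_h)\cdot\nabla_\vecv f_h$ together with the current coupling $\curre-\curre_h=\int_{\Thv}(f-f_h)\vecv\,d\vecv$. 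I would then choose $g=\xi$, $\U=\xi_E$, $\W=\xi_B$ and add the two identities. The upwind fluxes \eqref{eq:flux_f_h}--\eqref{eq:flux_Bh} make the diagonal parts of $a_h$ and $b_h$ nonnegative, producing the dissipation $\tfrac12\int_{\edgesx\times\Thv}|\vecv\cdot\normalx|[\xi]_x^2+\tfrac12\int_{\Thx\times\edgesv}|(\elec_h+\vecv\times\magn_h)\cdot\normalv|[\xi]_v^2+\tfrac12\int_{\edgesx}\big([\xi_E]_\tau^2+[\xi_B]_\tau^2\big)\ge0$; the key observation is that the interface residuals generated by $\eta$ carry exactly these weights, so they are absorbed into the dissipation after Young's inequality at the cost of $Ch^{2k+1}$. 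The $\eta$ volume terms are dealt with by orthogonality: $\int_\Th\eta\,\vecv\cdot\nabla_\vecx\xi=0$ since $\vecv\cdot\nabla_\vecx\xi\in P^k(K)$ is $L^2$-orthogonal to $\eta$, and $\int_\Th\eta\,(\elec+\vecv\times\magn)\cdot\nabla_\vecv\xi$ is $O(h^{k+1})\norma{\xi}_{0,\Omega}$ after freezing the smooth coefficient to a cellwise constant and using the same orthogonality together with an inverse inequality. The constant-coefficient transport terms are treated as in the linear theory of \cite{cockburn2003enhanced}.

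The remaining, genuinely nonlinear, terms are the crux. I would expand the mismatch into pieces that are: (i)~linear in $(\xi,\xi_E,\xi_B)$ with coefficients controlled by the smooth exact solution, handled by Cauchy--Schwarz and Young's inequality; (ii)~linear in the projection errors $(\eta,\eta_E,\eta_B)$, handled by the approximation bounds and $\norma{\curre-\curre_h}_{0,\Omega_x}\le C(\norma{\eta}_{0,\Omega}+\norma{\xi}_{0,\Omega})$; and (iii)~quadratic pieces such as $\xi\,(\xi_E+\vecv\times\xi_B)\cdot\nabla_\vecv\xi$, which after integration by parts in $\vecv$ — using $\nabla_\vecv\cdot\xi_E=\nabla_\vecv\cdot(\vecv\times\xi_B)=0$, so only interface values of $\xi$ appear — are bounded by $\norma{(\xi_E,\xi_B)}_{0,\infty,\Omega_x}$ times $\norma{\xi}$-quantities. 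The factor $\norma{(\xi_E,\xi_B)}_{0,\infty,\Omega_x}$ (and the corresponding $L^\infty$ norms in the other nonlinear terms) is made harmless by the a~priori assumption $\norma{(f-f_h,\elec-\elec_h,\magn-\magn_h)}_{0,\infty}\le h$ on $[0,T]$: combining it with the inverse inequality $\norma{\cdot}_{0,\infty,\Omega_x}\le Ch^{-d_x/2}\norma{\cdot}_{0,\Omega_x}$ and the $L^2$-estimate being proved, the assumption closes precisely when $h^{\,k+1/2-d_x/2}\to0$, i.e.\ for $k\ge1$ if $d_x=1,2$ and $k\ge2$ if $d_x=3$ — which is exactly the hypothesis on $k$ in the statement.

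Collecting all contributions gives $\tfrac{d}{dt}\big(\norma{\xi}_{0,\Omega}^2+\norma{\xi_E}_{0,\Omega_x}^2+\norma{\xi_B}_{0,\Omega_x}^2\big)\le C\big(\norma{\xi}_{0,\Omega}^2+\norma{\xi_E}_{0,\Omega_x}^2+\norma{\xi_B}_{0,\Omega_x}^2\big)+Ch^{2k+1}$, with $C$ depending on the Sobolev norms of the exact solution named in the statement. Since $\xi,\xi_E,\xi_B$ vanish at $t=0$, Gronwall's lemma yields $\norma{\xi}_{0,\Omega}^2+\norma{\xi_E}_{0,\Omega_x}^2+\norma{\xi_B}_{0,\Omega_x}^2\le Ch^{2k+1}$ on $[0,T]$, and adding the projection-error bounds gives \eqref{eq:dg_error_estimate}; a standard continuation argument (the a~priori assumption holds at $t=0$, and once the estimate is proved under it, it holds with a factor $Ch^{k+1/2-d_x/2}\ll1$ to spare) removes the assumption. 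I expect the main obstacle to be step~(iii) together with the current/charge coupling: identifying a test structure in which the quadratic error terms can be absorbed into the dissipation and the Gronwall term without losing half a power of $h$, which is precisely what forces the dimension-dependent restriction on $k$.
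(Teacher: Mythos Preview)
The present paper does not prove this theorem: it is quoted from \cite{cheng2014discontinuous} and used only as a black box (in the closing of Lemmas~\ref{lem:residual} and~\ref{lem:consistency} and at the end of the proof of Theorem~\ref{thm:zero_divided_difference}). So there is no in-paper proof to compare against. That said, your outline is precisely the energy argument carried out in the cited reference: split the error through the $L^2$ projections, use consistency of the scheme, test with $(\xi,\xi_E,\xi_B)$, let the upwind dissipation absorb the interface residuals at the cost of $Ch^{2k+1}$, handle the genuine nonlinearity through an a~priori $L^\infty$ bound on the discrete fields, and close the bootstrap via the inverse inequality $\norma{\cdot}_{0,\infty,\Omega_x}\le Ch^{-d_x/2}\norma{\cdot}_{0,\Omega_x}$ --- which is exactly what forces $k\ge1$ for $d_x=1,2$ and $k\ge2$ for $d_x=3$. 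Your identification of step~(iii) and the current coupling as the delicate part is accurate.

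One small slip that does not affect the strategy: in your description of the nonlinear mismatch you wrote ``$f(\elec+\vecv\times\magn)\cdot\nabla_\vecv f$'' where the Vlasov term is $(\elec+\vecv\times\magn)\cdot\nabla_\vecv f$, with no leading factor of $f$.
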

In this work, we also consider \eqref{eq:vm_system} when there is no magnetic field (i.e. when $\magn=0$). This reduced problem is called the VA system, and the DG discretizations would follow a similar discussion by setting $\magn_h=0$ in \eqref{eq:dg_system} at all times.

	\section{Smoothness-Increasing Accuracy-Conserving Filters}\label{sec:siac_filter}
	We extract the higher-order accuracy of the DG method solved over a uniform mesh contained in the negative-order norm by using the SIAC filter. This technique could also be applied over nonuniform meshes, however this would force us to compute the post-processing coefficients in each element in the mesh, increasing the computational complexity of the implementation \cite{curtis2008postprocessing}. This filter improves the order of accuracy by   reducing the spurious oscillations in the error. This is done by convolving the numerical approximation with a specially chosen kernel, 
	\begin{gather}
	    (f^*_h(\vecx,\vecv),\elec_h^*(\vecx),\magn_h^*(\vecx))=K_h^{2(k+1),k+1}\star (f_h, \elec_h,\magn_h)(\vecx,\vecv),
	    \label{eq:kernel_conv}
	\end{gather}
	where $(f_h^*,\elec_h^*,\magn_h^*)$ is the filtered solution, $(f_h,\elec_h,\magn_h)$ is an approximated solution computed at the final time, and $K_h^{2(k+1),k+1}$ is the convolution kernel. The kernel is translation-invariant and composed of a linear combination of B-splines of order $k+1$ obtained by convolving the characteristic function over the interval $(-\frac{1}{2},\frac{1}{2})$ with itself $k$ times and scaled by the uniform mesh size. Using B-splines makes this kernel computationally efficient, provided the mesh is uniform, as the kernel is translation invariant and is locally supported in at most $2k+2$ elements. The one-dimensional convolution kernel is of the  form:
	\begin{gather}
	    K_h^{2(k+1),k+1}(x)=\frac{1}{h}\sum_{\gamma=-k}^{k}c_{\gamma}^{2(k+1),k+1}\psi^{(k+1)}\left(\frac{x}{h}-\gamma\right).
	\end{gather}
	The weights of the B-splines, $c_{\gamma}^{2(k+1),k+1}$, are chosen so that accuracy is not destroyed (the kernel can reproduce polynomials of degree up to $2k$), i.e. $K_h^{2(k+1),k+1}\star p=p$ for $p=1,\,x,\,\cdots,x^{2k},$ see \cite{cockburn2003enhanced} for details. 
	
	For the general case, assume the mesh size is uniform in each direction, given arbitrary $(\vecx,\vecv)=(x_1,\cdots,x_{d_x},v_1,\cdots,v_{d_v})\in \R^{d_x+d_v}$, we set \begin{gather}
	    \psi^{(k+1)}(\vecx,\vecv)=\prod_{i=1}^{d_x}\psi^{(k+1)}(x_i)\prod_{j=1}^{d_v}\psi^{(k+1)}(v_j)
	\end{gather}
	The kernel for our case is of the form
	\begin{gather}
	    K_h^{2(k+1),k+1}(\vecx,\vecv)=\frac{1}{\left(\prod_{i=1}^{d_x}h_{x_i}\right)\left(\prod_{j=1}^{d_v}h_{v_j}\right)}\sum_{\gamma\in{\{-k,\ldots,k\}}^{d_x+d_v}}\mathbf{c}_{\gamma}^{2(k+1),k+1}\psi^{(k+1)}\left(\left(\frac{x_1}{h_{x_1}},\cdots,\frac{x_{d_x}}{h_{d_x}},\frac{v_1}{h_{v_1}},\cdots,\frac{v_{d_v}}{h_{d_v}}\right)-\gamma\right)
	\end{gather}
	where $h_{x_i}$ and $h_{v_i}$ denote the mesh size in $x_i$ and $v_i$ direction, resp. The success of the filter relies on the following results. 
	\begin{thm}(Bramble and Schatz \cite{bramble1977higher})\label{thmbs} For $T>0$, let $u=(f, \elec, \magn)$ be the exact solution of the problem \eqref{eq:vm_system}. Let $\Omega_0+2supp(K_h^{2(k+1),k+1}(\vecx,\vecv))\subset\subset \Omega$ and $U=(f_h, \elec_h, \magn_h)$ is any approximation to $u$, then 
	\begin{gather}
	\label{eqbs}
	    \norma{u(T)-K_h^{2(k+1),k+1}\star U}_{0,\Omega_0}\leq \frac{h^{2k+2}}{(2k+2)!}|u|_{2k+2,\Omega}+C_{\rm P}\sum_{|\lambda|\leq k+1}\norma{\partial_h^{\lambda}(u-U)}_{-(k+1),\Omega}.
	\end{gather}
	where $C_{\rm P}$ depends solely on $\Omega_0,\,\Omega,\,d_x,\,d_v,\,k, c_{\gamma}^{2(k+1),k+1}$, and it is independent of $h$. 
	\end{thm}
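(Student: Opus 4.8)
The plan is to follow the classical Bramble--Schatz / Cockburn--Luskin--Shu--S\"uli argument, adapted to the tensor structure of the Vlasov phase space. I would start from the splitting
\begin{equation*}
u(T)-K_h^{2(k+1),k+1}\star U=\bigl(u(T)-K_h^{2(k+1),k+1}\star u(T)\bigr)+K_h^{2(k+1),k+1}\star\bigl(u(T)-U\bigr),
\end{equation*}
and bound the two terms separately in $L^2(\Omega_0)$. Throughout, the hypothesis $\Omega_0+2\,\mathrm{supp}(K_h^{2(k+1),k+1})\subset\subset\Omega$ is what guarantees that both convolutions depend only on data in $\Omega$ and that the discrete integrations by parts below produce no boundary contributions.

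For the first (kernel consistency) term I would use only the polynomial-reproduction property of the kernel: because the weights $c_\gamma^{2(k+1),k+1}$ are chosen so that $K_h^{2(k+1),k+1}\star p=p$ for every polynomial $p$ of degree at most $2k$ in each variable, expanding $u$ in a Taylor polynomial with integral remainder and using the product form of the kernel (the $(d_x+d_v)$-dimensional kernel on the $f$-component and the $d_x$-dimensional one on the field components) together with $\mathrm{diam}(\mathrm{supp}\,K_h^{2(k+1),k+1})=\mathcal O(h)$, one obtains the bound $\tfrac{h^{2k+2}}{(2k+2)!}|u|_{2k+2,\Omega}$. This part is routine.

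The substantive step is the filtered error $\norma{K_h^{2(k+1),k+1}\star(u-U)}_{0,\Omega_0}$. I would write it as a supremum over smooth, compactly supported test triples normalized in $L^2(\Omega_0)$, transfer the convolution onto the test triple using that the kernel is even in each argument (here the support hypothesis is used), and then invoke the key algebraic identity — going back to the recurrence relating B-splines of consecutive orders to central divided differences — that expresses convolution with $K_h^{2(k+1),k+1}$ as a finite sum of terms $\partial_h^{\lambda}(\Theta_h^{\gamma}\star\,\cdot\,)$ with multi-indices $|\lambda|\le k+1$ and $\Theta_h^{\gamma}$ scaled, shifted, compactly supported B-splines of order at least $k+2$, hence lying in $H^{k+1}$. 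A discrete summation by parts moves the divided differences onto $u-U$; the definition of the negative-order norm bounds each resulting term by $\norma{\partial_h^{\lambda}(u-U)}_{-(k+1),\Omega}\,\norma{\Theta_h^{\gamma}\star\phi}_{k+1,\Omega}$, and $\norma{\Theta_h^{\gamma}\star\phi}_{k+1,\Omega}\le C\norma{\phi}_{0,\Omega}$ by Young's inequality (convolution against a fixed compactly supported $H^{k+1}\subset W^{k+1,1}$ function is bounded $L^2\to H^{k+1}$), the constant being $h$-independent once the $h^{-(k+1)}$ generated by the divided differences is matched against the $h$-powers from differentiating the scaled B-splines. Summing over the finitely many indices $\gamma$ and $\lambda$ and collecting constants yields the term $C_{\rm P}\sum_{|\lambda|\le k+1}\norma{\partial_h^{\lambda}(u-U)}_{-(k+1),\Omega}$ with $C_{\rm P}$ depending only on $\Omega_0,\Omega,d_x,d_v,k$ and the weights. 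The main obstacle is precisely this divided-difference representation of the B-spline kernel and the accompanying bookkeeping of powers of $h$ and of the multi-indices over the $d_x+d_v$ phase-space coordinates; once it is in place the rest is the definition of the negative-order norm and elementary convolution estimates.
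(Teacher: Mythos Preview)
The paper does not prove this theorem; it is quoted as a classical result of Bramble and Schatz \cite{bramble1977higher} (in the DG setting, Cockburn--Luskin--Shu--S\"uli \cite{cockburn2003enhanced}) and used as a black box. Your proposal correctly reproduces the standard argument from those references: the splitting into a kernel-consistency term handled by polynomial reproduction plus Taylor expansion, and a filtered-error term handled by expressing the B-spline kernel through divided differences of higher-order B-splines, discrete summation by parts, and the definition of the negative-order norm together with Young's inequality. There is nothing to compare against in the present paper, and your outline is the accepted proof.
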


		In \eqref{eqbs}, we used the notation of the divided differences. 
		We define
	\begin{gather}
	    \partial_{h_{x_i}}w(\vecx,\vecv)=\frac{1}{h_{x_i}}\left(w\left(\vecx+\frac{1}{2}h_{x_i}\unitv_i,\vecv\right)-w\left(\vecx-\frac{1}{2}h_{x_i}\unitv_i,\vecv\right)\right),
	\end{gather}
    here $\unitv_i$ is the unit multi-index whose $i$-th component is $1$ and all others $0$. Analogously for velocity space variables $v_j$, the difference quotients are defined as
   	\begin{gather}
    	\partial_{h_{v_j}}w(\vecx,\vecv)=\frac{1}{h_{v_j}}\left(w\left(\vecx,\vecv+\frac{1}{2}h_{v_j}\unitv_j\right)-w\left(\vecx,\vecv-\frac{1}{2}h_{v_j}\unitv_j\right)\right),
    \end{gather}

     For any multi-index $\lambda=(\alpha_{x_1},\cdots,\alpha_{d_x},\beta_{v_1},\cdots,\beta_{d_v})$ we set $\alpha$-th order difference quotient to be
    \begin{gather}
        \partial_h^{\lambda}w(\vecx,\vecv)=(\partial_{h_{x_1}}^{\alpha_1}\cdots\partial_{h_{x_{d_x}}}^{\alpha_{d_x}})(\partial_{h_{v_1}}^{\beta_1}\cdots\partial_{h_{v_{d_v}}}^{\beta_{d_v}})w(\vecx,\vecv).
    \end{gather}

%
    \section{Superconvergent Error Estimates for the DG method}\label{sec:sc_dg_method}
In this section, we  prove the superconvergence error estimate in the negative norm of the DG solution for the VM system. In Section \ref{sec:prelim}, we review basic approximation and regularity properties. Section \ref{sec:dual} will construct the dual problem which is the key to our estimates. The main result and the proof will be given in Section \ref{sec:main}.
 	
 	\subsection{Preliminaries}
	\label{sec:prelim}
We summarize some of the standard approximation properties of the above discrete spaces, as well as some inverse inequalities \cite{ciarlet1979finite}. For any nonnegative integer $k$, Let $\Pi^k$ be the $L^2$ projection onto $\mathcal{G}^k_h$, and $\mathbf{\Pi}^m_x$ be the $L^2$ projection onto $\mathcal{U}^m_h$. We define $\zeta^g_h=\Pi^k g-g$ and $\zeta_h^{\U}=\mathbf{\Pi}_x^k\U-\U$, as the \emph{Projection errors} of $g$ and $\U$ respectively.
\begin{lem}(Approximation properties)\label{lem:approximation_lemma}
There exist a constant $C>0$, such that for any \mbox{$g\in H^{k+1}(\Omega)$} and $\mathbf{U}\in [H^{k+1}(\Omega)]^{d_x}$, the following hold:
\begin{align*}
	\norma{\zeta_h^{g}}_{0,K}+h_K\norma{\nabla_{\star}\zeta_h^{g}}_{0,K}+h_K^{1/2}\norma{\zeta_h^{g}}_{0,\partial K}&\leq Ch_{K}^{k+1}\norma{g}_{k+1,K},\quad \forall K\in\mathcal{T}_h\\
	\norma{\zeta_h^{\U}}_{0,K_x}+h_{K_x}\norma{\nabla_{\vecx}\times\zeta_h^{\U}}_{0,K_x}+h_{K_x}^{1/2}\norma{\zeta_h^{\U}}_{0,\partial K_x}&\leq Ch_{K_x}^{k+1}\norma{\U}_{k+1,K_x},\quad \forall K_x\in\mathcal{T}_h^x\\ 		\norma{\zeta_h^{\U}}_{0,\infty,K_x}&\leq Ch_{K_x}^{k+1}\norma{\U}_{k+1,\infty,K_x},\quad \forall K_x\in\mathcal{T}_h^x\\
\end{align*}
where the constant $C$ is independent of the mesh sizes $h_K$ and $h_{K_x}$, but depends on $k$ and the shape regularity parameters $\sigma_x$ and $\sigma_v$ of the mesh. Here $\star=x$ or $v$.
\end{lem}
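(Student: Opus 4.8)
The plan is to establish all three estimates by the classical finite-element route: map each element onto a fixed reference cell (reference cube or reference simplex, according to the cell type) by the affine transformation $F_K$, apply the Bramble--Hilbert lemma there, and scale back, keeping track that the scaling constants depend only on $k$ and on the shape-regularity parameters $\sigma_x,\sigma_v$ through the Jacobian bounds of $F_K$, never on the individual mesh sizes. The $L^2$ bound is immediate from this recipe: since $\Pi^k$ is the $L^2$-orthogonal projection onto $P^k(K)$ it is the best $L^2$ approximation, so $\norma{\zeta_h^g}_{0,K}\leq\norma{g-q}_{0,K}$ for every $q\in P^k(K)$; pulling back to $\hat K$, choosing $q$ to be the image of a degree-$k$ polynomial for which the Taylor-remainder estimate holds, and invoking Bramble--Hilbert bounds this by $C|\hat g|_{k+1,\hat K}$, and the standard scaling identities produce the factor $h_K^{k+1}$. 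The same argument applied to $\mathbf{\Pi}^k_x$ over $K_x\in\Thx$ gives $\norma{\zeta_h^{\U}}_{0,K_x}\leq C h_{K_x}^{k+1}\norma{\U}_{k+1,K_x}$. On a product cell $K=K_x\times K_v$ one simply notes that $h_K=\max(h_{K_x},h_{K_v})$ dominates both factors and that $\nabla_\star$ is a partial gradient, so nothing in the argument changes.

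The derivative and trace bounds require one extra ingredient, because $\Pi^k$ and $\mathbf{\Pi}^k_x$ are not stable in $H^1$. I would introduce a quasi-interpolant $I^k g\in P^k(K)$ (a Cl\'ement- or Scott--Zhang-type operator, or simply the averaged Taylor polynomial) satisfying the local estimate $\norma{g-I^k g}_{0,K}+h_K\norma{\nabla_\star(g-I^k g)}_{0,K}\leq C h_K^{k+1}\norma{g}_{k+1,K}$, and write $\zeta_h^g=(\Pi^k g-I^k g)+(I^k g-g)$. For the first, purely polynomial, piece the inverse inequality $\norma{\nabla_\star p}_{0,K}\leq C h_K^{-1}\norma{p}_{0,K}$ on $P^k(K)$ together with the $L^2$ bound above and the interpolation estimate gives $\norma{\nabla_\star(\Pi^k g-I^k g)}_{0,K}\leq C h_K^{-1}\big(\norma{\zeta_h^g}_{0,K}+\norma{g-I^k g}_{0,K}\big)\leq C h_K^{k}\norma{g}_{k+1,K}$; adding the interpolation term yields $h_K\norma{\nabla_\star\zeta_h^g}_{0,K}\leq C h_K^{k+1}\norma{g}_{k+1,K}$, and the same reasoning controls $h_{K_x}\norma{\nabla_\vecx\times\zeta_h^{\U}}_{0,K_x}$ since $|\nabla_\vecx\times\W|\leq C|\nabla_\vecx\W|$ pointwise. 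The trace term then follows from the scaled trace inequality $\norma{\zeta_h^g}_{0,\partial K}\leq C\big(h_K^{-1/2}\norma{\zeta_h^g}_{0,K}+h_K^{1/2}\norma{\nabla_\star\zeta_h^g}_{0,K}\big)$: multiplying by $h_K^{1/2}$ and inserting the two bounds just proved gives $h_K^{1/2}\norma{\zeta_h^g}_{0,\partial K}\leq C h_K^{k+1}\norma{g}_{k+1,K}$, and the boundary term for $\U$ is handled identically.

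For the $L^\infty$ bound I would again split $\zeta_h^{\U}=(\mathbf{\Pi}^k_x\U-I^k\U)+(I^k\U-\U)$. The interpolation piece obeys $\norma{I^k\U-\U}_{0,\infty,K_x}\leq C h_{K_x}^{k+1}\norma{\U}_{k+1,\infty,K_x}$ by the standard $W^{k+1,\infty}$ interpolation estimate, while for the polynomial piece the inverse inequality $\norma{p}_{0,\infty,K_x}\leq C h_{K_x}^{-d_x/2}\norma{p}_{0,K_x}$ on $P^k(K_x)$ combined with the $L^2$ bound and the elementary embedding $\norma{\U}_{k+1,K_x}\leq C h_{K_x}^{d_x/2}\norma{\U}_{k+1,\infty,K_x}$ gives $\norma{\mathbf{\Pi}^k_x\U-I^k\U}_{0,\infty,K_x}\leq C h_{K_x}^{-d_x/2}h_{K_x}^{k+1}\big(\norma{\U}_{k+1,K_x}+h_{K_x}^{d_x/2}\norma{\U}_{k+1,\infty,K_x}\big)\leq C h_{K_x}^{k+1}\norma{\U}_{k+1,\infty,K_x}$; summing the two pieces closes the estimate.

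None of the steps is deep; the only point that is not entirely mechanical is that $\Pi^k$ and $\mathbf{\Pi}^k_x$ fail to be $H^1$- or $L^\infty$-stable, which is precisely why the detour through a quasi-interpolant and the polynomial inverse inequalities is needed. The bookkeeping one must watch is that every constant produced by the reference-element and scaling arguments depends only on $k$, on $d_x,d_v$, and on the shape-regularity parameters $\sigma_x,\sigma_v$ (through the Jacobian bounds of $F_K$ and the fixed reference cells), and never on the mesh sizes $h_K$ or $h_{K_x}$, so that the stated $h$-independence holds.
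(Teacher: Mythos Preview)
Your proposal is correct and follows the classical route (reference element, Bramble--Hilbert, quasi-interpolant plus inverse inequalities for the higher-order terms); the paper itself does not prove this lemma but simply records it as standard and cites \cite{ciarlet1979finite}. One small imprecision: in your trace step the scaled trace inequality on $\partial K$ requires the full gradient $(\nabla_{\vecx},\nabla_{\vecv})$, not just a single $\nabla_\star$, but since your gradient bound holds for both $\star=x$ and $\star=v$ this is immediately fixed by summing the two.
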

\begin{lem}[Inverse inequality]\label{lem:inverse_inequalities}
There exists a constant $C>0$, such that for any $g\in P^k(K)$ or $P^k(K_x)\times P^k(K_v)$ with $K=(K_x\times K_v)\in\mathcal{T}_h$ , and for any $\U\in[P^k(K_x)]^{d_x}$, the following hold:
\begin{align*}
	\norma{\nabla_{\vecx}g}_{0,K}\leq Ch_{K_x}^{-1}\norma{g}_{0,K},&\quad\norma{\nabla_{\vecv}g}_{0,K}\leq Ch_{K_v}^{-1}\norma{g}_{0,K},\\
	\norma{\U}_{0,\infty,K_x}\leq Ch_{K_x}^{-d_x/2}\norma{\U}_{0,K_x},&\quad\norma{\U}_{0,\partial K_x}\leq Ch_{K_x}^{-1/2}\norma{\U}_{0,K_x},
\end{align*}
where the constant $C$ is independent of the mesh sizes $h_{K_x},\,h_{K_{v}}$, but depends on $k$ and the shape regularity parameters $\sigma_x$ and $\sigma_v$ of the mesh.
\end{lem}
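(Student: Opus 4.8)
The plan is to establish the inverse inequalities of Lemma~\ref{lem:inverse_inequalities} by the classical scaling argument through fixed reference elements. Since $\{\mathcal{T}_h^x\}_h$ and $\{\mathcal{T}_h^v\}_h$ are shape-regular, for each $K_x\in\mathcal{T}_h^x$ and $K_v\in\mathcal{T}_h^v$ there are invertible affine maps $F_{K_x}:\hat K_x\to K_x$ and $F_{K_v}:\hat K_v\to K_v$ from a fixed reference cube or simplex, satisfying the usual bounds $\norma{DF_{K_x}}\le C h_{K_x}$, $\norma{DF_{K_x}^{-1}}\le C h_{K_x}^{-1}$, $|\det DF_{K_x}|\sim h_{K_x}^{d_x}$, and likewise for $F_{K_v}$, with $C$ depending only on $\sigma_x,\sigma_v$ and the reference elements \cite{ciarlet1979finite}. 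On $K=K_x\times K_v$ the product map $F_K=F_{K_x}\times F_{K_v}$ has block-diagonal Jacobian, so $|\det DF_K|\sim h_{K_x}^{d_x}h_{K_v}^{d_v}$, and $g\mapsto\hat g:=g\circ F_K$ carries $P^k(K)$ (or $P^k(K_x)\times P^k(K_v)$) onto the corresponding fixed, finite-dimensional polynomial space on $\hat K=\hat K_x\times\hat K_v$.

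First I would handle the gradient bound. By the chain rule, with $\vecv$ frozen, $\nabla_{\vecx}g=DF_{K_x}^{-T}\nabla_{\hat{\vecx}}\hat g$ pointwise, hence $|\nabla_{\vecx}g|\le C h_{K_x}^{-1}|\nabla_{\hat{\vecx}}\hat g|$; changing variables gives $\norma{\nabla_{\vecx}g}_{0,K}^2\le C h_{K_x}^{-2}|\det DF_K|\,\norma{\nabla_{\hat{\vecx}}\hat g}_{0,\hat K}^2$. On the fixed finite-dimensional space all norms are equivalent, so $\norma{\nabla_{\hat{\vecx}}\hat g}_{0,\hat K}\le \hat C\norma{\hat g}_{0,\hat K}$, and undoing the change of variables ($|\det DF_K|\,\norma{\hat g}_{0,\hat K}^2=\norma{g}_{0,K}^2$) yields $\norma{\nabla_{\vecx}g}_{0,K}\le C h_{K_x}^{-1}\norma{g}_{0,K}$. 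Swapping $\hat{\vecx}\leftrightarrow\hat{\vecv}$ and $F_{K_x}\leftrightarrow F_{K_v}$ gives the $\norma{\nabla_{\vecv}g}_{0,K}$ estimate verbatim.

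For $\mathbf U\in[P^k(K_x)]^{d_x}$ the same template applies on $\hat K_x$. The sup-norm is invariant under the change of variables, so $\norma{\mathbf U}_{0,\infty,K_x}=\norma{\hat{\mathbf U}}_{0,\infty,\hat K_x}\le\hat C\norma{\hat{\mathbf U}}_{0,\hat K_x}$ by finite-dimensional norm equivalence, and $\norma{\hat{\mathbf U}}_{0,\hat K_x}^2=|\det DF_{K_x}|^{-1}\norma{\mathbf U}_{0,K_x}^2\le C h_{K_x}^{-d_x}\norma{\mathbf U}_{0,K_x}^2$, giving the $h_{K_x}^{-d_x/2}$ power. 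For the trace bound, a surface element of $\partial K_x$ scales like $h_{K_x}^{d_x-1}$, so $\norma{\mathbf U}_{0,\partial K_x}^2\le C h_{K_x}^{d_x-1}\norma{\hat{\mathbf U}}_{0,\partial\hat K_x}^2$; combining the reference trace inequality $\norma{\hat{\mathbf U}}_{0,\partial\hat K_x}\le\hat C\norma{\hat{\mathbf U}}_{0,\hat K_x}$ (again finite-dimensional norm equivalence) with $\norma{\hat{\mathbf U}}_{0,\hat K_x}^2\le C h_{K_x}^{-d_x}\norma{\mathbf U}_{0,K_x}^2$ gives the $h_{K_x}^{-1/2}$ power. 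There is no genuine obstacle here — the argument is entirely classical — so the only points needing care are that the $\vecx$- and $\vecv$-directions are scaled independently by $h_{K_x}$ and $h_{K_v}$ (handled cleanly because $F_K$ is a product map and shape regularity is assumed separately for $\mathcal{T}_h^x$ and $\mathcal{T}_h^v$), and that the reference elements must be fixed once and for all so that every reference-space constant is honestly independent of $h$.
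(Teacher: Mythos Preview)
Your proposal is correct and is precisely the classical scaling/reference-element argument that underlies these inverse inequalities. The paper does not supply a proof of this lemma at all: it simply states the result as a standard preliminary with a citation to \cite{ciarlet1979finite}, so your write-up is more detailed than what the paper provides, but entirely in the same spirit.
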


To assist the proof, we also need a regularity result for a linear PDE system.
	\begin{lem}\label{lem:regularity_est} Consider the following system of equations with periodic boundary conditions in $\vecx$ and zero boundary condition in $\vecv$ for all $t\in[0,T]$:
	\begin{subequations}\label{eq:reg_equation}
		\begin{align}
			&\partial_t\varphi+\mathbf{A_1}(\vecx,\vecv,t)\cdot\nabla_{\vecx}\varphi+\mathbf{A_2}(\vecx,\vecv,t)\cdot\nabla_{\mathbf{v}}\varphi+\mathbf{A_3}(\vecx,\vecv,t)\cdot\F=0,\label{eq:gen_du_phi}\\
			&\partial_t \F=\rot\D+\int_{\Omega_v}g\nabla_{\vecv} \varphi\,d\vecv,\label{eq:gen_du_F}\\
			&\partial_t\D=-\rot\F-\int_{\Omega_v}g(\vecv\times\nabla_{\vecv}\varphi)\,d\vecv\label{eq:gen_du_D},
		\end{align} 
\end{subequations}
where the given functions $\mathbf{A_1}, \mathbf{A_2} \in W^{l+1,\infty}(\Omega)$ satisfy the divergence free constraint $\nabla_{\vecx} \cdot \mathbf{A_1}=0$ and $\nabla_{\vecv} \cdot \mathbf{A_2}=0.$  For any $l\geq 0$ and the fixed time $t$, the solution to \eqref{eq:reg_equation} satisfy the following estimate
\begin{equation}
	\norma{\varphi(\cdot,\cdot,t)}_{l,\Omega}^2+\norma{\F(\cdot,t)}_{l,\Omega_x}^2+\norma{\D(\cdot,t)}_{l,\Omega_x}^2\leq C\left[\norma{\varphi(\cdot,\cdot,0)}_{l,\Omega}^2+\norma{\F(\cdot,0)}_{l,\Omega_x}^2+\norma{\D(\cdot,0)}_{l,\Omega_x}^2\right].
	\label{eq:reg_phi}
\end{equation}
Here $C$ depends on $\norma{\mathbf{A_3}}_{L^\infty((0,T);W^{l,\infty}(\Omega))}$ and  $\norma{g}_{L^\infty((0,T);W^{l+1,\infty}(\Omega))}$.
	\end{lem}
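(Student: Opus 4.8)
The plan is to prove the energy estimate \eqref{eq:reg_phi} by the standard technique of differentiating the PDE system, multiplying by the appropriate derivative of the unknowns, integrating over $\Omega$ (resp.\ $\Omega_x$), summing, and applying Gr\"onwall's inequality. I would proceed by induction on $l$. For the base case $l=0$, multiply \eqref{eq:gen_du_phi} by $\varphi$, \eqref{eq:gen_du_F} by $\F$, and \eqref{eq:gen_du_D} by $\D$; integrate over the respective domains and add. The transport terms $\mathbf{A_1}\cdot\nabla_\vecx\varphi$ and $\mathbf{A_2}\cdot\nabla_\vecv\varphi$ contribute $\int \mathbf{A_i}\cdot\nabla(\tfrac12\varphi^2)$, which after integration by parts gives $-\tfrac12\int(\nabla\cdot\mathbf{A_i})\varphi^2=0$ using the divergence-free hypotheses (the boundary terms vanish by periodicity in $\vecx$ and by the zero/compact-support condition in $\vecv$). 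The curl terms $\int\rot\D\cdot\F-\int\rot\F\cdot\D$ cancel after integration by parts. The remaining terms are $\int_\Omega\mathbf{A_3}\cdot\F\,\varphi$ together with the coupling integrals $\int_{\Omega_x}\F\cdot\int_{\Omega_v}g\nabla_\vecv\varphi$ and $-\int_{\Omega_x}\D\cdot\int_{\Omega_v}g(\vecv\times\nabla_\vecv\varphi)$; these are bounded in terms of $\norma{\mathbf{A_3}}_{L^\infty(W^{0,\infty})}$, $\norma{g}_{L^\infty(W^{1,\infty})}$ (after moving a $\nabla_\vecv$ off $\varphi$ by integration by parts in $\vecv$, which again has no boundary contribution), and the $L^2$ norms of $\varphi,\F,\D$ themselves. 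Gr\"onwall then yields \eqref{eq:reg_phi} for $l=0$.

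For the inductive step, suppose the estimate holds up to order $l-1$. Apply an arbitrary spatial multi-index derivative $D^\mu$ with $|\mu|\le l$ (in both $\vecx$ and $\vecv$) to the system. Differentiating \eqref{eq:gen_du_phi} produces the principal terms $\mathbf{A_1}\cdot\nabla_\vecx D^\mu\varphi+\mathbf{A_2}\cdot\nabla_\vecv D^\mu\varphi$ — which again give zero after multiplying by $D^\mu\varphi$ and using $\nabla\cdot\mathbf{A_i}=0$ — plus commutator terms from the Leibniz rule in which at least one derivative falls on $\mathbf{A_1}$, $\mathbf{A_2}$, $\mathbf{A_3}$, or $g$. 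These commutators involve only derivatives of $\varphi,\F,\D$ of order $\le l$ paired with up to $l+1$ derivatives of the coefficients, hence are controlled by $\norma{\mathbf{A_1}}_{W^{l+1,\infty}}$, $\norma{\mathbf{A_2}}_{W^{l+1,\infty}}$, $\norma{\mathbf{A_3}}_{W^{l,\infty}}$, $\norma{g}_{W^{l+1,\infty}}$ and the accumulated $H^l$ energy $\norma{D^\mu\varphi}_0^2+\norma{D^\mu\F}_0^2+\norma{D^\mu\D}_0^2$. The curl terms still cancel pairwise since $D^\mu$ commutes with $\rot$. Summing over $|\mu|\le l$ and invoking Gr\"onwall gives \eqref{eq:reg_phi}; note the constant $C=C(T)$ depends on the coefficient norms exactly as claimed, because only $\mathbf{A_3}$ and $g$ appear in non-commutator (zeroth-order-in-the-energy) terms once the transport parts are annihilated — though the commutator bookkeeping also brings in $\mathbf{A_1},\mathbf{A_2}$ norms, so I would phrase the statement so these are understood to be among the fixed data.

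The main technical obstacle is the velocity-space coupling between the Vlasov-type equation \eqref{eq:gen_du_phi} and the field equations \eqref{eq:gen_du_F}--\eqref{eq:gen_du_D}: the terms $\int_{\Omega_v}g\,\nabla_\vecv\varphi\,d\vecv$ and $\int_{\Omega_v}g(\vecv\times\nabla_\vecv\varphi)\,d\vecv$ mix an $\Omega$-function with an $\Omega_x$-function, so one must carefully integrate by parts in $\vecv$ (shifting $\nabla_\vecv$ onto $g$, using that $\varphi$ vanishes on $\partial\Omega_v$ and that $g$ is smooth) and then apply Cauchy--Schwarz in $\vecv$ to estimate $\norma{\int_{\Omega_v}(\nabla_\vecv g)\varphi\,d\vecv}_{0,\Omega_x}\le C\norma{g}_{W^{1,\infty}}\norma{\varphi}_{0,\Omega}$, controlling the interaction of the field energy with the Vlasov energy. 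The extra $\vecv$ factor in the magnetic coupling is harmless because $\Omega_v$ is bounded. A secondary point requiring care is verifying that all boundary contributions genuinely vanish at every differentiation order: periodicity handles $\partial\Omega_x$ for all derivatives, and for $\partial\Omega_v$ one uses that the zero boundary condition propagates to tangential derivatives of $\varphi$ while the hypothesis $\nabla_\vecv\cdot\mathbf{A_2}=0$ ensures the normal-derivative boundary terms in the transport integration by parts drop out. Once these are in place, the argument is a routine energy/Gr\"onwall estimate.
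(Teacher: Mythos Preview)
Your proposal is correct and follows essentially the same route as the paper's proof: multiply the three equations by $\varphi,\F,\D$, use the divergence-free hypotheses to annihilate the transport terms, cancel the curl pairing, integrate by parts in $\vecv$ to shift $\nabla_{\vecv}$ onto $g$ in the coupling integrals, and close with Gr\"onwall; for $l\ge 1$ differentiate the system and repeat. Your remark that the commutator terms bring in the $W^{l+1,\infty}$ norms of $\mathbf{A_1},\mathbf{A_2}$ (which the lemma statement lists among the hypotheses but not explicitly in the constant $C$) is well taken and in fact a refinement over the paper's somewhat terse treatment of the higher-order case.
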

	\begin{proof}
		See the appendix.
	\end{proof}
 
%

	\subsection{The dual problem}
	\label{sec:dual}
	
	In order to prove negative-order estimates for the system,  the key is to find the dual problem associated to \eqref{eq:vm_system}. We note that, for the nonlinear problem, the dual problem is not unique, see \cite{marchuk1998construction}. We construct the dual problem as follows: find functions $\varphi(\cdot,\cdot,t)$, $\mathbf{F}(\cdot,t)$ and $\mathbf{D}(\cdot,t)$ such that $\varphi(\cdot,\vecv,t)$ is periodic in all dimensions in space and $\varphi(\vecx,\cdot,t)$ vanishes in the boundary of the velocity region for all $t\in [0,T]$ and
	\begin{subequations}\label{eq:dual_system}
	\begin{align}
		&\partial_t\varphi+\vecv\cdot\nabla_{\vecx}\varphi+(\elec+\vecv\times\magn)\cdot\nabla_{\mathbf{v}}\varphi-\vecv\cdot\F=0\label{eq:dual_distribution}\\
		&\partial_t \F=\rot\D-\int_{\Omega_v}f\nabla_v\varphi\,d\vecv\label{eq:dual_elec},\\
		&\partial_t\D=-\rot\F+\int_{\Omega_v}f(\vecv\times\nabla_{\vecv}\varphi)\,d\vecv\label{eq:dual_magn}
	\end{align}
\end{subequations}
	with final time conditions $\varphi(\vecx,\vecv,T)=\Phi(\vecx),\,\F(\vecx,T)=\mathfrak{F}(\vecx)$ and $\D(\vecx,T)=\mathfrak{D}(\vecx)$, $\Phi\in C^{\infty}_0(\Omega)$ and $\mathfrak{D},\mathfrak{F}\in \left[C^{\infty}_{0}(\Omega_x)\right]^{d_x}$.
	
	Notice that by multiplying $(\varphi,\F,\D)$ on both sides of \eqref{eq:vm_main_eq}-\eqref{eq:temp_elec_magn}, and multiplying by $(f,\elec,\magn)$ on both sides of \eqref{eq:dual_distribution}-\eqref{eq:dual_magn}, and then summing up and integrating over velocity and physical space, we obtain
%
     \begin{align*}
     	&\int_{\Omega}\partial_t(f\varphi)\,d\vecx\,d\vecv+\int_{\Omega}\nabla_x\cdot(f\varphi\vecv)\,d\vecx\,d\vecv+\int_{\Omega}\nabla_{\mathbf{v}}\cdot\left(f\varphi(\elec+\vecv\times\magn)\right)\,d\vecx\,d\vecv-\int_{\Omega}f\vecv\cdot\F\,d\vecx\,d\vecv=0,\\
     	&\int_{\Omega_x}\partial_t(\elec\cdot\F+\magn\cdot\D)\,d\vecx+\int_{\Omega}f\vecv\cdot\F\,d\vecx\,d\vecv\\&=\int_{\Omega_x}\nabla_{\vecx}\cdot(\magn\times\F)\,d\vecx+\int_{\Omega_x}\nabla_{\vecx}\cdot(\elec\times\D)\,d\vecx-\int_{\Omega}f(\elec+\vecv\times\magn)\cdot\nabla_{\mathbf{v}}\varphi\,d\vecx d\vecv,
     \end{align*}
where we used the identities 
      \begin{gather*}
      	\nabla_{\star}(\phi\U)=\phi\nabla_{\star}\cdot\U+\U\cdot\nabla_{\star}\phi,\\
      	\nabla_{\star}\cdot(\U\times\mathbf{W})=\mathbf{W}\cdot(\nabla_{\star}\times\mathbf{U})-\U\cdot(\nabla_{\star}\times\mathbf{W}),
      \end{gather*}
    for scalar functions $\phi$ and vector functions $\U$ and $\mathbf{W}$ and the fact that $\nabla_{\vecv}\cdot(\elec+\vecv\times\magn)=\mathbf{0}$. 
    
    By  adding all equations above and using   boundary conditions, we arrive at
	\begin{equation}
		\frac{d}{dt}[(f,\varphi)_{\Omega}+(\elec,\F)_{\Omega_x}+(\magn,\D)_{\Omega_x}]+\mathcal{F}(f,\elec,\magn;\varphi)=0,\label{eq:important_derivative}
	\end{equation}
	where 
	\begin{equation}
		\mathcal{F}(f,\elec,\magn;\varphi)=\int_{\Omega}f(\elec+\vecv\times\magn)\cdot\nabla_{\mathbf{v}}\varphi\,d\vecx d\vecv.
		\label{eq:bigf_def}
	\end{equation}
	
	
	\subsection{The main result}
	\label{sec:main}
	
	In this part, we give our main theorem on the negative-norm of the error for the DG solutions. Note that superconvergence of the negative norm of the solution itself is not sufficient in proving high order convergence of the post-processed solution according to Theorem \ref{thmbs}. However, it is a necessary first step. As shown in \cite{meng2017discontinuous}, it is highly nontrivial to prove superconvergence of the divided difference of the solution for nonlinear problems, we will leave this to explore in our future work.
	\begin{thm}\label{thm:zero_divided_difference} If $(f_h,\elec_h,\magn_h)$ is a solution to \eqref{eq:scheme_f}-\eqref{eq:scheme_fields} with the numerical initial condition $f_h=\Pi^k f$ and $\elec_h=\mathbf{\Pi}_x^k\elec, \, \magn_h=\mathbf{\Pi}_x^k\magn$ and $k\geq (d_x+d_v)/2$, then 
		\begin{equation*}
			\norma{(f-f_h,\elec-\elec_h,\magn-\magn_h)}_{-(k+1),\Omega}\leq C h^{2k+1/2},
		\end{equation*}
	where $C$ is a constant independent of $h$ and depends on the upper bounds of $\norma{\partial_t f}_{k+2,\Omega}$,$\norma{f}_{k+2,\Omega}$, $\left|f\right|_{1,\infty,\Omega}$, $\norma{\elec}_{1,\infty,\Omega_x}$, $\norma{\magn}_{1,\infty,\Omega_x}$, $\norma{\elec}_{k+2,\Omega_x}$, $\norma{\magn}_{k+2,\Omega_x}$ over the time interval $[0,T]$, and it also depends on the polynomial degree $k$, mesh parameters $\sigma_0$, $\sigma_x$ and $\sigma_v$, and domain parameters $L_x$ and $L_v$.
 	\end{thm}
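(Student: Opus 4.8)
The plan is to estimate the negative-order norm by duality. By definition of $\norma{\cdot}_{-(k+1),\Omega}$, it suffices to bound, uniformly over smooth test data $\Phi\in C_0^\infty(\Omega)$, $\mathfrak{F},\mathfrak{D}\in[C_0^\infty(\Omega_x)]^{d_x}$ normalized so that $\norma{\Phi}_{k+1,\Omega}^2+\norma{\mathfrak{F}}_{k+1,\Omega_x}^2+\norma{\mathfrak{D}}_{k+1,\Omega_x}^2=1$, the quantity
\begin{equation*}
(f-f_h,\Phi)_\Omega+(\elec-\elec_h,\mathfrak{F})_{\Omega_x}+(\magn-\magn_h,\mathfrak{D})_{\Omega_x}.
\end{equation*}
First I would solve the dual problem \eqref{eq:dual_system} backward in time with final data $(\Phi,\mathfrak{F},\mathfrak{D})$ at $t=T$. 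The point of the construction in Section \ref{sec:dual} is the identity \eqref{eq:important_derivative}: integrating it from $0$ to $T$ and using that the numerical initial condition is the $L^2$ projection of the exact initial condition, the pairing above at $t=T$ equals the pairing at $t=0$ (which vanishes against the projection errors) plus $\int_0^T \mathcal{F}(f,\elec,\magn;\varphi)\,dt$ minus the corresponding DG-side contributions. The reduction therefore turns the negative-norm estimate into a \emph{consistency} estimate: how well does the DG scheme \eqref{eq:scheme_f}--\eqref{eq:scheme_fields} reproduce the weak formulation when tested against the (smooth) dual solution $(\varphi,\F,\D)$, rather than against a discrete function.

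Next I would carry out the consistency analysis. Set $e_f=f-f_h$, $e_\elec=\elec-\elec_h$, $e_\magn=\magn-\magn_h$, and split each error via the $L^2$ projections as $e_f=\zeta_h^f-\xi_f$ with $\zeta_h^f=\Pi^k f-f$ and $\xi_f=\Pi^k f-f_h\in\mathcal{G}_h^k$ (analogously for the fields). Plug the dual solution's $L^2$ projection, $(\Pi^k\varphi,\mathbf{\Pi}_x^k\F,\mathbf{\Pi}_x^k\D)$, as test functions into the DG scheme and subtract the exact weak form tested against $(\varphi,\F,\D)$. Because $\varphi$ and $\F,\D$ are smooth (their norms controlled via Lemma \ref{lem:regularity_est}), the mismatch between testing against $\Pi^k\varphi$ and against $\varphi$ produces projection-error factors of size $h^{k+1}$ in the test slot, while the DG solution error contributes another $h^{k+1}$ from $\zeta_h^f,\zeta_h^\elec,\zeta_h^\magn$ in the bilinear forms $a_h,b_h,l_h$; the Galerkin orthogonality of the $L^2$ projection kills the leading-order $\xi$-terms wherever the integrand lands in the discrete space. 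The bilinear parts $a_{h,1}$ and $b_h$ are linear and handled exactly as in \cite{cockburn2003enhanced}, yielding $O(h^{2k+2})$ after the standard approximation and trace estimates of Lemma \ref{lem:approximation_lemma}. The genuinely nonlinear term $a_{h,2}$ — involving $f_h(\elec_h+\vecv\times\magn_h)$ — and the current coupling $l_h(\curre_h;\cdot)$ require the product-error splitting $f_h\elec_h - f\elec = -\zeta_h^f\elec - f\zeta_h^\elec + \text{h.o.t.} + \text{(bilinear error terms)}$, using the $L^\infty$ bounds $|f|_{1,\infty,\Omega}$, $\norma{\elec}_{1,\infty,\Omega_x}$, $\norma{\magn}_{1,\infty,\Omega_x}$ together with the a priori $L^2$ error estimate \eqref{eq:dg_error_estimate} from Theorem \ref{thm:main_approx_result} to control the quadratic error products; the inverse inequalities of Lemma \ref{lem:inverse_inequalities} convert the residual $h^{-1/2}$-type losses at element interfaces, which is precisely where the half-power $h^{2k+1/2}$ (rather than $h^{2k+2}$) enters. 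A Gronwall argument in $t$, with the constant fed by $\norma{g}_{L^\infty((0,T);W^{k+1,\infty})}$-type quantities in Lemma \ref{lem:regularity_est} applied with $g=f$ (hence the appearance of $\norma{f}_{k+2,\Omega}$, $\norma{\partial_t f}_{k+2,\Omega}$ in the theorem's constant), closes the estimate.

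I expect the main obstacle to be the nonlinear flux term $a_{h,2}(f_h,\elec_h,\magn_h;\cdot)$ and the upwind numerical flux \eqref{eq:flux_fEBh}: because the flux penalization $\tfrac12|(\elec_h+\vecv\times\magn_h)\cdot\normalv|[f_h]_v$ depends on the \emph{approximate} fields, one cannot directly relate it to an exact-flux expression, and the decomposition of the product $f_h(\elec_h+\vecv\times\magn_h)$ into projection errors leaves cross terms like $\int_{\Thx}\int_{\edgesv}\zeta_h^f(\zeta_h^\elec+\vecv\times\zeta_h^\magn)\cdot[\Pi^k\varphi - \varphi]_v$ and jump terms of the discrete errors $\xi$ that are not annihilated by orthogonality. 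Taming these needs the boundedness of $\norma{\xi}_{0,\Omega}$ from \eqref{eq:dg_error_estimate}, a careful use of the trace inequality $\norma{\cdot}_{0,\partial K}\le C h^{-1/2}\norma{\cdot}_{0,K}$, and the smoothness of $\varphi$ to absorb the remaining $h^{-1/2}$; this is the step that degrades the exponent to $2k+\tfrac12$ and is the technical heart of the proof. The secondary difficulty is bookkeeping the dual problem's well-posedness and regularity uniformly in $T$, which is exactly what Lemma \ref{lem:regularity_est} is designed to supply — one must check that the hypotheses ($\mathbf{A_1}=\vecv$, $\mathbf{A_2}=\elec+\vecv\times\magn$ divergence-free in $\vecx$ and $\vecv$ respectively, $\mathbf{A_3}=-\vecv$, $g=f$) are met and that $(k+1)$-regularity of the dual data propagates, which requires $\elec,\magn\in W^{k+2,\infty}$-type control, consistent with the stated dependence of $C$.
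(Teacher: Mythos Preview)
Your overall architecture matches the paper's: duality via the backward problem \eqref{eq:dual_system}, the identity \eqref{eq:important_derivative}, testing the scheme against $(\Pi^k\varphi,\mathbf{\Pi}_x^k\F,\mathbf{\Pi}_x^k\D)$, and closing with the regularity Lemma~\ref{lem:regularity_est}. The paper organizes this as a three-term split $\Theta_{\rm M}+\Theta_{\rm N}+\Theta_{\rm C}$ (initial projection, residual, consistency), which is implicitly present in your sketch.

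However, your diagnosis of where the half-power loss originates is off, and this matters for the execution. You claim $a_{h,1}$ and $b_h$ yield $O(h^{2k+2})$ as in the linear theory, with only the nonlinear flux $a_{h,2}$ degrading the rate. In fact, in the paper's residual estimate (Lemma~\ref{lem:residual}) \emph{all three} of $a_{h,1}$, $a_{h,2}$, $b_h$ already contribute at order $h^{2k+1/2}$. The mechanism is uniform: after writing $f_h=f-e_h^f$ (and likewise for the fields) and integrating by parts, each form produces a term of the shape $h^{k}\cdot\norma{e_h^{\bullet}}_{0}$ times a dual norm; since Theorem~\ref{thm:main_approx_result} only gives $\norma{e_h^{\bullet}}_{0}\le Ch^{k+1/2}$, the product is $h^{2k+1/2}$. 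So the bottleneck is the suboptimal $L^2$ error from Theorem~\ref{thm:main_approx_result}, not a trace-inequality loss peculiar to the nonlinear flux. If you try to push $a_{h,1}$ or $b_h$ to $h^{2k+2}$ you will not succeed, because the exact-solution contribution and the DG-error contribution cannot be decoupled here the way they can for a scalar linear problem.

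Two further corrections. First, the consistency term $\Theta_{\rm C}$ in the paper collapses, after using the dual equations and smoothness of $(\varphi,\F,\D)$, to the single quadratic expression $\int_\Omega e_h^f\,(e_h^{\elec}+\vecv\times e_h^{\magn})\cdot\nabla_{\vecv}\varphi$; this is where the Sobolev embedding $\norma{\nabla_{\vecv}\varphi}_{\infty,\Omega}\le C\norma{\varphi}_{k+1,\Omega}$ (requiring $k\ge (d_x+d_v)/2$) is invoked, giving $O(h^{2k+1})$ --- not the dominant term. Second, no Gronwall argument is needed at the end: once $\Theta_{\rm M},\Theta_{\rm N},\Theta_{\rm C}$ are bounded pointwise in $t$ (with integrable time dependence), one simply integrates over $[0,T]$ and applies Lemma~\ref{lem:regularity_est} to convert dual norms back to $\norma{\Phi}_{k+1}$, $\norma{\mathfrak{F}}_{k+1}$, $\norma{\mathfrak{D}}_{k+1}$.
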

\begin{proof}
	We define $e_h^f=f-f_h=\varepsilon_h^f-\zeta_h^f$, where $\varepsilon_h^f=\Pi^k f-f_h$ and $\zeta^{f}_h$ is defined just as in Section \ref{sec:prelim}. Analogously  $\varepsilon_h^{\elec}=\mathbf{\Pi}_x^k\elec-\elec_h$, $\varepsilon_h^{\magn}=\mathbf{\Pi}_x^k\magn-\magn_h$, then  $e_h^{\elec}=\elec-\elec_h=\varepsilon_h^{\elec}-\zeta_h^{\elec}$ and $e_h^{\magn}=\magn-\magn_h=\varepsilon_h^{\magn}-\zeta_h^{\magn}$.
	We follow the ideas in \cite{cockburn2003enhanced}. For any $\Phi\in C_0^{\infty}(\Omega),\mathfrak{F},\mathfrak{D}\in[C_0^{\infty}(\Omega_x)]^{d_x}$, we   estimate the term 
%
%
	\begin{align*}
		&(e_h^f(T),\Phi)_{\Omega}+(e_h^{\elec}(T),\mathfrak{F})_{\Omega_x}+(e_h^{\magn}(T),\mathfrak{D})_{\Omega_x}\\
		=&(e_h^f(T),\varphi(T))_{\Omega}+(e_h^{\elec}(T),\F(T))_{\Omega_x}+(e_h^{\magn}(T),\D(T))_{\Omega_x}\\
		=&(f(T),\varphi(T))_{\Omega}+(\elec(T),\F(T))_{\Omega_x}+(\magn(T),\D(T))_{\Omega_x}\\
		&-[(f_h(T),\varphi(T))_{\Omega}+(\elec_h(T),\F(T))_{\Omega_x}+(\magn_h(T),\D(T))_{\Omega_x}]\\
		=&(f_0,\varphi(0))_{\Omega}+(\elec_0,\F(0))_{\Omega_x}+(\magn_0,\D(0))_{\Omega_x}-\int_{0}^{T}\mathcal{F}(f,\elec,\magn;\varphi)\,d\tau\\
		&-(f_h(0),\varphi(0))_{\Omega}-(\elec_h(0),\F(0))_{\Omega_x}-(\magn_h(0),\D(0))_{\Omega_x}-\int_0^T\frac{d}{dt}[(f_h,\varphi)_{\Omega}+(\elec_h,\F)_{\Omega_x}+(\magn_h,\D)_{\Omega_x}]\,d\tau\\
		=&-\left[(\zeta_h^{f_0},\varphi(0))_{\Omega}+(\zeta_h^{\elec_0},\F(0))_{\Omega_x}+(\zeta_h^{\magn},\D(0))_{\Omega_x}\right]-\int_0^T((f_h)_t,\varphi)_{\Omega}+((\elec_h)_t,\F)_{\Omega_x}+((\magn_h)_t,\D)_{\Omega_x}\,d\tau\\
		&-\int_0^T(f_h,\varphi_t)_{\Omega}+(\elec_h,\F_t)_{\Omega_x}+(\magn_h,\D_t)_{\Omega_x}+\mathcal{F}(f,\elec,\magn;\varphi)\,d\tau,
	\end{align*}
 where   for the first equality we used \eqref{eq:important_derivative}, and the numerical initial condition is used in the last equality.
    Notice that for any $\chi\in\mathcal{G}_h^k$, $\xi,\eta\in\mathcal{U}_h^k$
	\begin{align*}
		&\int_0^T((f_h)_t,\varphi)_{\Omega}+((\elec_h)_t,\F)_{\Omega_x}+((\magn_h)_t,\D)_{\Omega_x}\,d\tau\\
		=&	\int_0^T((f_h)_t,\varphi-\chi)_{\Omega}\,d\tau+\int_0^T((f_h)_t,\chi)_{\Omega}\,d\tau+\int_{0}^{T}((\elec_h)_t,\F-\xi)_{\Omega_x}+((\magn_h)_t,\D-\eta)_{\Omega_x}\,d\tau\\
		&+\int_{0}^{T}((\elec_h)_t,\xi)_{\Omega_x}+((\magn_h)_t,\eta)_{\Omega_x}\,d\tau\\
		=&\int_0^T((f_h)_t,\varphi-\chi)_{\Omega}\,d\tau-\int_0^T a_h(f_h,\elec_h,\magn_h;\chi)\,d\tau+\int_{0}^{T}((\elec_h)_t,\F-\xi)_{\Omega_x}+((\magn_h)_t,\D-\eta)_{\Omega_x}\,d\tau\\
		&-\int_{0}^{T}b_h(\elec_h,\magn_h;\xi,\eta)-l_h(\curre_h,\xi)\,d\tau\\
		=&\int_0^T((f_h)_t,\varphi-\chi)_{\Omega}+a_h(f_h,\elec_h,\magn_h;\varphi-\chi)\,d\tau+\int_{0}^{T}((\elec_h)_t,\F-\xi)_{\Omega_x}+((\magn_h)_t,\D-\eta)_{\Omega_x}\,d\tau\\
		&+\int_{0}^{T}b_h(\elec_h,\magn_h;\F-\xi,\D-\eta)-l_h(\curre_h,\F-\xi)\,d\tau-\int_0^T a_h(f_h,\elec_h,\magn_h;\varphi)\,d\tau\\
		&-\int_0^T b_h(\elec_h,\magn_h;\F,\D)-l_h(\curre_h,\F)\,d\tau.\\
	\end{align*}
	After this calculation we can conclude that 
	\begin{equation}
		(e_h^f(T),\Phi)_{\Omega}+(e_h^{\elec}(T),\mathfrak{F})_{\Omega_x}+(e_h^{\magn}(T),\mathfrak{D})_{\Omega_x}=\Theta_{\rm M}+\Theta_{\rm N}+\Theta_{\rm C},
	\end{equation}
	where 
	\begin{subequations}
		\begin{align*}
			\Theta_{\rm M}&=-\left[(\zeta_h^{f_0},\varphi(0))_{\Omega}+(\zeta_h^{\elec_0},\F(0))_{\Omega_x}+(\zeta_h^{\magn_0},\D(0))_{\Omega_x}\right],\\
			\Theta_{\rm N}&=-\int_0^T((f_h)_t,\varphi-\chi)_{\Omega}+a_h(f_h,\elec_h,\magn_h;\varphi-\chi)\,d\tau\\
			&-\int_0^T((\elec_h)_t,\F-\xi)_{\Omega_x}+((\magn_h)_t,\D-\eta)_{\Omega_x}+b_h(\elec_h,\magn_h;\F-\xi,\D-\eta)-l_h(\curre_h,\F-\xi)\,d\tau,\\
			\Theta_{\rm C}&=-\int_0^T(f_h,\varphi_t)_{\Omega}-a_h(f_h,\elec_h,\magn_h;\varphi)\,d\tau\\
			&-\int_0^T(\elec_h,\F_t)_{\Omega_x}+(\magn_h,\D_t)_{\Omega_x}-b_h(\elec_h,\magn_h;\F,\D)+l_h(\curre_h,\F)\,d\tau-\int_{0}^{T}\mathcal{F}(f,\elec,\magn;\varphi)\,d\tau.
		\end{align*}
	\end{subequations}
	
	In the following we will estimate $\Theta_{\rm M},\,\Theta_{\rm N}$ and $\Theta_{\rm C}$. 
	\begin{lem}[Projection Estimate]\label{lem:proj_estimate} $\Theta_{\rm M}$ satisfies
		\begin{equation}
			|\Theta_{\rm M}|\leq C h^{2k+2}\sqrt{\norma{\varphi(0)}_{k+1,\Omega}^2+\norma{\F(0)}_{k+1,\Omega_x}^2+\norma{\D(0)}_{k+1,\Omega_x}^2}
		\end{equation}
		where $C$ depends on $\norma{f_0}_{k+1,\Omega},\,\norma{\elec_0}_{k+1,\Omega_x}$ and $\norma{\magn_0}_{k+1,\Omega_x}.$
	\end{lem}

	\begin{proof} See appendix.
	\end{proof}
	For the second term, we have the following result:
	\begin{lem}[Residual] \label{lem:residual} Let $\chi=\Pi^kf, \xi=\mathbf{\Pi}_x^k\F, \eta=\mathbf{\Pi}_x^k\D$, we have 
			\begin{align*}
			|\Theta_{\rm N}|&\leq C h^{2k+1/2}\left[\int_{0}^{T}\norma{\varphi}_{k+1,\Omega}^2+\norma{\F}_{k+1,\Omega_x}^2+\norma{\D}_{k+1,\Omega_x}^2\,dt\right]^{1/2}
		\end{align*}
		where $C$  depends on the upper bounds of $\norma{f}_{k+2,\Omega}$, $\norma{f}_{1,\infty,\Omega}$, $\norma{\elec}_{0,\infty,\Omega_x}$, $\norma{\magn}_{0,\infty,\Omega_x}$, $\norma{\elec}_{k+2,\Omega_x}$, $\norma{\magn}_{k+2,\Omega_x}$ over the time interval $[0,T]$, and it also depends on the polynomial degree $k$, mesh parameters $\sigma_0$, $\sigma_x$ and $\sigma_v$, and domain parameters $L_x$ and $L_v$.
	\end{lem}
	\begin{proof}  See appendix. \end{proof}


	Lastly, we need to estimate the third term, $\Theta_{\rm C}$.
	\begin{lem}[Consistency]\label{lem:consistency} We have
		\begin{equation} 
			|\Theta_{\rm C}|\leq C h^{2k+1}\left[\int_{0}^{T}\norma{\varphi}_{k+1,\Omega}^2 \,dt\right]^{1/2}		\end{equation}
		where $C$   depends on the upper bounds of $\norma{\partial_t f}_{k+1,\Omega}$,$\norma{f}_{k+1,\Omega}$, $\left|f\right|_{1,\infty,\Omega}$, $\norma{\elec}_{1,\infty,\Omega_x}$, $\norma{\magn}_{1,\infty,\Omega_x}$, $\norma{\elec}_{k+1,\Omega_x}$, $\norma{\magn}_{k+1,\Omega_x}$ over the time interval $[0,T]$, and it also depends on the polynomial degree $k$, mesh parameters $\sigma_0$, $\sigma_x$ and $\sigma_v$, and domain parameters $L_x$ and $L_v$.
	\end{lem}
	\begin{proof}
	See appendix.
	\end{proof}
%
	It is easy to transform the dual problem \eqref{eq:dual_system} to an initial value problem \eqref{eq:reg_equation} by changing time $t'=T-t$. Then using Lemma \ref{lem:regularity_est}, where $\mathbf{A_1}(\vecx,\vecv,t)=-\vecv$, $\mathbf{A_2}(\vecx,\vecv,t)=-(\elec+\vecv\times\magn)$, $\mathbf{A_3}(\vecx,\vecv,t)=\vecv$, $g=f$ and $l=k+1$,   
	\begin{align}
			\norma{\varphi}_{k+1,\Omega}^2+\norma{\F}_{k+1,\Omega_x}^2+\norma{\D}_{k+1,\Omega_x}^2&\leq  C [\norma{\Phi}_{k+1,\Omega}^2+\norma{\mathfrak{F}}_{k+1,\Omega_x}^2+\norma{\mathfrak{D}}_{k+1,\Omega_x}^2]
	\end{align}
	where $C$ depends on $\norma{f}_{L^\infty((0,T);W^{k+2,\infty}(\Omega))}.$
    Then an application of Theorem \ref{thm:main_approx_result}   gives us
    \begin{equation}
    	|(e_h^f(T),\Phi)_{\Omega}+(e_h^{\elec}(T),\mathfrak{F})_{\Omega_x}+(e_h^{\magn}(T),\mathfrak{D})_{\Omega_x}|\leq C h^{2k+1/2}\sqrt{\norma{\Phi}_{k+1,\Omega}^2+\norma{\mathfrak{F}}_{k+1,\Omega_x}^2+\norma{\mathfrak{D}}_{k+1,\Omega_x}^2}
    	\label{eq:importante_estimate_1}
    \end{equation}
  	
 	Therefore the estimate for the zero-divided difference negative-order norm is given by
 	\begin{gather*}
 		\norma{(f-f_h,\elec-\elec_h,\magn-\magn_h)}_{-(k+1),\Omega}\\=\sup_{\phi\in C_0^{\infty}(\Omega),\mathfrak{F},\mathfrak{D}\in[C^{\infty}(\Omega_x)]^{d_x}}\frac{(f-f_h,\Phi)_{\Omega}+(\elec-\elec_h,\mathfrak{F})_{\Omega_x}+(\magn-\magn_h,\mathfrak{D})_{\Omega_x}}{\sqrt{\norma{\Phi}_{k+1,\Omega}^2+\norma{\mathfrak{F}}_{k+1,\Omega_x}^2+\norma{\mathfrak{D}}_{k+1,\Omega_x}^2}} 
 		\leq C h^{2k+1/2}.
 	\end{gather*}
 	 	
\end{proof}

\section{Numerical Experiments}\label{sec:num_experiments}
In this section, we validate our theoretical results using several numerical tests. In particular, we want to demonstrate the performance of the post-processing technique for the VA system and the VM system. 
We heavily use the fact that the VM (VA) system is time reversible to provide quantitative measurements of the errors. In particular, let $f(\vecx,\vecv,0)$, $\elec(\vecx,0)$, $\magn(\vecx,0)$ denote the initial conditions and $f(\vecx,\vecv,T)$, $\elec(\vecx,T)$, $\magn(\vecx,T)$ be the solution of the VM system at $t=T$. If we choose $f(\vecx,-\vecv,T)$, $\elec(\vecx,T)$, $-\magn(\vecx,T)$ as the initial condition at $t=0$, then evolving the VM system to $t=T,$ we will recover  $f(\vecx,-\vecv,0)$, $\elec(\vecx,0)$, $-\magn(\vecx,0)$.

\subsection{Vlasov-Amp\'{e}re examples}

We consider two classical benchmark examples.
\begin{itemize}
     	\item Landau damping:
     	\begin{equation}
     		f(x,v,0)=f_M(v)(1+A\cos(kx)),\quad x\in[0,L],\,v\in[-V_c,V_c],
     		\label{eq:landau_damping}
     	\end{equation}
     where $A=0.5$, $k=0.5$, $L=4\pi$, $V_c=6\pi$, and $f_M(v)=\frac{1}{\sqrt{2\pi}}e^{-v^2/2}$. 
          	\item Two-stream instability:
     \begin{equation}
     	f(x,v,0)=f_{TS}(v)(1+A\cos(kx)),\quad x\in[0,L],\,v\in[-V_c,V_c],
     	\label{eq:two_stream}
     \end{equation}
     where $A=0.05$, $k=0.5$, $L=4\pi$, $V_c=6\pi$, and \mbox{$f_{TS}(v)=\frac{1}{\sqrt{2\pi}}v^2e^{-v^2/2}$}.
\end{itemize}
Notice that in both examples we have taken $V_c$ to be larger than the usual values in the literature in order to completely eliminate the boundary effects and accurately reflect the accuracy enhancement property. 

In Tables \ref{tab:lamdau_damping_error}, we run the VA system with initial condition from Landau damping to $T=1$ and then back to $T=0$ and then we apply the SIAC filter, and compare it with the initial conditions. 
 We use the third order TVD-RK method as the time integrator \cite{gottlieb1998total}. To make sure the spatial error dominates, we take $\Delta t=\mathrm{CFL}/(V_c/\Delta x+\elec_{\mathrm{max}}/\Delta v)$ for $\mathbb{P}^1$, $\elec_{\mathrm{max}}$ denotes the maximum value of $\elec(\cdot,T)$ in $\Omega_x$, for $\mathbb{P}^2$ we take $\Delta t=\mathrm{CFL}/(V_c/(\Delta x)^{5/3}+E_{max}/(\Delta v)^{5/3})$, and $\Delta t=\mathrm{CFL}/(V_c/(\Delta x)^{7/3}+E_{max}/(\Delta v)^{7/3})$ for $\mathbb{P}^3$. For $\mathbb{P}^1$ and $\mathbb{P}^3$ we take the $\mathrm{CFL}=0.1$, and we take the $\mathrm{CFL}=0.2$ for $\mathbb{P}^2$. 
 From the table, we observe $(k+1)$-th order of convergence for the DG solution before post-processing for both $f$ and $\elec$. We can clearly see that we improve the order of the error  to at least $O(h^{2k+1/2})$ after post-processing. 

In Figure \ref{fig:landau_error_comparision} we plot the errors of the numerical solution before and after post-processing for $\mathbb{P}^1$ and using $128\times 128$ elements. We can see that the errors before post-processing are highly oscillatory, and that the post-processing smooths out the error   and greatly reduces its magnitude. 
In Figure \ref{fig:elec_error_ld}, we plot the errors of the approximations for $\elec$ obtained when solving using a $128\times 128$ mesh with $\mathbb{P}^1$ and $32\times 32$ mesh with $\mathbb{P}^3$. We can clearly see that the errors before post-processing are highly oscillatory, and the post-processing gets rid of the oscillations and dramatically reduces the magnitude of the error.  
 Another point that we want to make is the following: if we look at Table \ref{tab:lamdau_damping_error}, for $k=2$ and a mesh of $64\times 64,$ the $L^2$-errors before and after post-processing are similar in magnitude. However, if we look at Figure \ref{fig:ld_inf_comp} which plots the absolute value of the error in $f$ in this case, we can clearly see that the $L^\infty$-norm  of the error of the filtered solution is much smaller than the unfiltered solution. Therefore, by removing the spurious oscillations, even if the $L^2$-error is comparable, the $L^\infty$ error is further reduced by the post-processor. This is probably due to the high oscillatory nature of the solution.
 

\begin{table}[!htbp]
\centering
\begin{tabular}{|ccccccccc|}
\hline
\multicolumn{1}{|c|}{}                & \multicolumn{4}{c|}{Before post-processing}                                                                                        & \multicolumn{4}{c|}{After post-processing}                                                                        \\ \hline
\multicolumn{1}{|c|}{mesh}            & \multicolumn{1}{c|}{error $f$} & \multicolumn{1}{c|}{order} & \multicolumn{1}{c|}{error $\mathbf{E}$} & \multicolumn{1}{c|}{order} & \multicolumn{1}{c|}{error $f^*$} & \multicolumn{1}{c|}{order} & \multicolumn{1}{c|}{error $\mathbf{E}^*$} & order \\ \hline
\multicolumn{9}{|c|}{$\mathbb{P}^1$}                                                                                                                                                                                                                                                           \\ \hline
\multicolumn{1}{|c|}{$16\times 16$}   & \multicolumn{1}{c|}{1.42E-02}  & \multicolumn{1}{c|}{-}     & \multicolumn{1}{c|}{1.19E-02}           & \multicolumn{1}{c|}{-}     & \multicolumn{1}{c|}{2.28E-02}    & \multicolumn{1}{c|}{-}     & \multicolumn{1}{c|}{1.04E-02}             & -     \\
\multicolumn{1}{|c|}{$32\times 32$}   & \multicolumn{1}{c|}{6.22E-03}  & \multicolumn{1}{c|}{1.19}  & \multicolumn{1}{c|}{3.16E-03}           & \multicolumn{1}{c|}{1.91}  & \multicolumn{1}{c|}{6.16E-03}    & \multicolumn{1}{c|}{1.89}  & \multicolumn{1}{c|}{2.84E-03}             & 1.88  \\
\multicolumn{1}{|c|}{$64\times 64$}   & \multicolumn{1}{c|}{1.59E-03}  & \multicolumn{1}{c|}{1.97}  & \multicolumn{1}{c|}{5.65E-04}           & \multicolumn{1}{c|}{2.48}  & \multicolumn{1}{c|}{8.74E-04}    & \multicolumn{1}{c|}{2.82}  & \multicolumn{1}{c|}{4.36E-04}             & 2.70  \\
\multicolumn{1}{|c|}{$128\times 128$} & \multicolumn{1}{c|}{4.08E-04}  & \multicolumn{1}{c|}{1.96}  & \multicolumn{1}{c|}{1.12E-04}           & \multicolumn{1}{c|}{2.33}  & \multicolumn{1}{c|}{1.10E-04}    & \multicolumn{1}{c|}{2.99}  & \multicolumn{1}{c|}{6.31E-05}             & 2.79  \\
\multicolumn{1}{|c|}{$256\times 256$} & \multicolumn{1}{c|}{1.03E-04}  & \multicolumn{1}{c|}{1.98}  & \multicolumn{1}{c|}{2.51E-05}           & \multicolumn{1}{c|}{2.16}  & \multicolumn{1}{c|}{1.37E-05}    & \multicolumn{1}{c|}{3.00}  & \multicolumn{1}{c|}{9.01E-06}             & 2.81  \\
\multicolumn{1}{|c|}{$512\times 512$} & \multicolumn{1}{c|}{2.60E-05}  & \multicolumn{1}{c|}{1.99}  & \multicolumn{1}{c|}{6.14E-06}           & \multicolumn{1}{c|}{2.03}  & \multicolumn{1}{c|}{1.71E-06}    & \multicolumn{1}{c|}{3.00}  & \multicolumn{1}{c|}{1.71E-06}             & 2.39  \\ \hline
\multicolumn{9}{|c|}{$\mathbb{P}^2$}                                                                                                                                                                                                                                                           \\ \hline
\multicolumn{1}{|c|}{$16\times 16$}   & \multicolumn{1}{c|}{7.08E-03}  & \multicolumn{1}{c|}{-}     & \multicolumn{1}{c|}{1.97E-03}           & \multicolumn{1}{c|}{-}     & \multicolumn{1}{c|}{2.09E-02}    & \multicolumn{1}{c|}{}      & \multicolumn{1}{c|}{1.88E-03}             & -     \\
\multicolumn{1}{|c|}{$32\times 32$}   & \multicolumn{1}{c|}{1.08E-03}  & \multicolumn{1}{c|}{2.71}  & \multicolumn{1}{c|}{1.13E-04}           & \multicolumn{1}{c|}{4.12}  & \multicolumn{1}{c|}{2.87E-03}    & \multicolumn{1}{c|}{2.87}  & \multicolumn{1}{c|}{1.08E-04}             & 4.12  \\
\multicolumn{1}{|c|}{$64\times 64$}   & \multicolumn{1}{c|}{1.35E-04}  & \multicolumn{1}{c|}{3.00}  & \multicolumn{1}{c|}{6.62E-06}           & \multicolumn{1}{c|}{4.10}  & \multicolumn{1}{c|}{1.20E-04}    & \multicolumn{1}{c|}{4.58}  & \multicolumn{1}{c|}{5.15E-06}             & 4.39  \\
\multicolumn{1}{|c|}{$128\times 128$} & \multicolumn{1}{c|}{1.63E-05}  & \multicolumn{1}{c|}{3.04}  & \multicolumn{1}{c|}{5.59E-07}           & \multicolumn{1}{c|}{3.57}  & \multicolumn{1}{c|}{2.70E-06}    & \multicolumn{1}{c|}{5.47}  & \multicolumn{1}{c|}{2.04E-07}             & 4.66  \\
\multicolumn{1}{|c|}{$256\times 256$} & \multicolumn{1}{c|}{2.01E-06}  & \multicolumn{1}{c|}{3.03}  & \multicolumn{1}{c|}{6.57E-08}           & \multicolumn{1}{c|}{3.09}  & \multicolumn{1}{c|}{5.29E-08}    & \multicolumn{1}{c|}{5.67}  & \multicolumn{1}{c|}{5.75E-09}             & 5.15  \\ \hline
\multicolumn{9}{|c|}{$\mathbb{P}^3$}                                                                                                                                                                                                                                                           \\ \hline
\multicolumn{1}{|c|}{$16\times 16$}   & \multicolumn{1}{c|}{1.73E-03}  & \multicolumn{1}{c|}{-}     & \multicolumn{1}{c|}{2.19E-04}           & \multicolumn{1}{c|}{-}     & \multicolumn{1}{c|}{2.16E-02}    & \multicolumn{1}{c|}{-}     & \multicolumn{1}{c|}{9.71E-05}             & -     \\
\multicolumn{1}{|c|}{$32\times 32$}   & \multicolumn{1}{c|}{1.52E-04}  & \multicolumn{1}{c|}{3.51} & \multicolumn{1}{c|}{7.18E-06}           & \multicolumn{1}{c|}{4.93} & \multicolumn{1}{c|}{2.60E-03}    & \multicolumn{1}{c|}{3.05}  & \multicolumn{1}{c|}{3.09E-06}             & 4.97 \\
\multicolumn{1}{|c|}{$64\times 64$}   & \multicolumn{1}{c|}{1.06E-05}  & \multicolumn{1}{c|}{3.84}  & \multicolumn{1}{c|}{1.30E-07}           & \multicolumn{1}{c|}{5.79}  & \multicolumn{1}{c|}{5.65E-05}    & \multicolumn{1}{c|}{5.52}  & \multicolumn{1}{c|}{7.52E-08}             & 5.36  \\
\multicolumn{1}{|c|}{$128\times 128$} & \multicolumn{1}{c|}{6.45E-07}  & \multicolumn{1}{c|}{4.04}  & \multicolumn{1}{c|}{3.42E-09}           & \multicolumn{1}{c|}{5.25}  & \multicolumn{1}{c|}{3.95E-07}    & \multicolumn{1}{c|}{7.16}  & \multicolumn{1}{c|}{8.24E-10}             & 6.51  \\
\hline
\end{tabular}
\caption{$L^2$ errors for the numerical solution and the post-processed solution for Landau Damping.}
\label{tab:lamdau_damping_error}
\end{table}

\begin{figure}[!htbp]
	\centering
	\includegraphics[width=\textwidth]{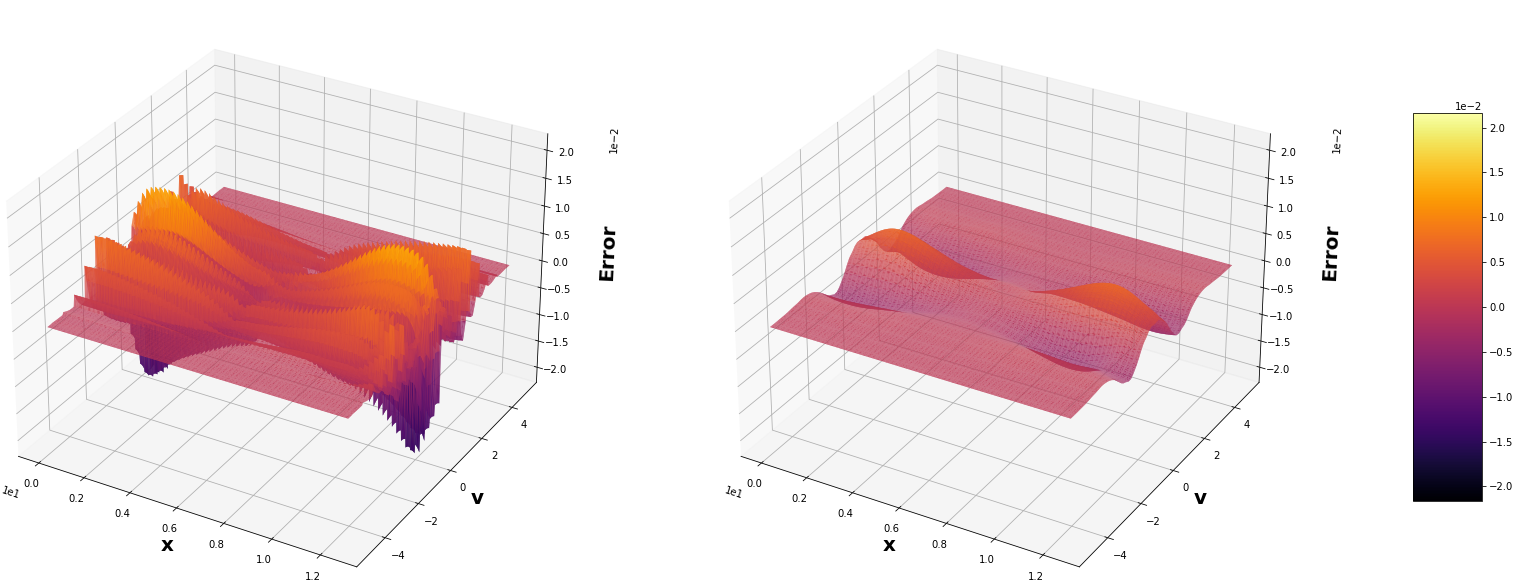}
	\caption{Errors for $f$ before (on the left) and after post-processing (on the right) for $128\times 128$ elements and $\mathbb{P}^1.$ Landau damping.}
	\label{fig:landau_error_comparision}
\end{figure}

\begin{figure}[!htbp]
	\centering
	\begin{subfigure}[b]{0.48\textwidth}
		\centering
		\includegraphics[width=\textwidth]{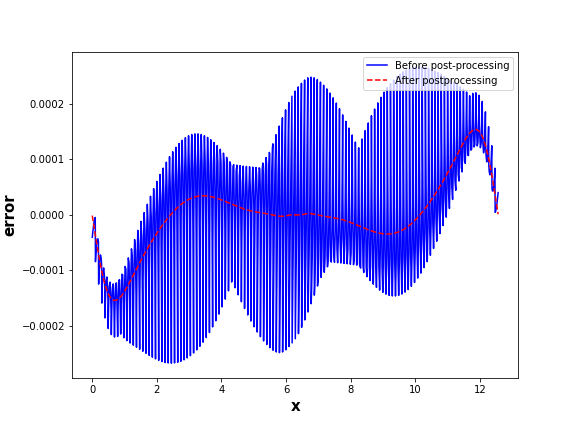}
		\caption{{\small $128\times 128$ and $\mathbb{P}^1$}}  
	\end{subfigure}
	\begin{subfigure}[b]{0.48\textwidth}  
		\centering 
		\includegraphics[width=\textwidth]{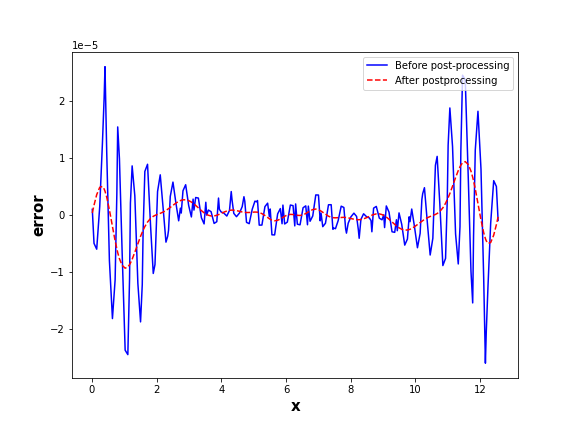}
		\caption{{\small $32\times 32$ and $\mathbb{P}^3$}}  
	\end{subfigure}
	\caption{\small Errors before (solid line) and after post-processing (dashed line) for $\elec$ for different mesh sizes and $\mathbb{P}^k$.  Landau damping. $T=2$.} 
	\label{fig:elec_error_ld} 
\end{figure}   

\begin{figure}[!htbp]
	\centering
	\includegraphics[width=\textwidth]{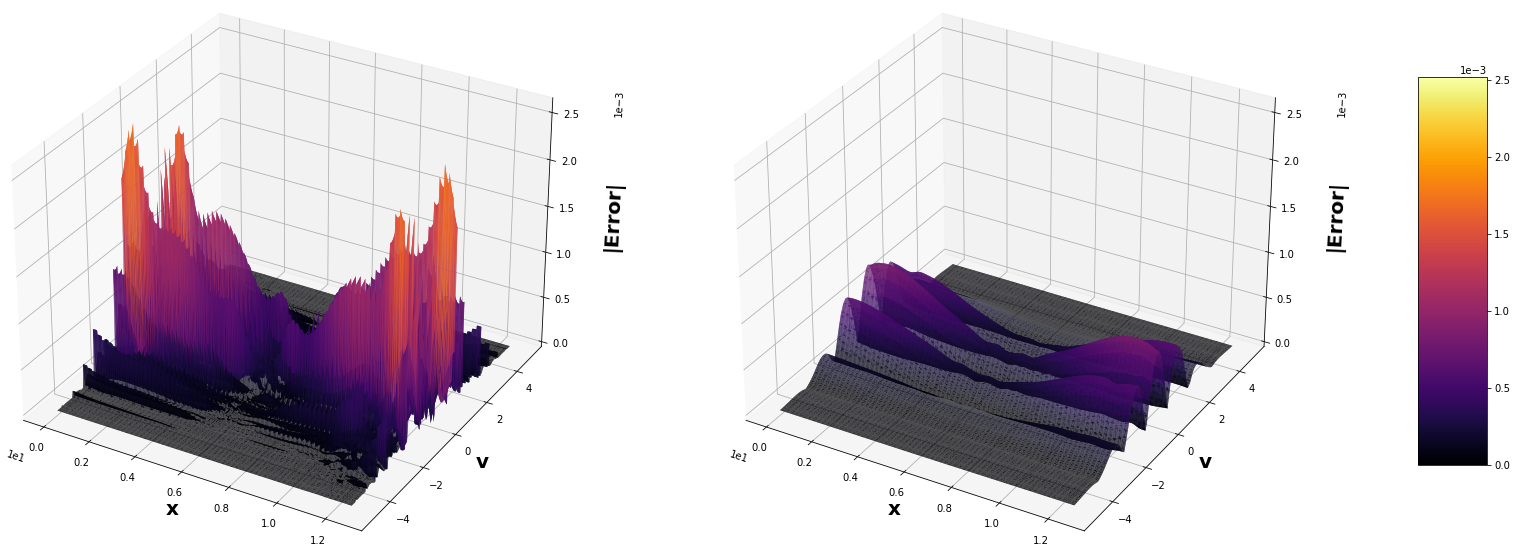}
	\caption{Absolute value of errors  for $f$ before (on the left) and after post-processing (on the right) for $64 \times 64$ elements and $\mathbb{P}^2.$ Landau damping.}	\label{fig:ld_inf_comp}
\end{figure}

Now we provide plots comparing the solution profile before and after post-processing for a longer computational time. To compute those plots, we   use a third-order Runge-Kutta method with $\Delta t=\mathrm{CFL}/(V_c/\Delta x+E_{max}/\Delta v))$ and $\mathrm{CFL}=0.1.$  In Figures \ref{fig:con_ld_k1_t10} to \ref{fig:con_ts_k2_t20}, we show a comparison of contour plots of the numerical solution for $f$ before and after post-processing with different mesh size and  $k=1,2.$   There is visible improvement of the resolution of the solution, particularly for   $k=1$. We also plot
 the macroscopic quantities: particle density and current density of the results for Landau-Damping with $k=1$ and $T=10$ on a $32 \times 32$ mesh before and after post-processing in Figure \ref{fig:pp-density}. It is clear that the spurious oscillations in those macroscopic quantities are removed by the filter.
 


\begin{figure}[!htbp]
	\centering
	\begin{subfigure}[b]{0.7\textwidth}
		\centering
		\includegraphics[width=\textwidth]{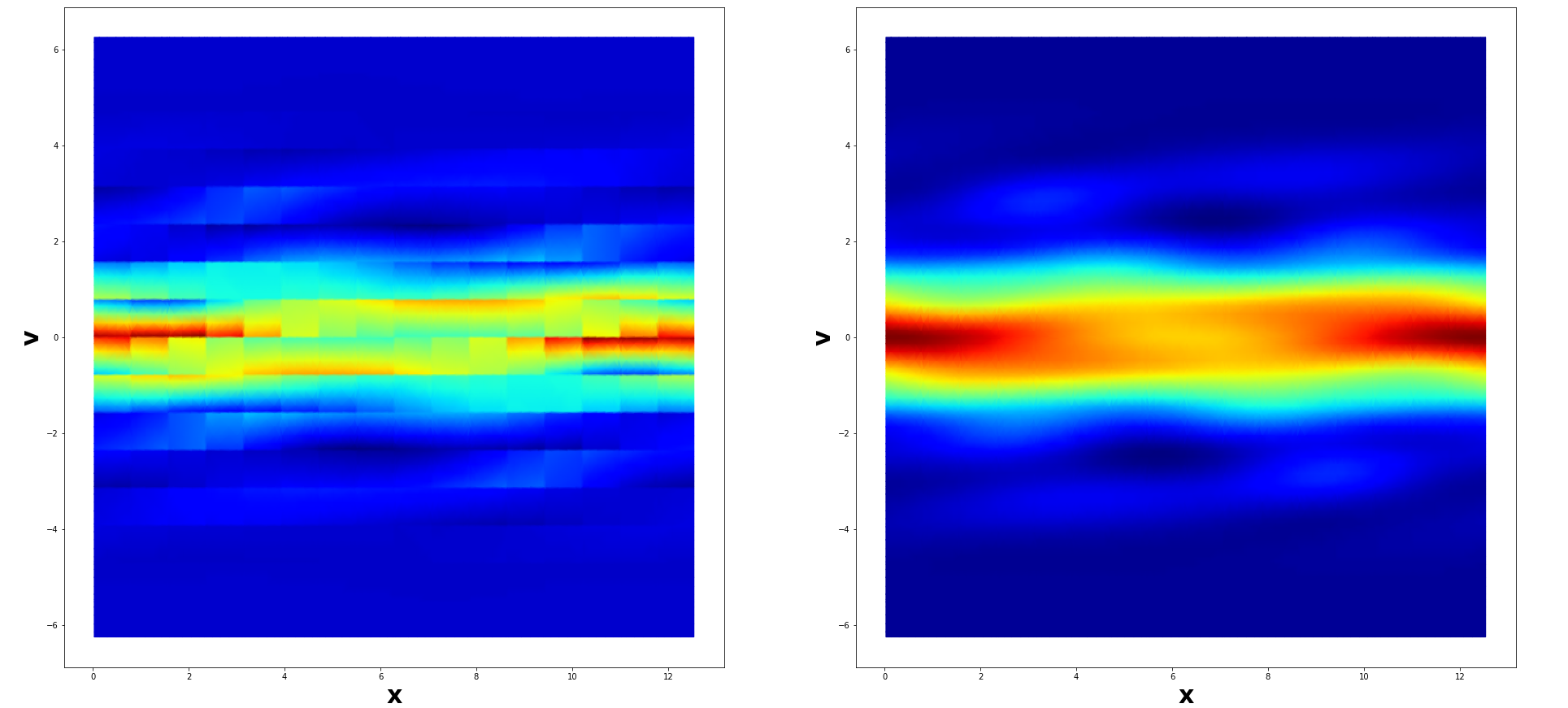}
		\caption{{\small $16\times 16$}}    
		\label{fig:con_ld_k1_t10_16}
	\end{subfigure}
	\vskip\baselineskip
	\begin{subfigure}[b]{0.7\textwidth}  
		\centering 
		\includegraphics[width=\textwidth]{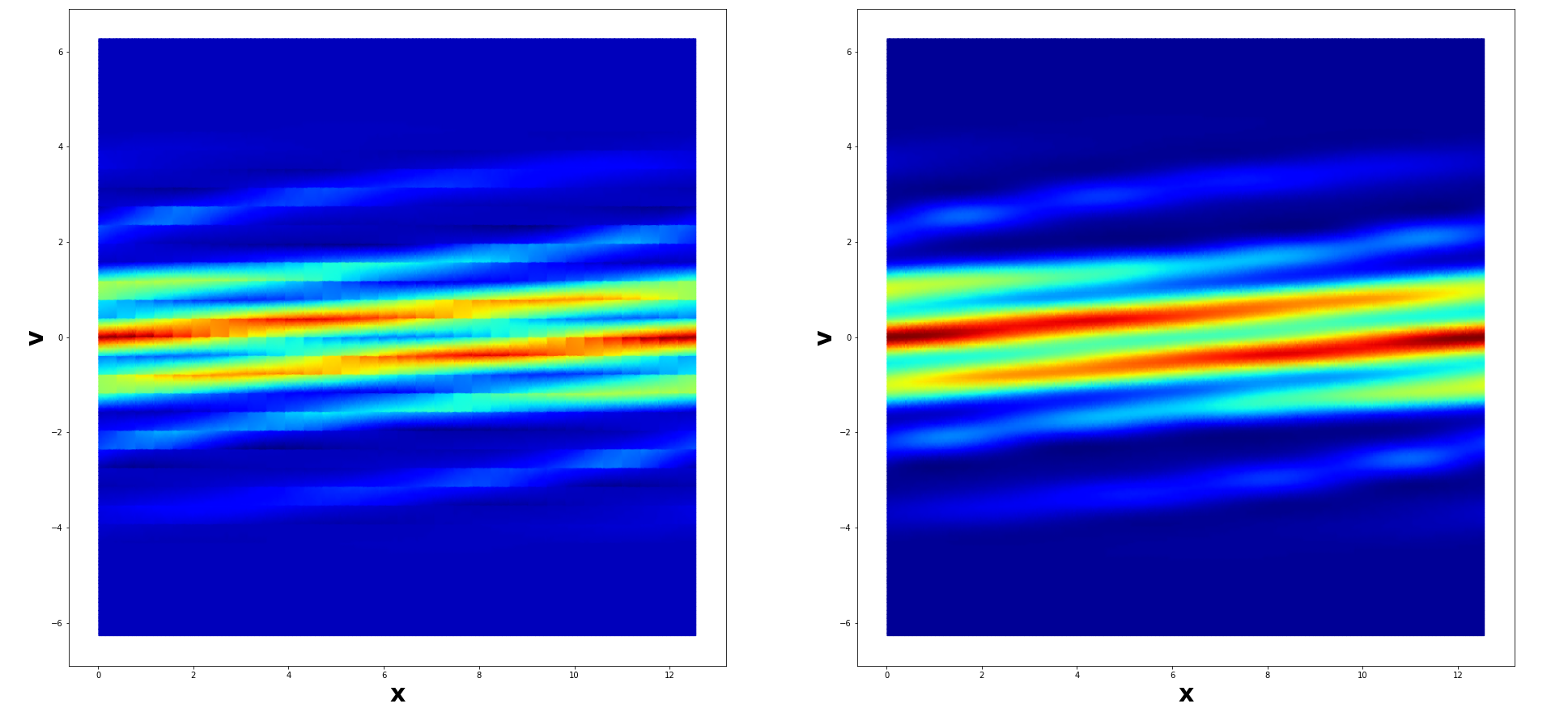}
		\caption{{\small $32 \times 32$}}    
		\label{fig:con_ld_k1_t10_32}
	\end{subfigure}
	\vskip\baselineskip
	\begin{subfigure}[b]{0.7\textwidth}  
		\centering 
		\includegraphics[width=\textwidth]{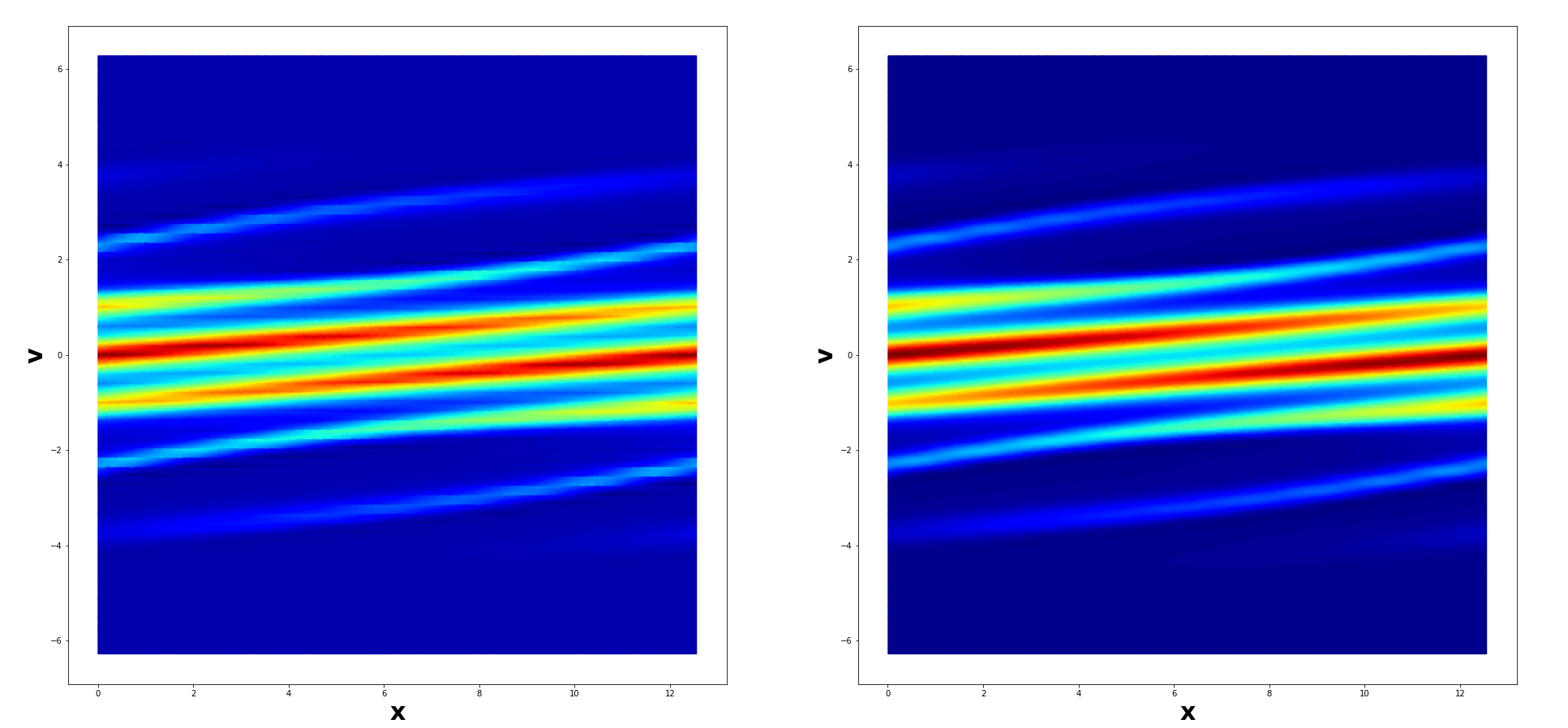}
		\caption{{\small $64 \times 64$}}    
		\label{fig:con_ld_k1_t10_64}
	\end{subfigure}
	\caption{\small Comparison of   contour plots before (left) and after post-processing (right) for different mesh-sizes. Landau damping, $k=1$ and $T=10.$} 
	\label{fig:con_ld_k1_t10} 
\end{figure}   
\begin{figure}[!htbp]
	\centering
	\begin{subfigure}[b]{0.7\textwidth}
		\centering
		\includegraphics[width=\textwidth]{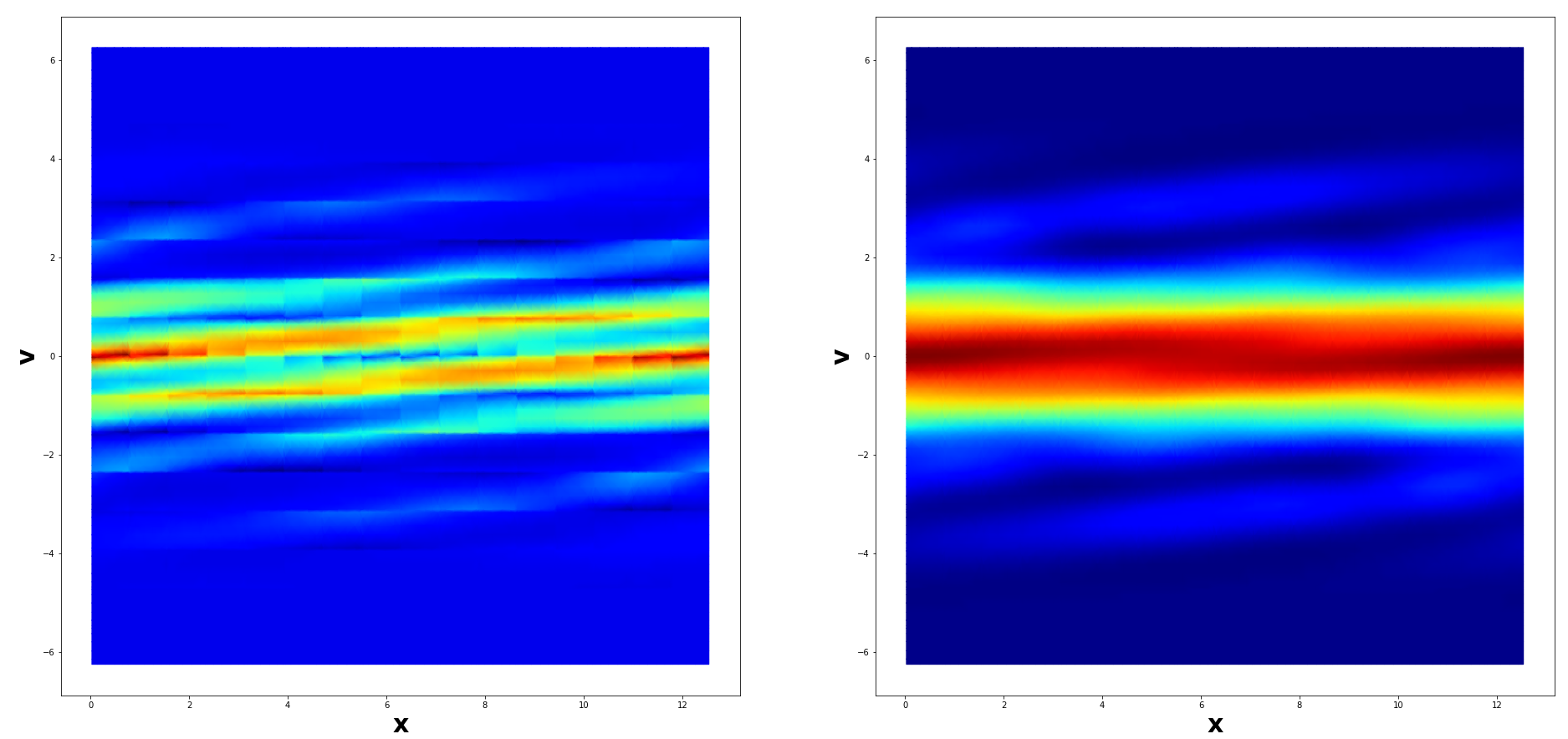}
		\caption{{\small $16\times 16$}}    
		\label{fig:con_ld_k2_t10_16}
	\end{subfigure}
	\vskip\baselineskip
	\begin{subfigure}[b]{0.7\textwidth}  
		\centering 
		\includegraphics[width=\textwidth]{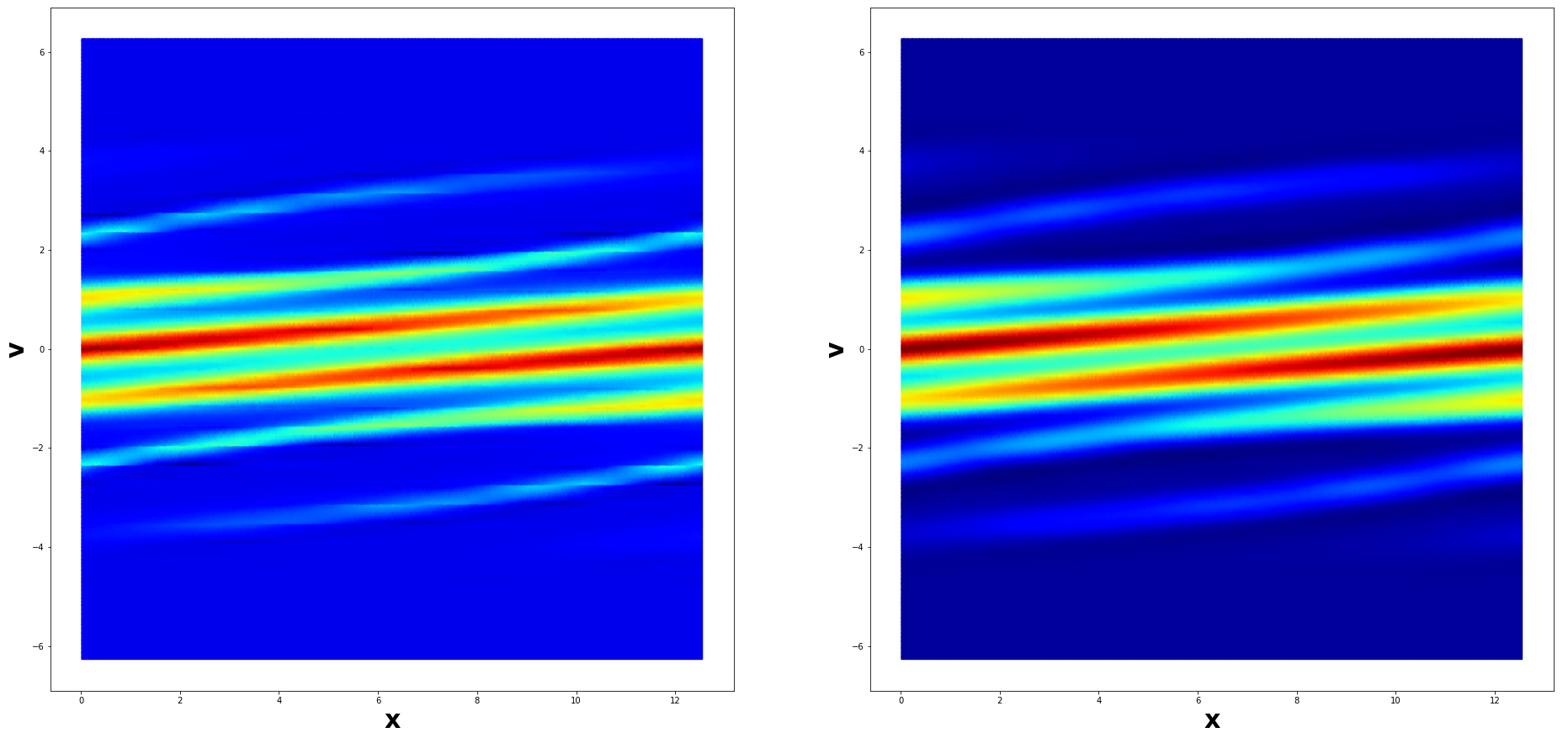}
		\caption{{\small $32 \times 32$}}    
		\label{fig:con_ld_k2_t10_32}
	\end{subfigure}
	\vskip\baselineskip
	\begin{subfigure}[b]{0.7\textwidth}  
		\centering 
		\includegraphics[width=\textwidth]{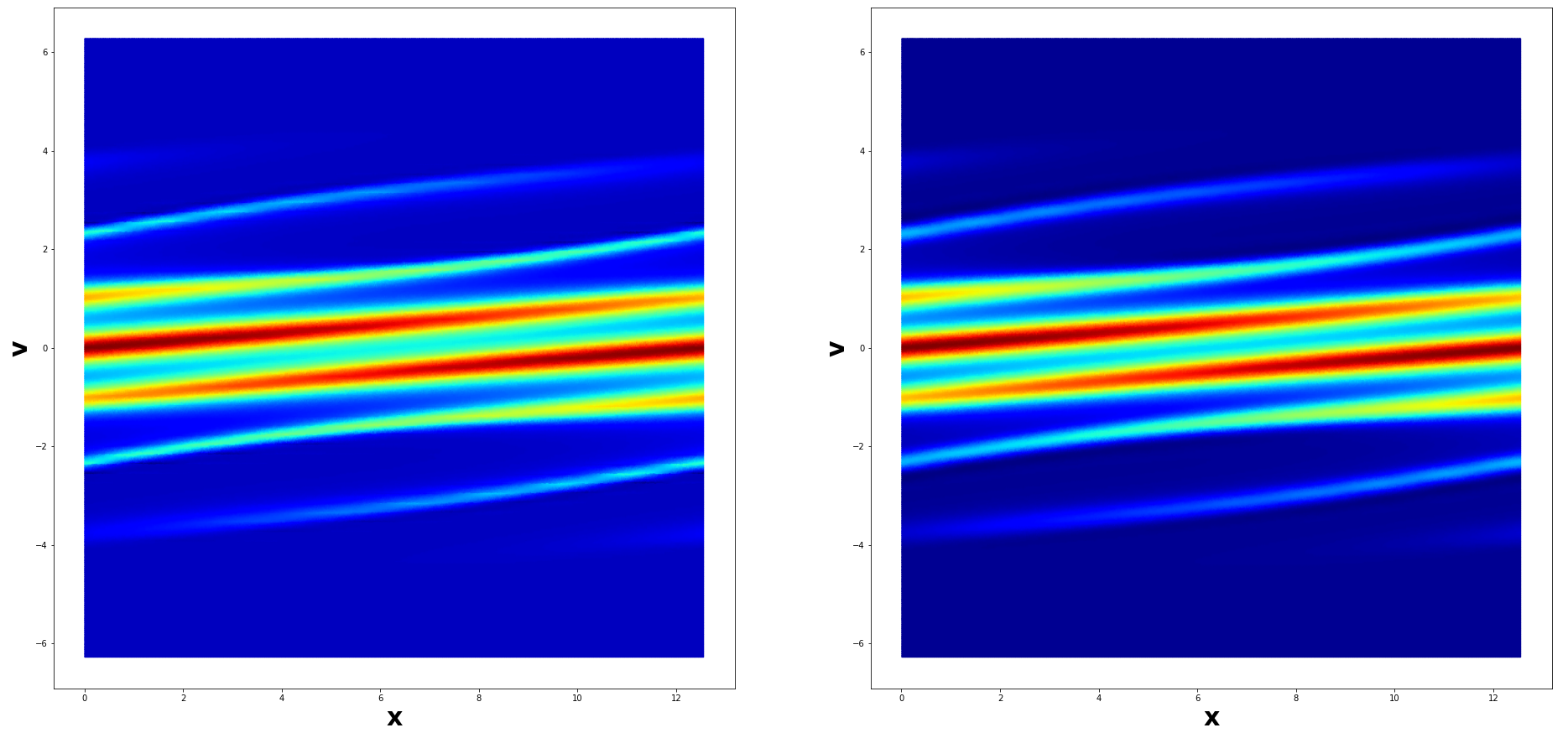}
		\caption{{\small $64 \times 64$}}    
		\label{fig:con_ld_k2_t10_64}
	\end{subfigure}
	\caption{\small Comparison of the definition of the contour plots before (left) and after post-processing (right) for different mesh-sizes. Landau damping, $k=2$ and $T=10.$} 
	\label{fig:con_ld_k2_t10} 
\end{figure}  

\begin{figure}[!htbp]
	\centering
	\begin{subfigure}[b]{0.7\textwidth}
		\centering
		\includegraphics[width=\textwidth]{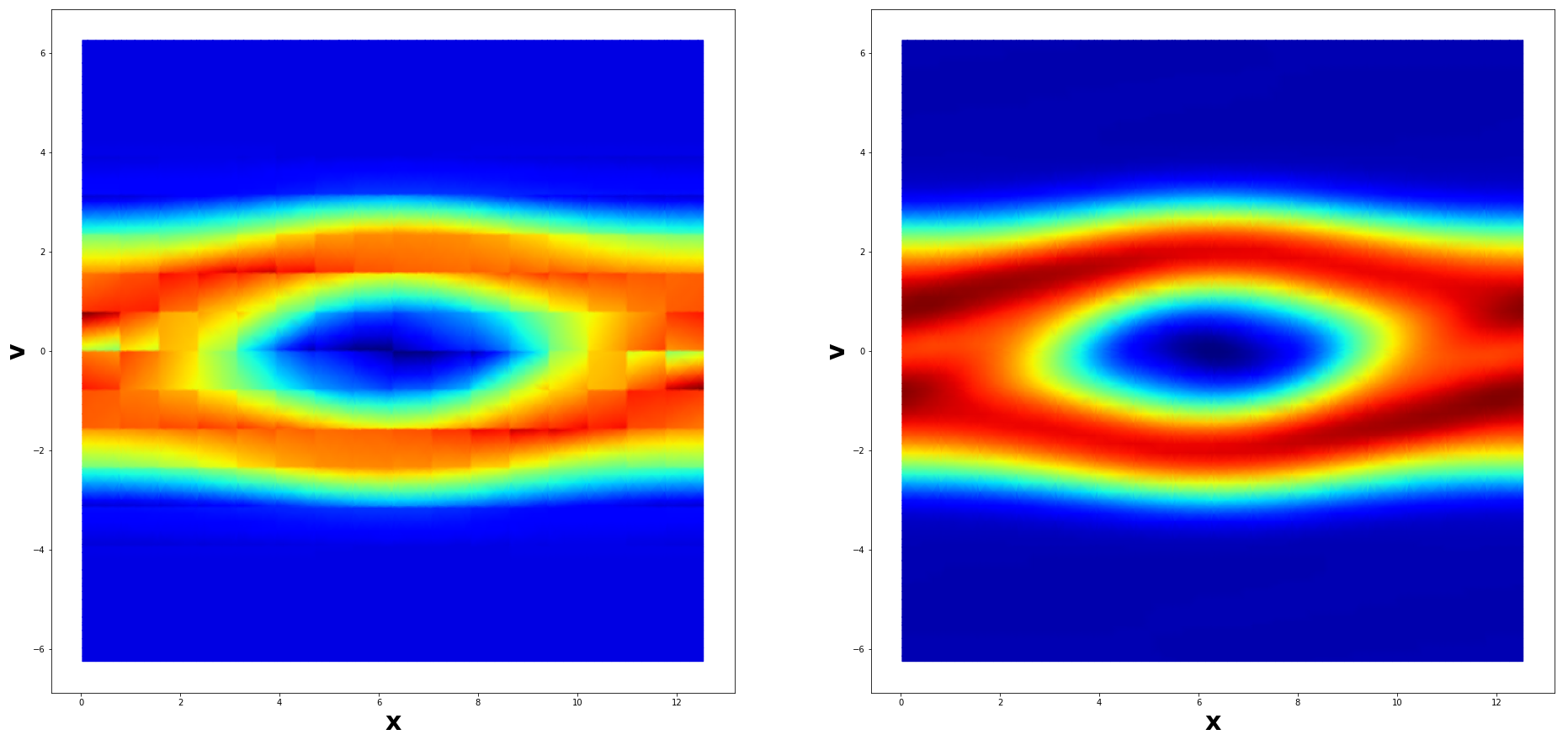}
		\caption{{\small $16\times 16$}}    
		\label{fig:con_ts_k1_t20_16}
	\end{subfigure}
	\vskip\baselineskip
	\begin{subfigure}[b]{0.7\textwidth}  
		\centering 
		\includegraphics[width=\textwidth]{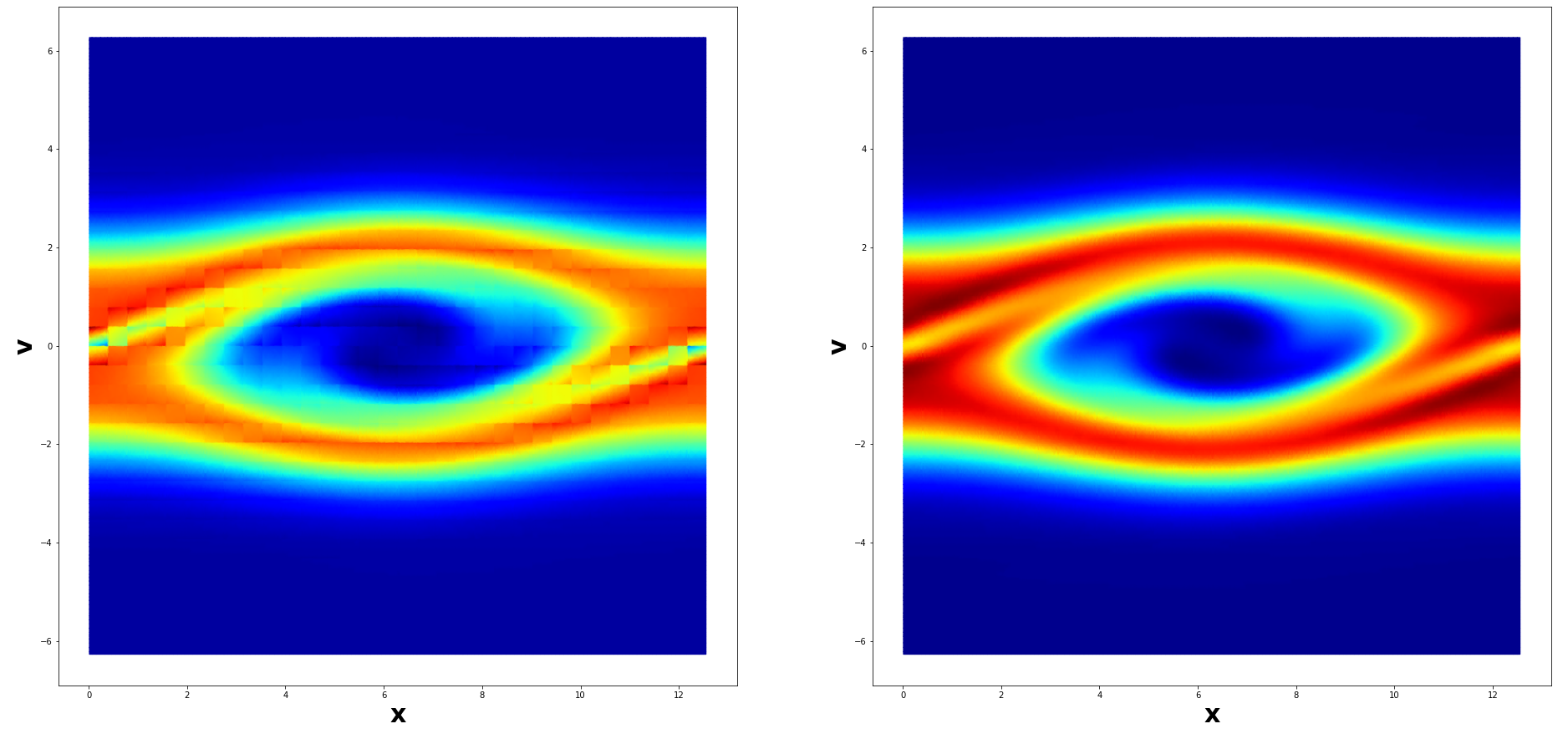}
		\caption{{\small $32\times 32$}}    
		\label{fig:con_ts_k1_t20_32}
	\end{subfigure}
	\vskip\baselineskip
	\begin{subfigure}[b]{0.7\textwidth}  
		\centering 
		\includegraphics[width=\textwidth]{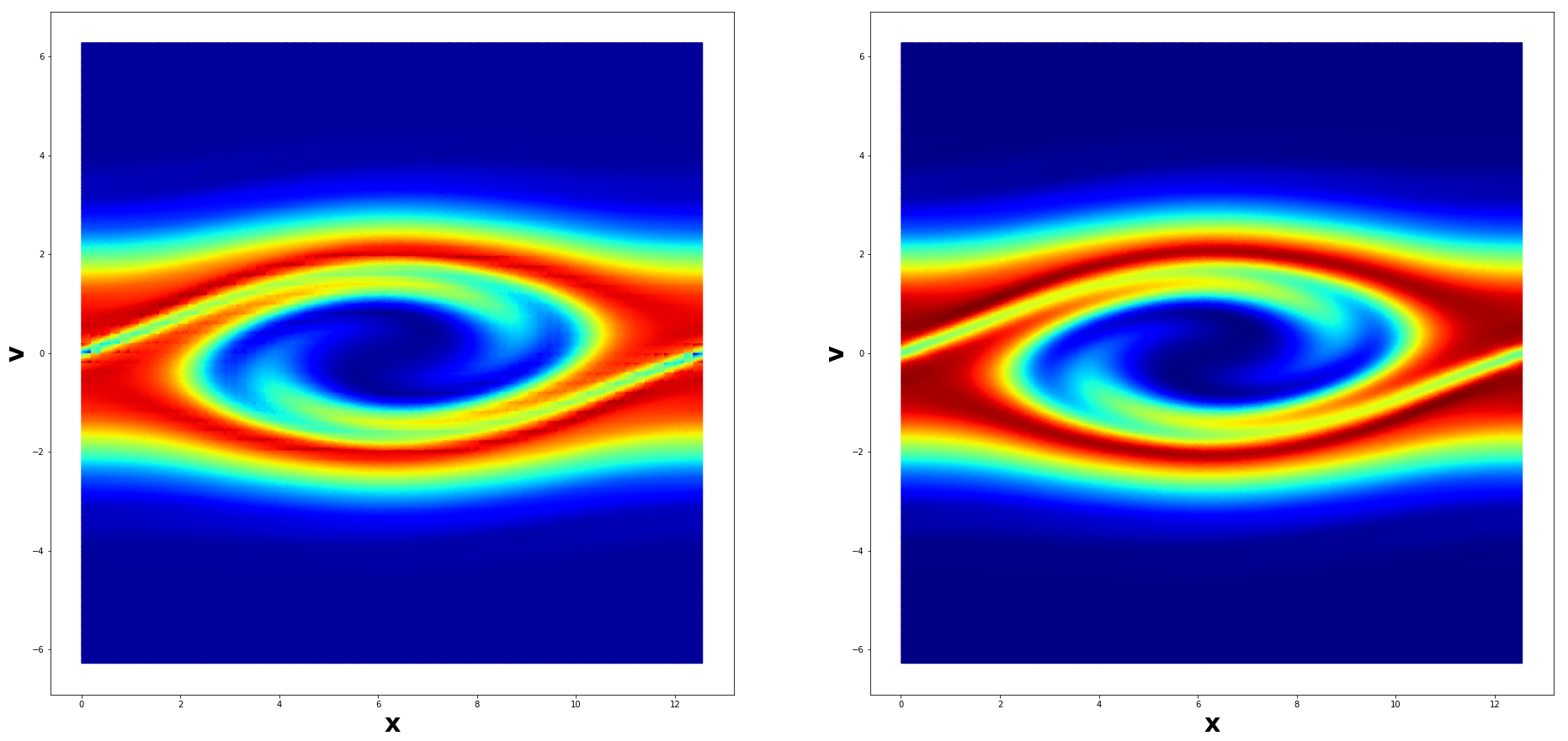}
		\caption{{\small $64 \times 64$}}    
		\label{fig:con_ts_k1_t20_64}
	\end{subfigure}
	\caption{\small Comparison of contour plots before (left) and after post-processing (right) for different mesh-sizes. Two stream instability, $k=1$ and $T=20.$} 
	\label{fig:con_ts_k1_t20} 
\end{figure}   

\begin{figure}[!htbp]
	\centering
	\begin{subfigure}[b]{0.7\textwidth}
		\centering
		\includegraphics[width=\textwidth]{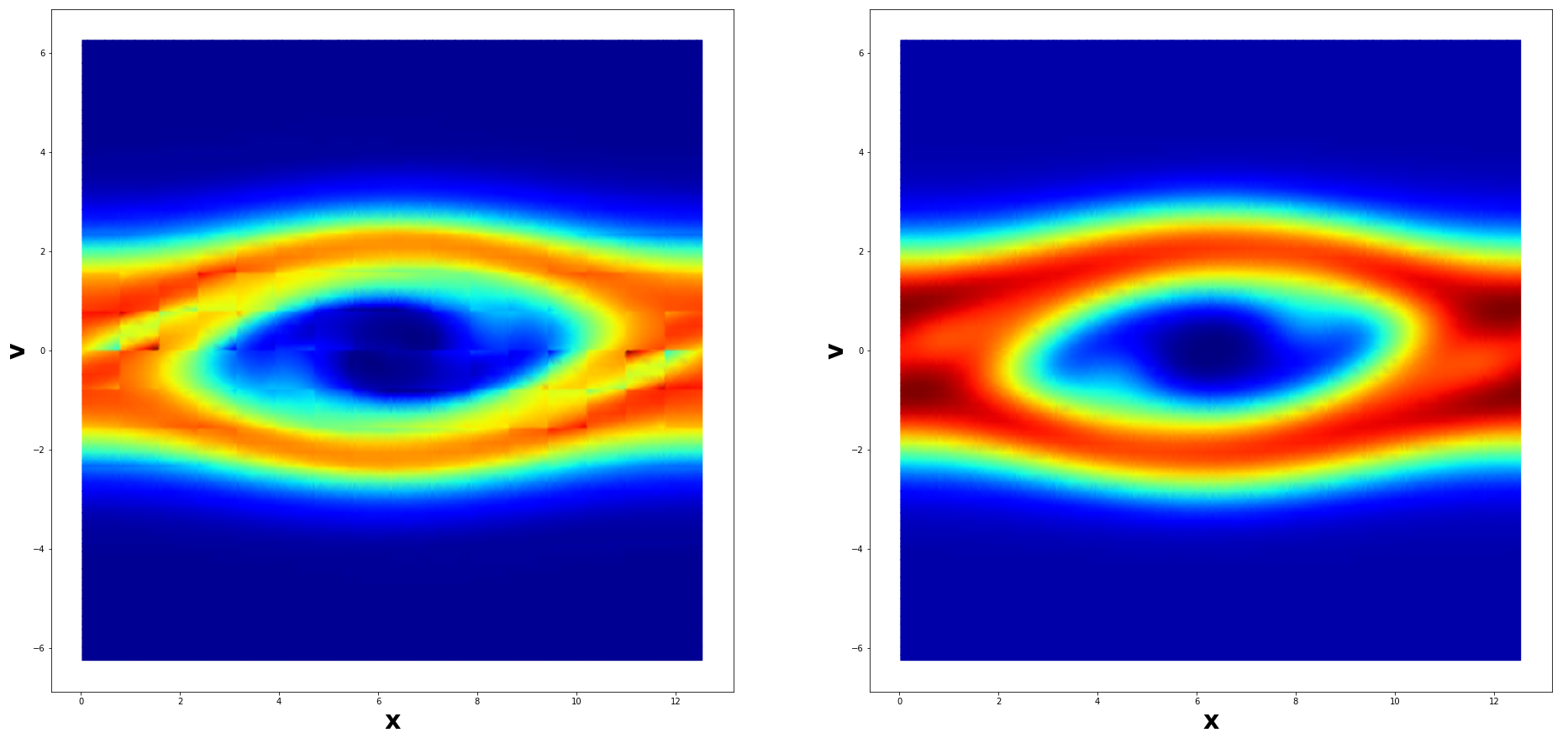}
		\caption{{\small $16\times 16$}}    
		\label{fig:con_ts_k2_t20_16}
	\end{subfigure}
	\vskip\baselineskip
	\begin{subfigure}[b]{0.7\textwidth}  
		\centering 
		\includegraphics[width=\textwidth]{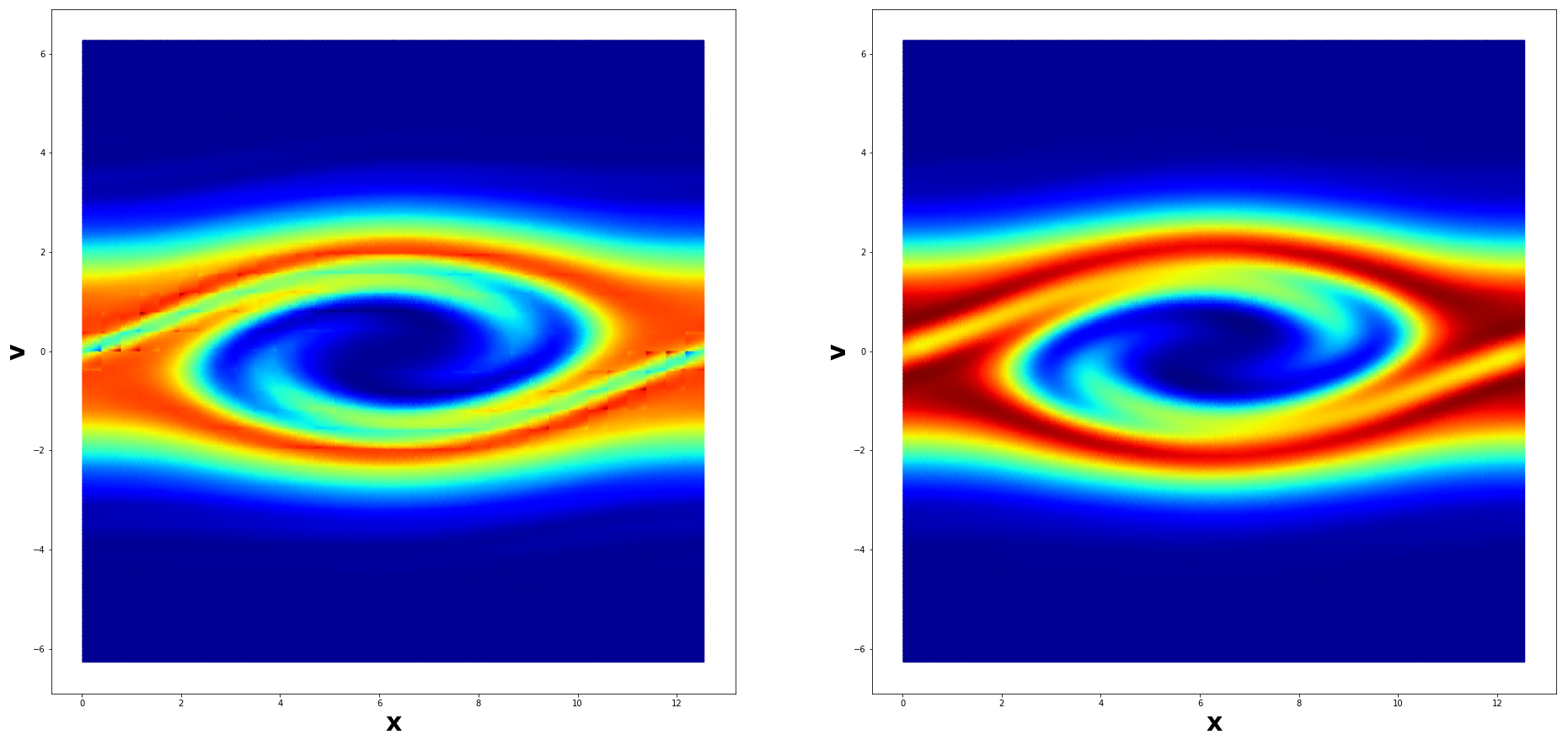}
		\caption{{\small $32\times 32$}}    
		\label{fig:con_ts_k2_t20_32}
	\end{subfigure}
	\vskip\baselineskip
	\begin{subfigure}[b]{0.7\textwidth}  
		\centering 
		\includegraphics[width=\textwidth]{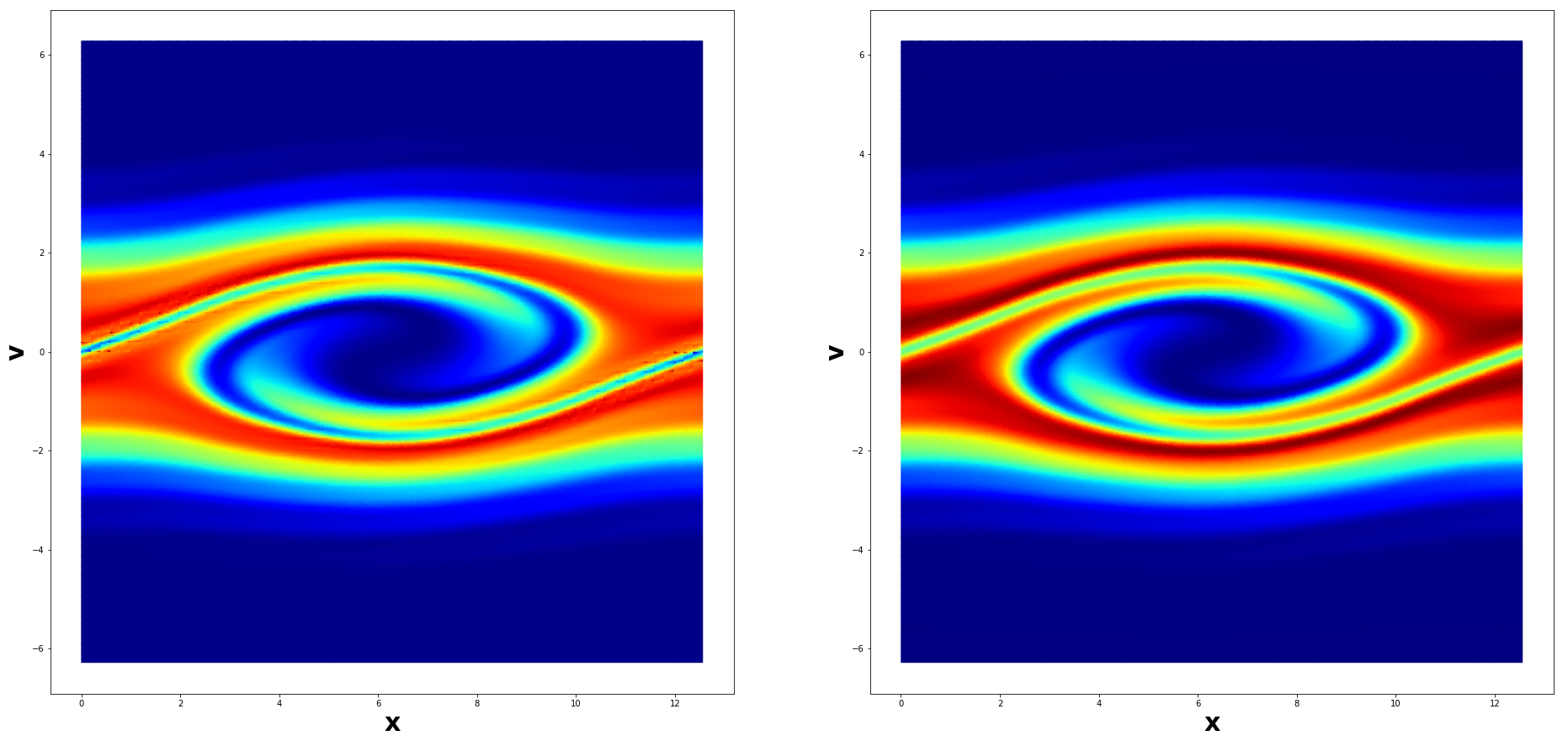}
		\caption{{\small $64 \times 64$}}    
		\label{fig:con_ts_k2_t20_64}
	\end{subfigure}
	\caption{\small Comparison of the definition of the contour plots before (left) and after post-processing (right) for different mesh-sizes. Two stream instability, $k=2$ and $T=20.$} 
	\label{fig:con_ts_k2_t20} 
\end{figure}   

\begin{figure}[!htbp]
	\centering
	\begin{subfigure}[b]{\textwidth}
		\centering
		\includegraphics[width=0.8\textwidth]{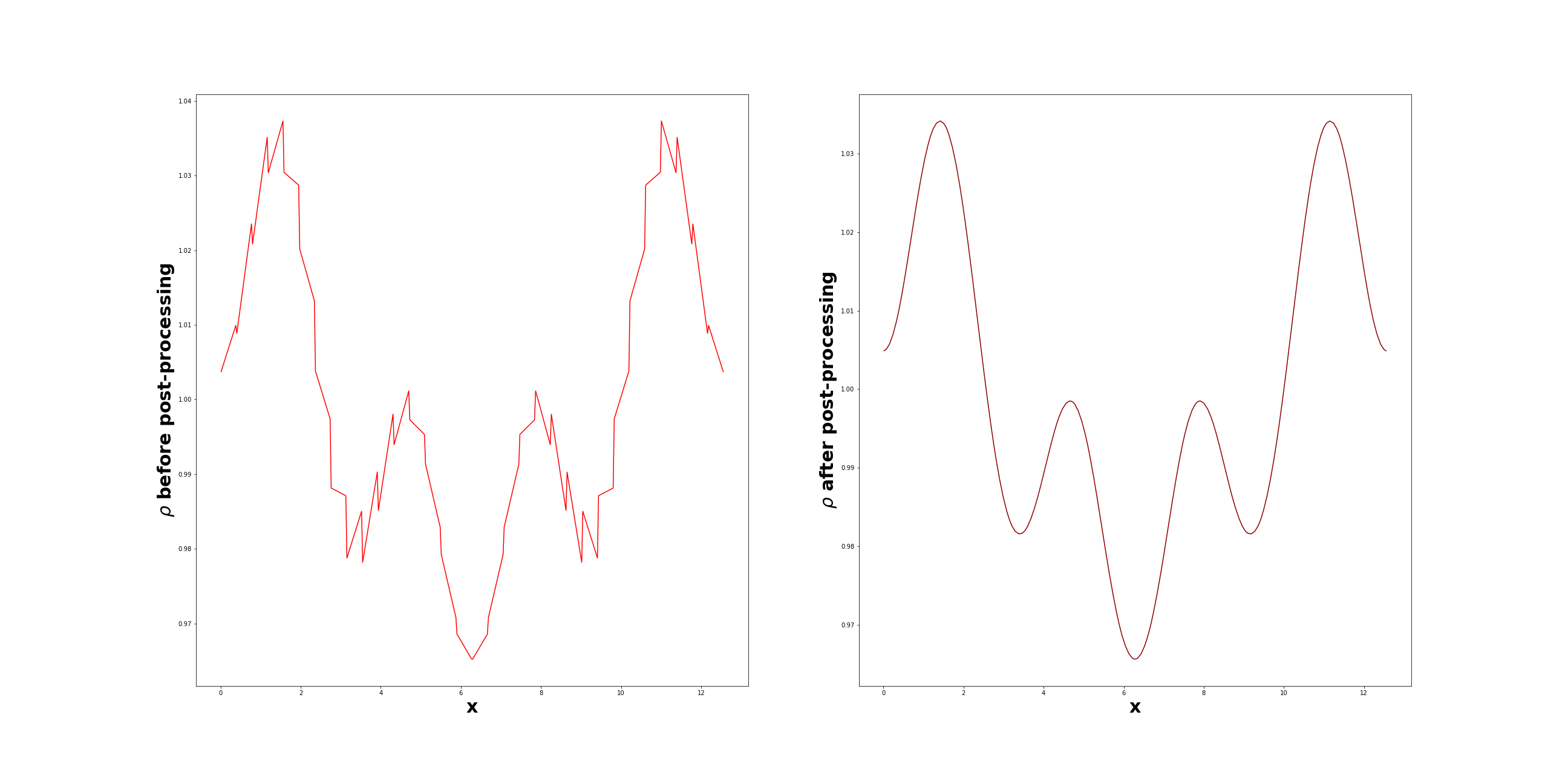}
		\caption{Particle density $\rho$.}
		\label{fig:particle_density}
	\end{subfigure}
	\vskip\baselineskip
	\begin{subfigure}[b]{\textwidth}  
		\centering
	\includegraphics[width=0.8\textwidth]{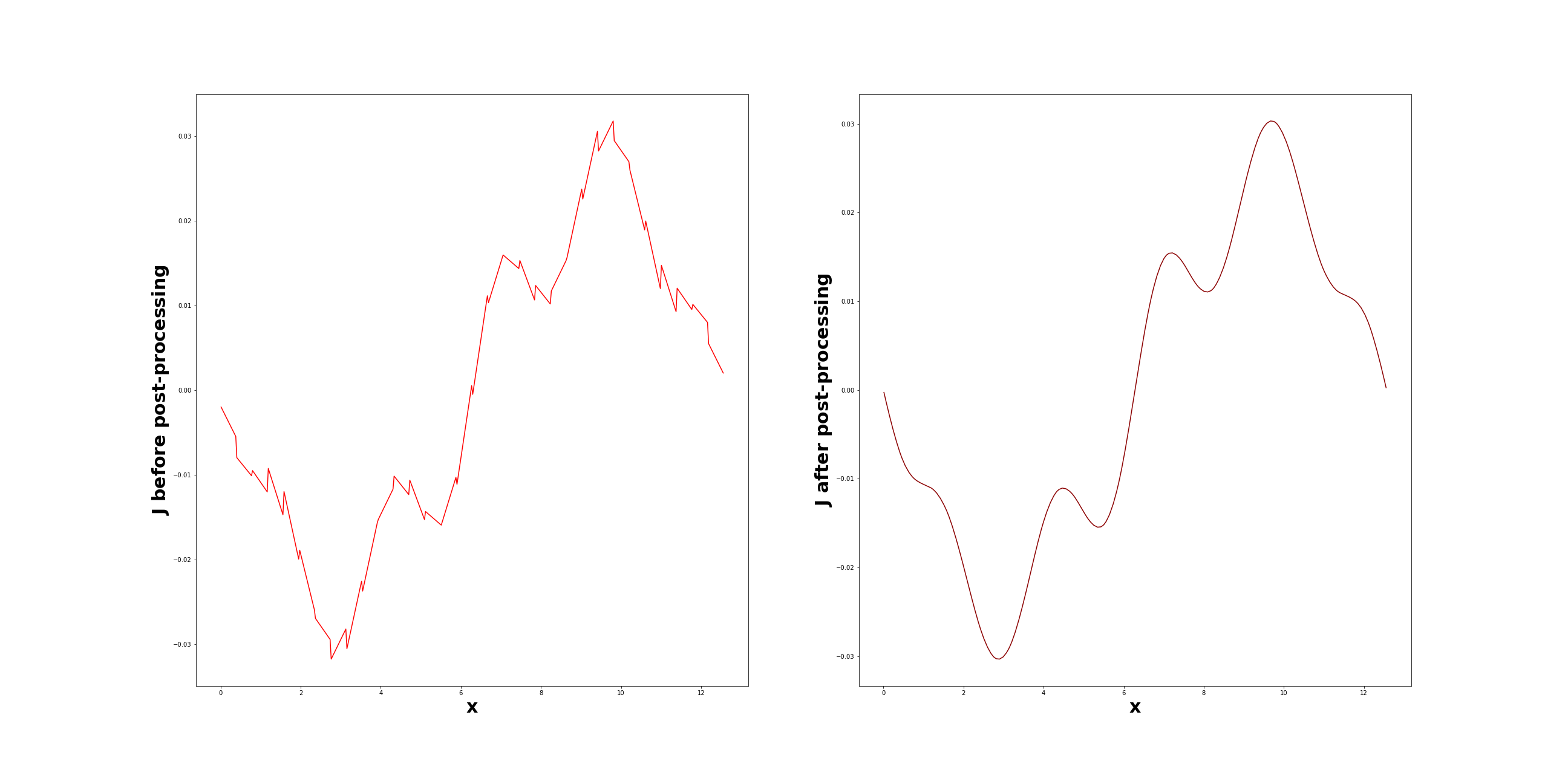}
	\caption{Current density $\mathbf{J}$.}
	\label{fig:current_density}
	\end{subfigure}
	\caption{\small Comparison of density plots for $\rho$ and $\mathbf{J}$ before (left) and after post-processing (right). Landau Damping $k=1$, mesh $32\times 32$. $T=10.$} 
	\label{fig:pp-density} 
\end{figure}  

%
\subsection{Vlasov-Maxwell example}

In this part, we will test our post-processor for the   VM system. Specifically we will use the streaming Weibel (SW) instability as an example. This is a reduced version of the VM equations with one spatial variable, $x_2$, and two velocity variables $v_1$ and $v_2.$ The  variables under consideration are the distribution function $f(x_2,v_1,v_2,t)$, a 2D electric field $\elec=(E_1(x_2,t),E_2(x_2,t),0)$ and a 1D magnetic field $\magn=(0,0,B_3(x_2,t))$ and the reduced VM system reads as
	\begin{subequations}\label{eq:wi_system}
	\begin{gather}
		\partial_t f+v_2 f_{x_2}+(E_1+v_2B_3)f_{v_1}+(E_2-v_1 B_3)f_{v_2}=0,\label{eq:wi_main_eq}\\
		\frac{\partial B_3}{\partial t}=\frac{\partial E_1}{\partial x_2},\quad\frac{\partial E_1}{\partial t}=\frac{\partial B_3}{\partial x_2}-j_1,\quad \frac{\partial E_2}{\partial t}=-j_2,\label{eq:wi_elec_magn}
	\end{gather}
\end{subequations}	
where 
\begin{equation}
	j_1=\int_{-\infty}^{\infty}\int_{-\infty}^{\infty}f(x_2,v_1,v_2,t)v_1\,dv_1 dv_2,\quad j_2=\int_{-\infty}^{\infty}\int_{-\infty}^{\infty}f(x_2,v_1,v_2,t)v_2\,dv_1 dv_2.
\end{equation}	
The initial conditions are given by
	\begin{subequations}\label{eq:wi_icond}
	\begin{gather}
	    f(x_2,v_1,v_2,0)=\frac{1}{\pi\beta}e^{-v_2^2/\beta}[\delta e^{-(v_1-\omega_{0,1})^2/\beta}+(1-\delta)e^{-(v_1+\omega_{0,2})^2/\beta}],\label{eq:wi_f_init}\\
		E_1(x_2,v_1,v_2,0)=E_2(x_2,v_1,v_2,0)=0,\quad B_3(x_2,v_1,v_2,0)=b\sin(\kappa_0 x_2),\label{eq:wi_elec_,magn_icond}
	\end{gather}
\end{subequations}
which for $b=0$ is an equilibrium state composed of counter-streaming beams propagating perpendicular to the direction of inhomogeneity. Following \cite{califano1998ksw,cheng2014discontinuous}, we trigger the instability by taking $\beta=0.01$ , $b=0.001$ (the amplitude of the initial perturbation of the magnetic field). Here, $\Omega_x=[0,L_y]$ , where $L_y=2\pi/\kappa_0$, and we set $\Omega_{v}=[-1.8,1.8]^2$. We consider the following set of parameters, 
\begin{equation*}
\delta=0.5,\,\omega_{0,1}=\omega_{0,2}=0.3,\,\kappa_0=0.2. 
\end{equation*}

In Table \ref{tab:maxw_eq_error}, we run the VM system with initial condition from SW instability to $T=5$ and then back to $T=0$, we then apply the SIAC filter and compare it with the initial conditions.  We use a third order TVD-RK method  as the time integrator.  To make sure the spatial error dominates,  we take $\Delta t=O(\Delta x)$ for $\mathbb{P}^1$ and $\Delta t=O(\Delta x^{5/3})$ for $\mathbb{P}^2$, in both cases we used $\mathrm{CFL}=0.1$.   From the table we can observe $(k+1)$-th order of convergence for the DG solution before post-processing for $f$, $E_1$, $E_2$ and $B_3$. After post-processing we can see  overall the order of convergence improves to $O(h^{2k+1/2})$.        
	  	
\begin{table}[!htbp]
	\centering
	\begin{tabular}{|ccccccccc|}
		\hline
		\multicolumn{9}{|c|}{Before post-processing}                                                                                                                                                                                                                                                                                                                           \\ \hline
		\multicolumn{1}{|c|}{Mesh}                                                                 & \multicolumn{1}{c|}{Error $f$}   & \multicolumn{1}{c|}{Order} & \multicolumn{1}{c|}{Error $\mathbf{B}_3$}   & \multicolumn{1}{c|}{Order} & \multicolumn{1}{c|}{Error $\mathbf{E}_1$}   & \multicolumn{1}{c|}{Order} & \multicolumn{1}{c|}{Error $\mathbf{E}_2$}   & Order \\ \hline
		\multicolumn{9}{|c|}{$\mathbb{P}^1$}                                                                                                                                                                                                                                                                                                                                   \\ \hline
		\multicolumn{1}{|c|}{$20\times20\times20$}                                                 & \multicolumn{1}{c|}{2.20E-01}    & \multicolumn{1}{c|}{-}     & \multicolumn{1}{c|}{2.61E-06}               & \multicolumn{1}{c|}{-}     & \multicolumn{1}{c|}{2.12E-06}               & \multicolumn{1}{c|}{-}     & \multicolumn{1}{c|}{5.31E-06}               & -     \\
		\multicolumn{1}{|c|}{$40\times40\times40$}                                                 & \multicolumn{1}{c|}{7.17E-02}    & \multicolumn{1}{c|}{1.61}  & \multicolumn{1}{c|}{6.54E-07}               & \multicolumn{1}{c|}{2.00}  & \multicolumn{1}{c|}{7.06E-07}               & \multicolumn{1}{c|}{1.58}  & \multicolumn{1}{c|}{5.46E-07}               & 3.28  \\
		\multicolumn{1}{|c|}{\begin{tabular}[c]{@{}c@{}}$80\times 80\times 80$\end{tabular}}     & \multicolumn{1}{c|}{1.92E-02}    & \multicolumn{1}{c|}{1.90}  & \multicolumn{1}{c|}{1.63E-07}               & \multicolumn{1}{c|}{2.00}  & \multicolumn{1}{c|}{1.96E-07}               & \multicolumn{1}{c|}{1.85}  & \multicolumn{1}{c|}{7.05E-08}               & 2.95  \\
		\multicolumn{1}{|c|}{\begin{tabular}[c]{@{}c@{}}$160\times 160\times 160$\end{tabular}} & \multicolumn{1}{c|}{4.89E-03}    & \multicolumn{1}{c|}{1.98}  & \multicolumn{1}{c|}{4.07E-08}               & \multicolumn{1}{c|}{2.00}  & \multicolumn{1}{c|}{5.13E-08}               & \multicolumn{1}{c|}{1.94}  & \multicolumn{1}{c|}{6.40E-09}               & 3.46  \\ \hline
		\multicolumn{9}{|c|}{$\mathbb{P}^2$}                                                                                                                                                                                                                                                                                                                                   \\ \hline
		\multicolumn{1}{|c|}{$20\times20\times20$}                                                 & \multicolumn{1}{c|}{1.07E-01}    & \multicolumn{1}{c|}{-}     & \multicolumn{1}{c|}{2.56E-07}               & \multicolumn{1}{c|}{-}     & \multicolumn{1}{c|}{2.49E-07}               & \multicolumn{1}{c|}{-}     & \multicolumn{1}{c|}{1.02E-06}               &       \\
		\multicolumn{1}{|c|}{\begin{tabular}[c]{@{}c@{}}$40 \times 40 \times 40$\end{tabular}}   & \multicolumn{1}{c|}{1.64E-02}    & \multicolumn{1}{c|}{2.70}  & \multicolumn{1}{c|}{3.14E-08}               & \multicolumn{1}{c|}{3.03}  & \multicolumn{1}{c|}{2.93E-08}               & \multicolumn{1}{c|}{3.09}  & \multicolumn{1}{c|}{9.72E-08}               & 3.40  \\
		\multicolumn{1}{|c|}{\begin{tabular}[c]{@{}c@{}}$80\times 80\times 80$\end{tabular}}     & \multicolumn{1}{c|}{2.23E-03}    & \multicolumn{1}{c|}{2.88}  & \multicolumn{1}{c|}{1.63E-09}               & \multicolumn{1}{c|}{4.27}  & \multicolumn{1}{c|}{1.90E-09}               & \multicolumn{1}{c|}{3.95}  & \multicolumn{1}{c|}{6.93E-09}               & 3.81  \\
		\multicolumn{1}{|c|}{\begin{tabular}[c]{@{}c@{}}$160\times 160\times 160$\end{tabular}} & \multicolumn{1}{c|}{2.92E-04}    & \multicolumn{1}{c|}{2.93}  & \multicolumn{1}{c|}{1.41E-10}               & \multicolumn{1}{c|}{3.52}  & \multicolumn{1}{c|}{1.72E-10}               & \multicolumn{1}{c|}{3.46}  & \multicolumn{1}{c|}{2.46E-10}               & 4.81  \\ \hline
		\multicolumn{9}{|c|}{After post-processing}                                                                                                                                                                                                                                                                                                                            \\ \hline
		\multicolumn{1}{|c|}{Mesh}                                                                 & \multicolumn{1}{c|}{Error $f^*$} & \multicolumn{1}{c|}{Order} & \multicolumn{1}{c|}{Error $\mathbf{B}^*_3$} & \multicolumn{1}{c|}{Order} & \multicolumn{1}{c|}{Error $\mathbf{E}^*_1$} & \multicolumn{1}{c|}{Order} & \multicolumn{1}{c|}{Error $\mathbf{E}^*_2$} & Order \\ \hline
		\multicolumn{9}{|c|}{$\mathbb{P}^1$}                                                                                                                                                                                                                                                                                                                                   \\ \hline
		\multicolumn{1}{|c|}{$20\times20\times20$}                                                 & \multicolumn{1}{c|}{2.95E-01}    & \multicolumn{1}{c|}{-}     & \multicolumn{1}{c|}{3.17E-07}               & \multicolumn{1}{c|}{-}     & \multicolumn{1}{c|}{1.08E-07}               & \multicolumn{1}{c|}{-}     & \multicolumn{1}{c|}{5.08E-06}               & -     \\
		\multicolumn{1}{|c|}{\begin{tabular}[c]{@{}c@{}}$40\times 40\times 40$\end{tabular}}     & \multicolumn{1}{c|}{6.13E-02}    & \multicolumn{1}{c|}{2.27}  & \multicolumn{1}{c|}{7.16E-08}               & \multicolumn{1}{c|}{2.14}  & \multicolumn{1}{c|}{1.49E-08}               & \multicolumn{1}{c|}{2.87}  & \multicolumn{1}{c|}{4.38E-07}               & 3.54  \\
		\multicolumn{1}{|c|}{\begin{tabular}[c]{@{}c@{}}$80\times 80\times 80$\end{tabular}}     & \multicolumn{1}{c|}{5.87E-03}    & \multicolumn{1}{c|}{3.38}  & \multicolumn{1}{c|}{1.12E-08}               & \multicolumn{1}{c|}{2.68}  & \multicolumn{1}{c|}{3.11E-09}               & \multicolumn{1}{c|}{2.26}  & \multicolumn{1}{c|}{6.33E-08}               & 2.79  \\
		\multicolumn{1}{|c|}{\begin{tabular}[c]{@{}c@{}}$160\times 160\times 160$\end{tabular}} & \multicolumn{1}{c|}{4.19E-04}    & \multicolumn{1}{c|}{3.81}  & \multicolumn{1}{c|}{2.01E-09}               & \multicolumn{1}{c|}{2.48}  & \multicolumn{1}{c|}{7.47E-10}               & \multicolumn{1}{c|}{2.06}  & \multicolumn{1}{c|}{6.22E-09}               & 3.35  \\ \hline
		\multicolumn{9}{|c|}{$\mathbb{P}^2$}                                                                                                                                                                                                                                                                                                                                   \\ \hline
		\multicolumn{1}{|c|}{$20\times20\times20$}                                                 & \multicolumn{1}{c|}{2.89E-01}    & \multicolumn{1}{c|}{-}     & \multicolumn{1}{c|}{1.24E-08}               & \multicolumn{1}{c|}{-}     & \multicolumn{1}{c|}{9.06E-09}               & \multicolumn{1}{c|}{-}     & \multicolumn{1}{c|}{4.41E-07}               & -     \\
		\multicolumn{1}{|c|}{\begin{tabular}[c]{@{}c@{}}$40 \times 40 \times 40$\end{tabular}}   & \multicolumn{1}{c|}{4.58E-02}    & \multicolumn{1}{c|}{2.66}  & \multicolumn{1}{c|}{5.61E-10}               & \multicolumn{1}{c|}{4.46}  & \multicolumn{1}{c|}{2.97E-10}               & \multicolumn{1}{c|}{4.93}  & \multicolumn{1}{c|}{2.63E-08}               & 4.07  \\
		\multicolumn{1}{|c|}{\begin{tabular}[c]{@{}c@{}}$80\times 80\times 80$\end{tabular}}     & \multicolumn{1}{c|}{2.03E-03}    & \multicolumn{1}{c|}{4.49}  & \multicolumn{1}{c|}{2.94E-11}               & \multicolumn{1}{c|}{4.25}  & \multicolumn{1}{c|}{1.31E-11}               & \multicolumn{1}{c|}{4.50}  & \multicolumn{1}{c|}{2.57E-09}               & 3.36  \\
		\multicolumn{1}{|c|}{\begin{tabular}[c]{@{}c@{}}$160\times 160\times 160$\end{tabular}} & \multicolumn{1}{c|}{4.43E-05}    & \multicolumn{1}{c|}{5.52}  & \multicolumn{1}{c|}{1.65E-12}               & \multicolumn{1}{c|}{4.15}  & \multicolumn{1}{c|}{5.55E-13}               & \multicolumn{1}{c|}{4.56}  & \multicolumn{1}{c|}{1.12E-10}               & 4.53  \\ \hline
	\end{tabular}

\caption{$L^2$ errors for the numerical solution (Above) and the post-processed solution (Below). SW instability. }
\label{tab:maxw_eq_error}
\end{table}

In Figure \ref{fig:wi_error_comparision} we plot a cross-section of the errors of the numerical solution at $x_2\approx 0.15\pi$ before and after post-processing for $\mathbb{P}^1$ using $80\times 80\times 80$ elements. We can see that before post-processing that the errors are highly oscillatory, and after post-processing the error surface is smooth out and the error is much smaller in magnitude.  In Figure \ref{fig:fields_error_wi} we plot the errors of $E_1,\,E_2$ and $B_3$, we used the same number of elements as in Figure \ref{fig:wi_error_comparision}, We can clearly see similar conclusions.     

\begin{figure}[!htbp]
	\centering
	\includegraphics[width=\textwidth]{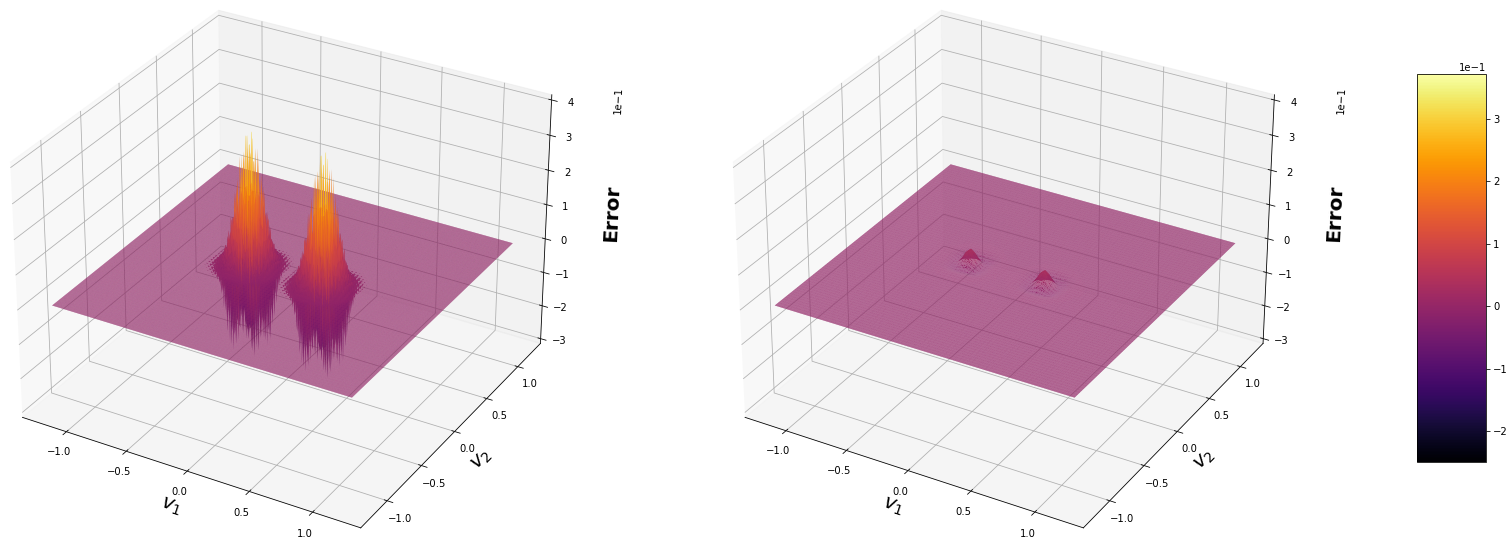}
	\caption{Cross-sectional plot of the error for $f$ at $x_2\approx 0.15\pi$,  before (on the left) and after post-processing (on the right) for $80^3$ elements and $\mathbb{P}^1.$ SW instability.}	\label{fig:wi_error_comparision}
\end{figure} 
\begin{figure}[!htbp]
	\centering
	\begin{subfigure}[b]{0.6\textwidth}
		\centering
		\includegraphics[width=\textwidth]{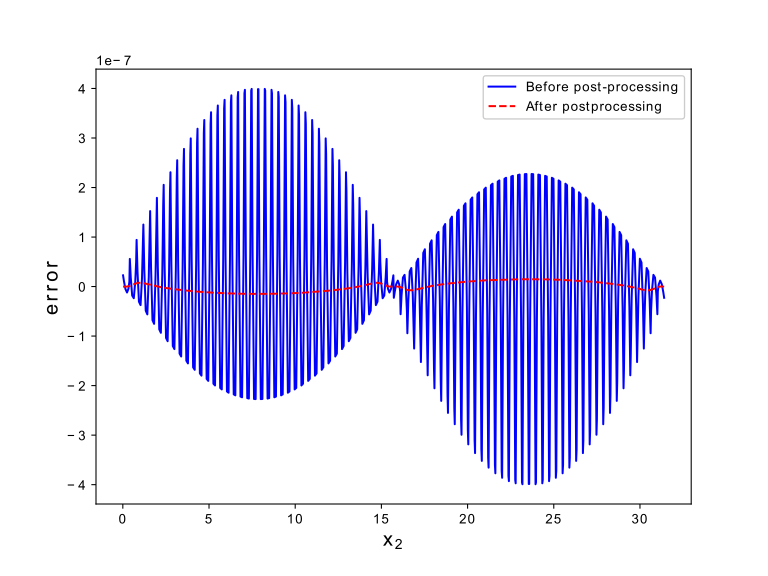}
		\caption{{\small Error for $\magn_3$}.}  
	\end{subfigure}
	\begin{subfigure}[b]{0.6\textwidth}
		\centering
		\includegraphics[width=\textwidth]{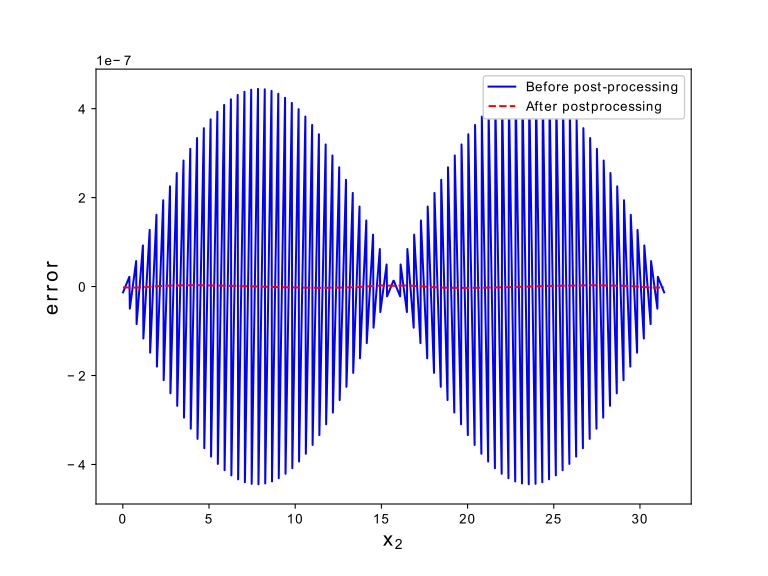}
		\caption{{\small Error for $\elec_1$}.}  
	\end{subfigure}
	\begin{subfigure}[b]{0.6\textwidth}
	\centering
	\includegraphics[width=\textwidth]{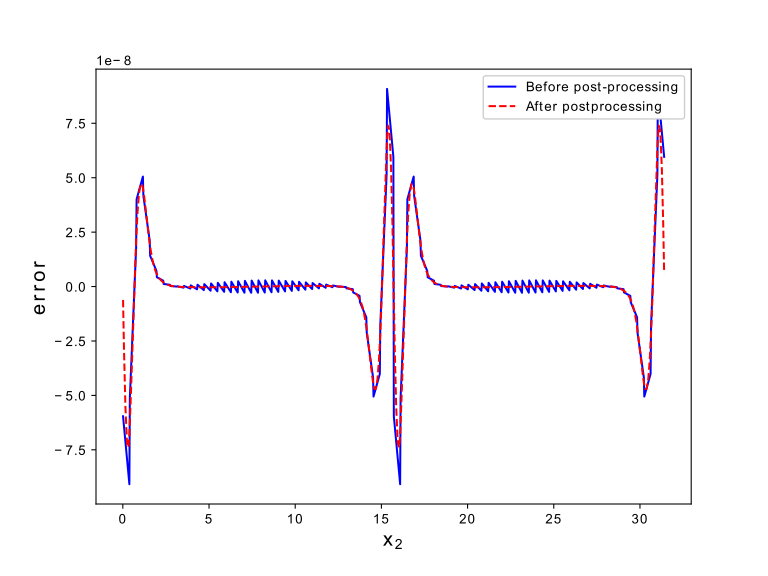}
	\caption{{\small Error for $\elec_2$}.}  
\end{subfigure}
	\caption{\small Errors before (solid line) and after post-processing (dashed line) for the different fields using mesh size of $80\times 80\times 80$ and $\mathbb{P}^1$. $T=10$. SW instability.} 
	\label{fig:fields_error_wi} 
\end{figure}

\section{Concluding Remarks}
\label{sec:conclude}

In this paper, we proved theoretically and demonstrated computationally the effectiveness of the SIAC filter to the DG solutions of the nonlinear VM system. We proved the superconvergence of order $(2k+\frac{1}{2})$ in the negative norm of the DG solutions. This is nontrivial for nonlinear systems, and is achieved by identifying a suitable dual problem. The numerical experiments verify the performance of the filter in reducing spurious oscillations in the numerical errors. For low order $k$, the resolution of the numerical solution is greatly enhanced, which is highly desirable for long time kinetic simulations.
In the future, we plan to prove superconvergence for the divided difference of the numerical solution to fully justify the enhanced resolution of the post-processed solution.

\appendix
\begin{appendices}

\section{Proof of Lemma \ref{lem:regularity_est}}
 By using equation \eqref{eq:gen_du_phi}, the divergence free properties of $\mathbf{A_1}, \mathbf{A_2}$ and the boundary conditions, we have the following 
 
\begin{align*}
	\frac{1}{2}\frac{d}{dt}\norma{\varphi}^2&  =-\int_{\Omega}(\mathbf{A_3}\cdot\F)\varphi\,d\vecx d\vecv
								 \leq   C(\norma{\varphi}^2+\norma{\F}^2),
\end{align*}
where $C$ depends on $\norma{\mathbf{A_3}}_{L^\infty((0,T); L^{\infty}(\Omega))}.$
 On the other hand using equations \eqref{eq:gen_du_F} and \eqref{eq:gen_du_D}, Gauss theorem on the physical space integrals and integration by parts on the velocity space variables,   
\begin{align*}
	\frac{1}{2}\frac{d}{dt}\norma{\F}^2+\frac{1}{2}\frac{d}{dt}\norma{\D}^2&=\int_{\Omega_x}(\rot\D\cdot \F-\rot\F\cdot \D)\,d\vecx-\int_{\Omega_v}\varphi \nabla_{\vecv}g \cdot \F\,d\vecx d\vecv+\int_{\Omega_v}\varphi(\vecv\times\nabla_{\vecv}g)\D\,d\vecx d\vecv\\
																		   &=-\int_{\Omega}\varphi \nabla_{\vecv}g \cdot \F\,d\vecx d\vecv+\int_{\Omega}\varphi(\vecv\times\nabla_{\vecv}g)\D\,d\vecx d\vecv\\																		   &\leq C\left(\norma{\F}^2+\norma{\D}^2+\norma{\varphi}^2\right),
\end{align*}
where $C$ depends on $\norma{{g}}_{L^\infty((0,T); W^{1,\infty}(\Omega))}.$

Now we add the tow inequalities above, to obtain
\begin{equation}
	\frac{1}{2}\frac{d}{dt}\norma{\varphi}^2+\frac{1}{2}\frac{d}{dt}\norma{\F}^2+\frac{1}{2}\frac{d}{dt}\norma{\D}^2\leq C\left(\norma{\F}^2+\norma{\D}^2+\norma{\varphi}^2\right), 
\end{equation}	
where $C$ depends on $\norma{\mathbf{A_3}}_{L^\infty((0,T); L^{\infty}(\Omega))}$ and $\norma{{g}}_{L^\infty((0,T); W^{1,\infty}(\Omega))}.$
An application of Gronwall's inequality allow us to conclude. Now since we are considering the full Sobolev norm, we still need to estimate the $L^2$ norms of the higher order derivatives $\partial_{\vecx}^{\beta}\partial_{\vecv}^{\gamma}$, to do so we apply $\partial_{\vecx}^{\beta}\partial_{\vecv}^{\gamma}$ to the system \eqref{eq:reg_equation} and then we repeat the same steps that we took above.
  
\section{Proof of Lemma \ref{lem:proj_estimate} }
		By the definition of $\Pi^k$, 
		\begin{align*}
			(f_0-\Pi^kf_0,\varphi(0))_{\Omega}&=(f_0-\Pi^kf_0,\varphi(0)-\Pi^k\varphi(0))_{\Omega}\\
			&\leq \norma{f_0-\Pi^kf_0}\norma{\varphi(0)-\Pi^k\varphi(0)}\\
			&\leq Ch^{k+1}\norma{f_0}_{k+1,\Omega}h^{k+1}\norma{\varphi(0)}_{k+1,\Omega}.
		\end{align*}
	The last line was an application of the first part of Lemma \ref{lem:approximation_lemma}. By the same lines we obtain analogous results for the $\elec$ and $\magn$ parts. The conclusion follows by grouping them all together and an application of Cauchy-Schwarz inequality. 
	
\section{Proof of Lemma \ref{lem:residual} }

		Due to the definition of the projection operators, $((f_h)_t,\varphi-\chi)_{\Omega}=0$, $((\elec_h)_t,\F-\xi)_{\Omega_x}=0,$ and $((\magn_h)_t,\D-\eta)_{\Omega_x}=0,$ and
$
			l_h(\curre_h;\F-\xi)=-(\curre_h,\F-\xi)_{\Omega_x}=0,
$
we have
		\begin{equation*}
			\Theta_{\rm N}=\int_{0}^{T}-a_h(f_h,\elec_h,\magn_h;\zeta_h^{\varphi})-b_h(\elec_h,\magn_h;\zeta_h^{\F},\zeta_h^{\D})\,d\tau.
		\end{equation*}
From its definition,
		\begin{align*}
			b_h(\elec_h,\magn_h;\zeta_h^{\F},\zeta_h^{\D})&=\int_{\Thx}\elec_h\cdot\rot\zeta_h^{\D}\,d\vecx-\int_{\Thx}\magn_h\cdot\rot\zeta_h^{\F}\,d\vecx\\
			&+\int_{\edgesx}\widetilde{\elec_h}\cdot[\zeta_h^{\D}]_{\tau}\,ds_{\vecx}-\int_{\edgesx}\widetilde{\magn_h}\cdot[\zeta_h^{\F}]_{\tau}\,ds_{\vecx}\\
			&=-\int_{\Thx}e_h^{\elec}\cdot\rot\zeta_h^{\D}\,d\vecx+\int_{\Thx}e_h^{\magn}\cdot\rot\zeta_h^{\F}\,d\vecx\\
			&-\int_{\edgesx}\widetilde{e_h^{\elec}}\cdot[\zeta_h^{\D}]_{\tau}\,ds_{\vecx}+\int_{\edgesx}\widetilde{e_h^{\magn}}\cdot[\zeta_h^{\F}]_{\tau}\,ds_{\vecx}\\
			&+\int_{\Thx}(\rot\elec)\cdot\zeta_h^{\D}\,d\vecx-\int_{\Thx}(\rot\magn)\cdot\zeta_h^{\F}\,d\vecx.
		\end{align*}
By Lemma \ref{lem:approximation_lemma},
		$$
			\left|\int_{\Thx}(e_h^{\elec})\cdot\rot\zeta_h^{\D}\,d\vecx\right| \leq C h^k\norma{e_h^{\elec}}_{0,\Omega_x}\norma{\D}_{k+1,\Omega_x},$$
		$$
			\left|\int_{\Thx}e_h^{\magn}\cdot\rot\zeta_h^{\F}\,d\vecx\right|\leq C h^k\norma{e_h^{\magn}}_{0,\Omega_x}\norma{\F}_{k+1,\Omega_x},
$$
$$
			 \left|\int_{\edgesx}(\widetilde{e^{\elec}_h})\cdot[\zeta^{\D}_h]_{\tau}-(\widetilde{e^{\magn}_h})\cdot[\zeta^{\F}_h]_{\tau}\,ds_{\vecx} \right|\leq C h^{k+1/2}\left(\norma{\D}_{k+1,\Omega_x}+\norma{\F}_{k+1,\Omega_x}\right) \left(\norma{e^{\elec}_h}_{0,\edgesx}+\norma{e^{\magn}_h}_{0,\edgesx}\right).
			$$
Now notice that  
		\begin{align*}
			\norma{e^{\elec}_h}_{0,\edgesx}&\leq \norma{\epsilon^{\elec}_h}_{0,\edgesx}+\norma{\zeta^{\elec}_h}_{0,\edgesx}\\
			&\leq C[ h^{-1/2}\norma{\epsilon^{\elec}_h}_{0,\Omega_x}+h^{k+1/2}   ]\\
			&\leq Ch^{-1/2}[\norma{e^{\elec}_h}_{0,\Omega_x}+h^{k+1} ].
		\end{align*}
		Analogously 
		\begin{equation*}
			\norma{e^{\magn}_h}_{0,\edgesx} \leq C h^{-1/2}[\norma{e_h^{\magn}}_{0,\Omega_x}+h^{k+1} ].
		\end{equation*}
	Therefore,
		\begin{align*}
			\left|\int_{\edgesx}(\widetilde{e^{\elec}_h})\cdot[\zeta^{\D}_h]_{\tau}-(\widetilde{e^{\magn}_h})\cdot[\zeta^{\F}_h]_{\tau}\,ds_{\vecx} \right| 
			\leq Ch^{k}\left(\norma{\D}_{k+1,\Omega_x}+\norma{\F}_{k+1,\Omega_x}\right)\left(\norma{e_h^{\elec}}_{0,\Omega_x}+\norma{e_h^{\magn}}_{0,\Omega_x}+h^{k+1}  \right).
 		\end{align*}
		Now by the properties of the orthogonal projection $\mathbf{\Pi}_x^k$
		\begin{align*}
			\left|\int_{\Thx}(\rot\elec)\cdot\zeta_h^{\D}\,d\vecx\right|&=\left|\int_{\Thx}(\rot\elec-\mathbf{\Pi}_x^k(\rot\elec))\cdot\zeta_h^{\D}\,d\vecx\right| 
			\leq C  h^{2k+2}\norma{\D}_{k+1,\Omega_x},
		\end{align*}
		where $C$ depends on $\norma{\elec}_{k+2,\Omega_x}.$
By an analogous procedure
		\begin{equation*}
			\left|\int_{\Thx}(\rot\magn)\cdot\zeta_h^{\F}\,d\vecx\right|\leq C  h^{2k+2}\norma{\F}_{k+1,\Omega_x},
		\end{equation*}
where $C$ depends on		$\norma{\magn}_{k+2,\Omega_x}.$
Putting all the above calculations together, we arrive at, 
		\begin{align}
		\label{eq:residual_b_estimate}
			&|b_h(\elec_h,\magn_h;\zeta_h^F,\zeta_h^D)| 
			\leq Ch^{k}\left(\norma{\D}_{k+1,\Omega_x}+\norma{\F}_{k+1,\Omega_x}\right)\left(\norma{e_h^{\elec}}_{0,\Omega_x}+\norma{e_h^{\magn}}_{0,\Omega_x}+h^{k+1}\right),
		\end{align}
				where $C$ depends on $\norma{\elec}_{k+2,\Omega_x}, \norma{\magn}_{k+2,\Omega_x}.$

		We will deal now with the term $a_h$, which is
		\begin{equation}
			a_h(f_h,\elec_h,\magn_h,\zeta_h^{\varphi})=a_{h,1}(f_h,\zeta_h^{\varphi})+a_{h,2}(f_h,\elec_h,\magn_h;\zeta_h^{\varphi}).\label{eq:b_h_estimate}
		\end{equation}
	First, we have
		\begin{equation*}
			a_{h,1}(f_h;\zeta_h^{\varphi})=\int_{\Th}e_h^f\vecv\cdot\nabla_{\vecx}\zeta_h^{\varphi}\,d\vecx d\vecv+\int_{\Thv}\int_{\edgesx}\widetilde{e_h^f\vecv}[\einspace{\varphi}]_x\,ds_{\vecx}d\vecv -\int_{\Th}\nabla_{\vecx}f\cdot\vecv\einspace{\varphi}\,d\vecx d\vecv
		\end{equation*}
		The first term can be easily bounded, by using Lemma \ref{lem:approximation_lemma}. 
				\begin{align*}
			\left|\int_{\Th}e_h^f\vecv\cdot\nabla_{\vecx}\zeta_h^{\varphi}\,d\vecx d\vecv\right|& \leq C  h^{k}\norma{e_h^f}_{0,\Omega}\norma{\varphi}_{k+1,\Omega}.
		\end{align*}
		Similarly,
				\begin{align*}
			\left|\int_{\Thv}\int_{\edgesx}\widetilde{e_h^f\vecv}[\einspace{\varphi}]_x\,ds_{\vecx}d\vecv\right|& 			\leq C \norma{e_h^f}_{\Thv\times\edgesx}\norma{\einspace{\varphi}}_{\Thv\times\edgesx}\\
			&\leq C h^{k+1/2}\norma{e_h^f}_{\Thv\times\edgesx}\norma{\varphi}_{k+1,\Omega}\\
			&\leq Ch^{k+1/2}(\norma{\varepsilon_h^f}_{\Thv\times\edgesx}+\norma{\zeta_h^f}_{\Thv\times\edgesx})\norma{\varphi}_{k+1,\Omega}\\
			&\leq Ch^{k}(\norma{e_h^f}_{0,\Omega}+h^{k+1} )\norma{\varphi}_{k+1,\Omega}.
					\end{align*}

For the last term notice that by the properties of the projection $\Pi^k$ and the fact that $\Pi^k(\nabla_{\vecx}f\cdot\vecv)$ is a polynomial of degree $k$,  
		\begin{align*}
			\int_{\Th}\nabla_{\vecx}f\cdot\vecv\einspace{\varphi}\,d\vecx d\vecv&=\int_{\Th}(\nabla_{\vecx}f\cdot\vecv-\Pi^k(\nabla_{\vecx}f\cdot\vecv))\einspace{\varphi}\,d\vecx d\vecv\\
						&\leq C h^{2k+2}\norma{\varphi}_{k+1,\Omega},
		\end{align*}
		where $C$ depends on $\norma{f}_{k+2,\Omega}.$ 
		By using all the calculations above, we can conclude that 
		\begin{equation}
			|a_{h,1}(f_h;\einspace{\varphi})|\leq Ch^{k}\norma{e_h^f}_{0,\Omega}\norma{\varphi}_{k+1,\Omega} +Ch^{2k+1} \norma{\varphi}_{k+1,\Omega},
			\label{eq:residual_estimate_a_h_1}
		\end{equation}
				where $C$ depends on $\norma{f}_{k+2,\Omega}.$ 
		To conclude our proof, we only need to bound $a_{h,2}$, this time we will do things a little bit different, notice that
		\begin{equation*}
			a_{h,2}(f_h,\elec_h,\magn_h,\zeta_h^{\varphi})=a_{h,2}(f,\elec_h,\magn_h,\zeta_h^{\varphi})-a_{h,2}(e_h^f,\elec_h,\magn_h,\zeta_h^{\varphi}).
		\end{equation*}
		We will get started by noting that $\widetilde{f(\elec_h+\vecv\times\magn_h)}=f\{\elec_h+\vecv\times\magn_h\}_{v}=f\left(\elec_h+\vecv\times\magn_h\right)$, then
		\begin{align*}
			a_{h,2}(f,\elec_h,\magn_h,\zeta_h^{\varphi})&=-\int_{\Th}f(\elec_h+\vecv\times\magn_h)\cdot\nabla_{\vecv}\einspace{\varphi}\,d\vecx d\vecv+\int_{\Thx}\int_{\edgesv}f(\elec_h+\vecv\times\magn_h)\cdot[\einspace{\varphi}]_{v}\,d\vecx d\vecv\\
			&=\int_{\Th}f(e^{\elec}_h+\vecv\times e^{\magn}_h)\cdot\nabla_{\vecv}\einspace{\varphi}\,d\vecx d\vecv-\int_{\Thx}\int_{\edgesv}f(e^{\elec}_h+\vecv\times e^{\magn}_h)\cdot[\einspace{\varphi}]_{v}\,d\vecx d\vecv\\
			&+\int_{\Th}\nabla_{\vecv}f\cdot(\elec+\vecv\times\magn)\einspace{\varphi}\,d\vecx d\vecv.
		\end{align*}
		We obtained the last inequality by adding and subtracting $\int_{\Th}f(\elec+\vecv\times\magn)\cdot\nabla_{\vecv}\einspace{\varphi}\,d\vecx d\vecv$ , integration by parts, and the fact that $\nabla_{\vecv}\cdot(\elec+\vecv\times\magn)=0.$ in this way 
		\begin{align*}
			\left|\int_{\Th}f(e^{\elec}_h+\vecv\times e^{\magn}_h)\cdot\nabla_{\vecv}\einspace{\varphi}\,d\vecx d\vecv\right|& 			 \leq C  h^{k}(\norma{e^{\elec}_h}_{0,\Omega_x}+\norma{e^{\magn}_h}_{0,\Omega_x})\norma{\varphi}_{k+1,\Omega},
		\end{align*}
		and
		\begin{align*}
			\left|\int_{\Thx}\int_{\edgesv}f(e^{\elec}_h+\vecv\times e^{\magn}_h)\cdot[\einspace{\varphi}]_{v}\,d\vecv d\vecx \right|& \leq C h^{k+1/2}(\norma{e_{h}^\elec}_{0,\Omega_x}+\norma{e^{\magn}_h}_{0,\Omega_x})\norma{\varphi}_{k+1,\Omega}.
		\end{align*}
		 Last but not least by the same arguments as previous estimates
		\begin{align*}
			\int_{\Th}\nabla_{\vecv}f\cdot(\elec+\vecv\times\magn)\einspace{\varphi}\,d\vecx d\vecv&=\int_{\Th}(\nabla_{\vecv}f\cdot(\elec+\vecv\times\magn)-\Pi^k \nabla_{\vecv}f\cdot(\elec+\vecv\times\magn))\einspace{\varphi}\,d\vecx d\vecv\\
			&\le C h^{2k+2}\norma{\varphi}_{k+1,\Omega},
		\end{align*}
			where $C$ depends on $\norma{f}_{k+2,\Omega}, \norma{\elec}_{k+1,\Omega_x}, \norma{\magn}_{k+1,\Omega_x}.$
		We can conclude that 
		\begin{equation}
			|a_{h,2}(f,\elec_h,\magn_h;\einspace{\varphi})|\leq C h^{k}(\norma{e_{h}^\elec}_{0,\Omega_x}+\norma{e^{\magn}_h}_{0,\Omega_x})\norma{\varphi}_{k+1,\Omega}+Ch^{2k+2}\norma{\varphi}_{k+1,\Omega}.
			\label{eq:residual_a_h_2_estimate_p2}
		\end{equation}
		Finally we just need to estimate  
		\begin{equation*}
			a_{h,2}(e_h^f,\elec_h,\magn_h;\einspace{\varphi})=-\int_{\Th}e_h^f\left(\elec_h+\vecv\times\magn_h\right)\cdot\nabla_{\vecv}\einspace{\varphi}\,d\vecx d\vecv+\int_{\Thx}\int_{\edgesv}\widetilde{e_h^f(\elec_h+\vecv\times\magn_h)}\cdot[\einspace{\varphi}]_{v}\,ds_{v}d\vecx
		\end{equation*} 
We have
		\begin{align*}
			&\left|\int_{\Th}e_h^f\left(\elec_h+\vecv\times\magn_h\right)\cdot\nabla_{\vecv}\einspace{\varphi}\,d\vecx d\vecv\right|			\leq C  h^k\norma{e_h^f}_{0,\Omega}(\norma{\elec_h}_{0,\infty,\Omega_x}+\norma{\magn_h}_{0,\infty,\Omega_x})\norma{\varphi}_{k+1,\Omega}\\
			&\leq C  h^{k}\norma{e_h^f}_{0,\Omega}(\norma{\epsilon^{\elec}_h}_{0,\infty,\Omega_x}+\norma{\epsilon^{\magn}_h}_{0,\infty,\Omega_x}+\norma{\Pi^k_x\elec}_{0,\infty,\Omega_x}+\norma{\Pi^k_x\magn}_{0,\infty,\Omega_x})\norma{\varphi}_{k+1,\Omega}\\
			&\leq C  h^{k-d_x/2}\norma{e_h^f}_{0,\Omega}(\norma{\epsilon^{\elec}_h}_{0,\Omega_x}+\norma{\epsilon^{\magn}_h}_{0,\Omega_x})\norma{\varphi}_{k+1,\Omega}+C h^{k}\norma{e_h^f}_{0,\Omega}(\norma{\elec}_{0,\infty,\Omega_x}+\norma{\magn}_{0,\infty,\Omega_x})\norma{\varphi}_{k+1,\Omega}\\
			&\leq C  h^{k-d_x/2}\norma{e_h^f}_{0,\Omega}(\norma{e^{\elec}_h}_{0,\Omega_x}+\norma{e^{\magn}_h}_{0,\Omega_x}+h^{k+1})\norma{\varphi}_{k+1,\Omega}+C  h^{k}\norma{e_h^f}_{0,\Omega}\norma{\varphi}_{k+1,\Omega}\\
			&\leq C h^{k}\norma{e_h^f}_{0,\Omega}(h^{-d_x/2}\norma{e^{\elec}_h}_{0,\Omega_x}+h^{-d_x/2}\norma{e^{\magn}_h}_{0,\Omega_x}+1)\norma{\varphi}_{k+1,\Omega},
		\end{align*}
	    Here we used the fact that  whenever $d_x=1,\,2,\,3$, $k+1-d_x/2>0$, Lemma \ref{lem:inverse_inequalities} and the fact that 
	    $\Pi_x$ is bounded in any $L^p$-norm $(1\leq p\leq \infty)$ \cite{crouzeix1987stability,ayuso2009discontinuous},
	    \begin{equation*}
	    	\norma{\Pi_x\elec}_{0,\infty,\Omega_x}\leq C \norma{\elec}_{0,\infty,\Omega_x}, 	    	\norma{\Pi_x\magn}_{0,\infty,\Omega_x}\leq C \norma{\magn}_{0,\infty,\Omega_x}.
	    \end{equation*}
		Finally  		\begin{align*}
			&\int_{\Thx}\int_{\edgesv}\widetilde{e_h^f(\elec_h+\vecv\times\magn_h)}\cdot[\einspace{\varphi}]_{v}\,ds_{v}d\vecx\\
			&\leq C h^{k+1/2}(\norma{\elec_h}_{0,\infty,\Omega_x}+\norma{\magn_h}_{0,\infty,\Omega_x}) \norma{e_h^f}_{0,\Thx\times\edgesv}\norma{\varphi}_{k+1,\Omega}\\
			&\leq C h^{k+1/2}(\norma{\elec_h}_{0,\infty,\Omega_x}+\norma{\magn_h}_{0,\infty,\Omega_x}) h^{-1/2}(\norma{e_h^f}_{0,\Th}+h^{k+1}\norma{f}_{k+1,\Omega}) \norma{\varphi}_{k+1,\Omega}\\	
			&\leq C h^{k}(\norma{\elec_h}_{0,\infty,\Omega_x}+\norma{\magn_h}_{0,\infty,\Omega_x}) (\norma{e_h^f}_{0,\Th}+h^{k+1}\norma{f}_{k+1,\Omega}) \norma{\varphi}_{k+1,\Omega}\\	
			&\leq C h^{k}(\norma{e_h^f}_{0,\Th}+h^{k+1})(h^{-d_x/2}\norma{e^{\elec}_h}_{0,\Omega_x}+h^{-d_x/2}\norma{e^{\magn}_h}_{0,\Omega_x}+1)\norma{\varphi}_{k+1,\Omega}.
		\end{align*} 
		In this way we conclude that 
		\begin{align}
		\label{eq:residual_a_h_2_estimate_p2c}
			 |a_{h,2}(e_h,\elec_h,\magn_h;\einspace{\varphi})|
			  \leq C h^{k}(\norma{e_h^f}_{0,\Th}+h^{k+1})(h^{-d_x/2}\norma{e^{\elec}_h}_{0,\Omega_x}+h^{-d_x/2}\norma{e^{\magn}_h}_{0,\Omega_x}+1)\norma{\varphi}_{k+1,\Omega}	\end{align}
		Then by putting together \eqref{eq:residual_b_estimate}, \eqref{eq:residual_estimate_a_h_1}, \eqref{eq:residual_a_h_2_estimate_p2}, \eqref{eq:residual_a_h_2_estimate_p2c}, and using Theorem \ref{thm:main_approx_result}, we have
		\begin{align*}
		&|a_h(f_h,\elec_h,\magn_h;\zeta_h^{\varphi})+b_h(\elec_h,\magn_h;\zeta_h^{\F},\zeta_h^{\D})|\\
		 & \leq  Ch^{k}\left(\norma{\D}_{k+1,\Omega_x}+\norma{\F}_{k+1,\Omega_x}+\norma{\varphi}_{k+1,\Omega}\right)\left(\norma{e_h^{\elec}}_{0,\Omega_x}+\norma{e_h^{\magn}}_{0,\Omega_x}+h^{k+1}  \right)	\\
		 &+ C h^{k}(\norma{e_h^f}_{0,\Th}+h^{k+1})(h^{-d_x/2}\norma{e^{\elec}_h}_{0,\Omega_x}+h^{-d_x/2}\norma{e^{\magn}_h}_{0,\Omega_x}+1)\norma{\varphi}_{k+1,\Omega}\\
		 & \leq  Ch^{2k+1/2}\left(\norma{\D}_{k+1,\Omega_x}+\norma{\F}_{k+1,\Omega_x}+\norma{\varphi}_{k+1,\Omega}\right).	
		\end{align*}
	where we have used $k+1/2-d_x/2>0.$	
	An application of Cauchy-Schwarz inequality concludes the proof.
		
\section{Proof of Lemma \ref{lem:consistency} }
	The terms inside the integral of $\Theta_{\rm C}$ can be split in $I+II$, where
		\begin{align*}
			&I=(f_h,\varphi_t)_{\Omega}-a_h(f_h,\elec_h,\magn_h;\varphi)+l_h(\curre_h,\F)\\
			&II=(\elec_h,\F_t)_{\Omega_x}+(\magn_h,\D_t)_{\Omega_x}-b_h(\elec_h,\magn_h;\F,\D)\mathcal+\mathcal{F}(f,\elec,\magn;\varphi)
		\end{align*}
		since $\varphi$ is a smooth function, $[\varphi]_{x}=0$ and $[\varphi]_v=0$, in this way, by using \eqref{eq:dual_distribution}, and the definition of $l_h$, we conclude that, 
		\begin{align*}
			I&=(f_h,-\vecv\cdot\nabla_{\mathbf{x}}\varphi-(\elec+\vecv\times\magn)\cdot\nabla_{\vecv}\varphi+\vecv\cdot\F)_{\Omega}-a_h(f_h,\elec_h,\magn_h;\varphi)+l_h(\curre_h;\F)\\
			&=-\int_{\Th}f_h\vecv\cdot\nabla_{\vecx}\varphi\,d\vecx d\vecv-\int_{\Omega}f_h(\elec+\vecv\times\magn)\cdot\nabla_{\vecv}\varphi\,d\vecx d\vecv-l_h(\curre_h;\F)\\
			&+\int_{\Th}f_h\vecv\cdot\nabla_{\vecx}\varphi\,d\vecx+\int_{\Omega}f_h(\elec_h+\vecv\times\magn_h)\cdot\nabla_{\vecv}\varphi\,d\vecx d\vecv+l_h(\curre_h;\F)\\
			&=-\int_{\Omega}f_h(e_h^{\elec}+\vecv\times e_h^{\magn})\cdot\nabla_{\vecv}\varphi\,d\vecx\,d\vecv.
		\end{align*}
		On the other hand, by using \eqref{eq:dual_elec} and \eqref{eq:dual_magn}, since $\F$ and $\D$ are smooth functions $[\F]_{\tau}=[\D]_{\tau}=0$,we have that
		\begin{align*}
			II&=(\elec_h,\rot \D)_{\Thx}-(\magn_h,\rot \F)_{\Thx}-b_h(\elec_h,\magn_h;\F,\D)+\mathcal{F}(f,\elec,\magn;\varphi)\\
			  &-\int_{\Omega}f\elec_h\cdot\nabla_v\varphi\,d\vecx d\vecv+\int_{\Omega}f\magn_h\cdot(\vecv\times\nabla_v\varphi)\,d\vecx d\vecv\\
			&=(\elec_h,\rot \D)_{\Thx}-(\magn_h,\rot \F)_{\Thx}-(\elec_h,\rot \D)_{\Thx}+(\magn_h,\rot \F)_{\Thx}\\
			&-\int_{\Omega}f(\elec_h+\vecv\times\magn_h)\cdot\nabla_{\vecv}\varphi\,d\vecx d\vecv+\int_{\Omega}f(\elec+\vecv\times\magn)\cdot\nabla_{\vecv}\varphi\,d\vecx d\vecv\\
			&=\int_{\Omega}f(e_h^{\elec}+\vecv\times e_h^{\magn})\cdot\nabla_{\vecv}\varphi\,d\vecx d\vecv.
		\end{align*}
		We obtain
	\begin{align*}
			I+II&=\int_{\Omega}e_h^f(e_h^{\elec}+\vecv\times e_h^{\magn})\cdot\nabla_{\vecv}\varphi\,d\vecx d\vecv\\
		 &\leq C \norma{e_h^f}_{\Omega}(\norma{e_h^{\elec}}_{\Omega_x}+\norma{e_h^{\magn}}_{\Omega_x})\norma{\nabla_{\mathbf{v}}\varphi}_{\infty,\Omega}\\
		&\leq C \norma{e_h^f}_{\Omega}(\norma{e_h^{\elec}}_{\Omega_x}+\norma{e_h^{\magn}}_{\Omega_x})\norma{\varphi}_{k+1,\Omega}
	\end{align*}
where we used the Sobolev inequality \cite{brenner2008mathematical}, $\norma{\nabla_{\mathbf{v}}\varphi}_{\infty,\Omega}\leq C \norma{\varphi}_{k+1,\Omega},$ which requires $k>(d_x+d_v)/2$. Using Theorem \ref{thm:main_approx_result}, we conclude the proof.
\end{appendices}

	\bibliographystyle{abbrv}
	\bibliography{XBib,refer,ref_cheng_plasma_2,ref_cheng,ref_cheng_2}
\end{document}